\documentclass[a4paper,11pt, twoside]{article}
\usepackage{a4} 
\usepackage{geometry} 
\usepackage{amsmath}
\usepackage{amssymb}
\usepackage{amsthm}
\usepackage{graphicx}
\usepackage{caption,subcaption}
\usepackage{multirow}
\usepackage{xstring}
\usepackage[utf8]{inputenc}
\usepackage{amstext,amsbsy,amsopn,eucal,enumerate}
\usepackage{tikz}
\usepackage{pgfplots}
\usepackage{tabularx}
\usepackage[pagewise]{lineno}%\linenumbers
\usetikzlibrary{trees}

\newcommand{\dd}{\mathrm{d}}

\newcommand{\expn}{\operatorname{e}}

\newcommand{\kernel}{\operatorname{ker}}

\newcommand{\im}{\operatorname{im}}
\newcommand{\diag}{\operatorname{diag}}

\newcommand{\beq}{\begin{equation}}
\newcommand{\eeq}{\end{equation}}
\newcommand {\mat}      [1] {\left[\begin{array}{#1}}
\newcommand {\rix}          {\end{array}\right]}
\newcommand {\smat}      [1] {\left[\begin{smallmatrix}{#1}}
\newcommand {\srix}          {\end{smallmatrix}\right]}
\newcommand {\s}      [1] {\begin{smallmatrix}{#1}}
\newcommand {\se}          {\end{smallmatrix}}
\newcommand{\trace}{\operatorname{tr}}
\newcommand{\parent}{\operatorname{parent}}
\newcommand{\Vol}{\operatorname{Vol}}
\newcommand{\leaves}{\operatorname{leaves}}

\newtheorem{defn}{Definition}[section]
\newtheorem{remark}[defn]{Remark}
\newtheorem{example}[defn]{Example}
\newtheorem{lem}[defn]{Lemma}
\newtheorem{prop}[defn]{Proposition} %Proposition entspricht Satz
\newtheorem{kor}[defn]{Corollary}
\newtheorem{thm}[defn]{Theorem}

\title{Tree-based solution representations for quadratic bilinear systems and their consequences in model order reduction}

\author{Martin Redmann\thanks{University of Rostock, Institute of Mathematics, Ulmenstraße 69, 18057 Rostock, Germany, Email: {\tt
martin.redmann@uni-rostock.de}.}
}

\begin{document}

\maketitle

\begin{abstract}
We investigate quadratic bilinear systems by developing novel tree-based representations of their solutions. The proposed framework decomposes the solution into a sequence of coupled bilinear subsystems whose components admit explicit expansions indexed by full binary trees. These representations yield sufficient conditions for the existence of global solutions and lead to new output bounds in terms of reachability Gramians. Motivated by these estimates, we introduce time-limited and infinite-horizon reachability and observability Gramians, establish sufficient conditions for their existence, and characterize them through nonlinear matrix equations. The associated Gramians are employed to identify dominant state-spaces and to derive exact reduced-order models obtained by removing Gramian kernels. Building on these results, we develop a balanced truncation method for quadratic bilinear systems and prove an error bound for the reduced-order approximation. The proposed framework provides a unified connection between tree-based solution representations, nonlinear Gramian theory, and balanced truncation for quadratic bilinear systems, closing several theoretical gaps in the analysis of Gramian-based model reduction for this class of systems.
\end{abstract}

\textbf{Keywords:} nonlinear systems $\cdot$ tree-based representations $\cdot$ Gramians $\cdot$ model order reduction $\cdot$ error bounds

\noindent\textbf{MSC classification:}  34A05 $\cdot$ 65L05 $\cdot$ 68Q25 $\cdot$ 93A15 $\cdot$ 93B20 $\cdot$ 93C10

\pagestyle{myheadings}
\thispagestyle{plain}
\markboth{M. Redmann}{Tree-based solution representations for quadratic bilinear systems}

%%%%%%%%%%%%%%%%%%%%%%

\section{Introduction}

Large-scale dynamical systems arise naturally in many areas of science and applied mathematics. While these models often provide an accurate description of the underlying dynamics, their high dimensionality renders numerical simulation and analysis computationally demanding. Model order reduction (MOR) seeks to overcome this difficulty by replacing the original model with a surrogate of significantly smaller dimension that accurately reproduces the essential features of the original system. Most projection-based MOR techniques achieve this by identifying low-dimensional subspaces that capture the dominant dynamics and projecting the governing equations onto these subspaces.

Over the past decades, a variety of MOR methodologies has been developed. Proper orthogonal decomposition \cite{pod} constructs reduced spaces from representative solution trajectories and has become one of the standard tools in data-driven model reduction. Projection methods based on interpolation or $\mathcal{H}_2$-optimality, such as the iterative rational Krylov algorithm \cite{irka}, provide another successful class of reduction techniques. Among system-theoretic approaches, balanced truncation \cite{moo1981} occupies a prominent position due to its robustness and strong mathematical foundation. By employing reachability and observability Gramians, balanced truncation identifies state variables that are simultaneously difficult to control and observe. Moreover, for linear deterministic systems, it preserves stability and is accompanied by rigorous a priori error bounds \cite{BT_bound_enns,BT_bound_glover}.

Extending these favorable properties to nonlinear systems has attracted considerable attention in recent years. Data-driven MOR techniques for nonlinear systems include, for instance, \cite{gosea_antoulas,pod_kramer,qian_data}, whereas interpolation- and optimization-based methods have been investigated in \cite{breiten_benner2,nonlinear_irka}. Several balancing-related approaches have also been proposed for nonlinear systems \cite{nonlinear_bt,Scherpen,verriest_nonlinear_bt}. However, unlike the classical linear theory, these methods generally lack rigorous error estimates. Balancing related MOR techniques for nonlinear system with error bounds have been developed in more recent works, see \cite{incr_bt,gen_incr_bt, redstochbil, nonlinear_drift,  stoch_nonlinear_BT}. These results rely on the assumption of having a system with global one-sided Lipschitz vector fields. This assumption does not hold when quadratic terms are involved. MOR techniques have also been developed for lifted quadratic bilinear representations of nonlinear systems in several works, see for instance \cite{morBenG24, nonlinear_irka, Kramer2022}.

Besides arising directly in the discretization of various nonlinear partial differential equations, quadratic bilinear systems frequently result from polynomial reformulations or lifting procedures applied to more general nonlinear systems. Therefore, their analysis and dimension reduction deserves further attention. Consequently, the development of mathematically rigorous MOR techniques for quadratic bilinear systems is of interest far beyond this particular class of models. Gramian-based MOR has therefore been proposed for quadratic bilinear systems, where generalized reachability and observability Gramians are defined by  Volterra kernels \cite{morBenG24, nonlinear_irka}. Moreover, interpolation- and moment-matching methods for these class of systems have been established in \cite{benner_gug_werner, quadr_bil_moment_matching}. Despite promising numerical results, however, the theoretical understanding of these approaches remains substantially less complete than in the linear setting. In particular, the dynamical origin of the proposed Gramians has remained largely unexplained, sufficient conditions for their existence are only partially understood, and the connection between the Gramians and dominant state-space directions has not been rigorously established. Most importantly, existing balanced truncation methods for quadratic bilinear systems do not provide rigorous input-output error bounds comparable to those available for linear systems.

The purpose of the present work is to provide a comprehensive theoretical foundation for balanced truncation of quadratic bilinear systems. Our analysis begins with a novel tree-based representation of the system solutions, in which the nonlinear dynamics are decomposed into a hierarchy of coupled bilinear subsystems indexed by full binary trees. This representation yields sufficient conditions for the existence of global solutions and leads to new estimates of the system output. Motivated by these estimates, we introduce time-limited and infinite-horizon reachability and observability Gramians together with sufficient conditions for their existence. We further show that these Gramians satisfy nonlinear matrix equations, thereby providing a rigorous analytical interpretation of the algebraic equations underlying previously proposed balancing techniques.

Building upon this Gramian framework, we establish precise links between the eigenspaces of the Gramians and dominant state-space directions, thereby providing a rigorous justification for Gramian-based state truncation. We additionally prove that the kernels of the Gramians can be removed without introducing any approximation error, yielding exact reduced-order models before balancing is performed. Finally, we develop a balanced truncation procedure for quadratic bilinear systems and derive an a priori error bound for the resulting reduced-order models. Altogether, the presented theory unifies tree-based solution representations, nonlinear Gramian theory, associated matrix equations, dominant subspace identification, and balanced truncation within a single analytical framework, thereby closing several theoretical gaps in the current understanding of Gramian-based model reduction for quadratic bilinear systems.

The remainder of the paper is organized as follows. Section~$2$ develops the tree-based solution representation of quadratic bilinear systems and derives associated output estimates. Based on these results, reachability Gramians are introduced, sufficient conditions for their existence are established, and their characterization by nonlinear matrix equations is derived. Section~$3$ develops the corresponding observability theory, investigates dominant state-space directions, and constructs exact reduced-order models obtained by removing Gramian kernels. Subsequently, the balanced truncation procedure is presented, and the main theoretical result of the paper, namely an a priori error bound for the reduced-order approximation, is established.

\section{Quadratic bilinear systems, binary trees and tree-based solution representations}\label{sec2}

In this paper, we study the following quadratic bilinear system
 \begin{subequations}\label{original_system}
\begin{align}\label{stochstatenew}
             \dot x(t)&=Ax(t)+Bu(t)+ N (\mathfrak u(t)\otimes x(t))+H(x(t)\otimes x(t)),\quad x(0) = 0,\\ \label{output_eq}
            y(t) &= Cx(t),\quad t\in [0, T],
\end{align}
\end{subequations}
with $A \in\mathbb R^{n\times n}$,  $B\in  \mathbb R^{n\times m}$, $C\in \mathbb R^{p\times n}$, $N:=\begin{bmatrix}N_1 & \ldots & N_m\end{bmatrix}$,  $H:=\begin{bmatrix}H_1 & \ldots & H_n\end{bmatrix}$, where $N_k, H_j\in\mathbb R^{n\times n}$, $k\in\{1, \dots, m\}$ and $j\in\{1, \dots, n\}$. The controls $u, \mathfrak u\in L^2_T$ take values in $\mathbb R^m$ and satisfy $\|u\|_{L^2_T}^2:=\int_0^T \|u(t)\|^2 dt<\infty$ as well as $\|\mathfrak u\|_{L^2_T}<\infty$  for a given $T>0$, where $\|\cdot\|$ denotes the Euclidean norm and $\langle \cdot, \cdot \rangle$ the corresponding inner product. Further note that $\cdot \otimes \cdot$ denotes the Kronecker product between two matrices/vectors.

\subsection{Series representations for global solutions}

The first goal is to represent the solution $x$ of \eqref{stochstatenew} by $\sum_{i=1}^\infty x_i$, where $x_1$ solves the bilinear system \begin{align}\label{eq_x1}
  \dot x_1(t)=Ax_1(t)+Bu(t)+N (\mathfrak u(t)\otimes x_{1}(t)),\quad x_1(0) = 0                                                                                                                                                                                                                                                                                    \end{align}
and the other $x_i$, $i\geq 2$, result from the following coupled bilinear systems \begin{align}\label{eq_xi}
             \dot x_i(t)=Ax_i(t)+ N (\mathfrak u(t)\otimes x_{i}(t))%+H\Big(\sum_{j_i, j_2=1 \atop j_1+j_2=i}^{i-1} x_{j_1}(t)\otimes x_{j_2}(t)\Big),\quad x_i(0) = 0.
             +H\Big(\sum_{k=1}^{i-1} x_{k}(t)\otimes x_{i-k}(t)\Big),\quad x_i(0) = 0.
\end{align}
This idea differs from the approach used in \cite{morBenG24, nonlinear_irka, benner_gug_werner}, where quadratic bilinear systems are decomposed into a sequence of coupled linear subsystems.
We introduce tree-based expansions for each $x_i$ yielding a representation for $\sum_{i=1}^\infty x_i$. As a consequence, we obtain criteria for the boundedness of $\sum_{i=1}^\infty \|x_i\|$ on $[0, T]$. The following proposition states that this property ensures that $x=\sum_{i=1}^\infty x_i$ is the global solution of \eqref{stochstatenew}. This aspect remains open in previous works on quadratic bilinear systems \cite{morBenG24, nonlinear_irka}.
\begin{prop}\label{prop_global_sol}
Given that $\sum_{i=1}^\infty \|x_i(t)\|\leq c<\infty$ for all $t\in [0, T]$ and given $u, \mathfrak u\in L^2_T$, then $\sum_{i=1}^\infty x_i(t)$,  $t\in [0, T]$, solves \eqref{stochstatenew} globally.
\end{prop}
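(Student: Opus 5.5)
We pass to the integral (mild/Carathéodory) formulation and show that the partial sums converge in the right sense. Each $x_i$ is an absolutely continuous solution of a linear time-varying ODE with $L^2_T$ coefficients, namely \eqref{eq_x1} or \eqref{eq_xi}. Hence, for all $t\in[0,T]$,
\begin{align*}
x_i(t)=\int_0^t \Big(Ax_i(s)+ N(\mathfrak u(s)\otimes x_i(s)) + \delta_{i1}Bu(s) + (1-\delta_{i1})H\sum_{k=1}^{i-1}x_k(s)\otimes x_{i-k}(s)\Big)ds.
\end{align*}
Here the existence of $x_i$ follows inductively: once $x_1,\dots,x_{i-1}$ are continuous, the inhomogeneity is continuous and the bilinear term $N(\mathfrak u\otimes\cdot)$ has an integrable coefficient. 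Summing over $i=1,\dots,M$ gives, with $S_M:=\sum_{i=1}^M x_i$,
\begin{align*}
S_M(t)=\int_0^t \Big(AS_M(s)+Bu(s)+N(\mathfrak u(s)\otimes S_M(s)) + H\sum_{i=2}^{M}\sum_{k=1}^{i-1}x_k(s)\otimes x_{i-k}(s)\Big)ds.
\end{align*}

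Next we pass to the limit $M\to\infty$. By hypothesis $S_M(t)\to x(t):=\sum_i x_i(t)$ pointwise, and $\|S_M(t)\|\le c$ uniformly in $M$ and $t$. Dominated convergence therefore handles the linear terms: $\|AS_M\|\le\|A\|c$, and $\|N(\mathfrak u\otimes S_M)\|\le \|N\|\,\|\mathfrak u(s)\|\,c$, which is integrable on $[0,T]$ since $\mathfrak u\in L^2_T\subset L^1_T$.

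The key step, and the main obstacle, is the quadratic term. The double sum $\sum_{i=2}^M\sum_{k+l=i}x_k\otimes x_l$ is a truncated Cauchy product, which is not the same as $S_M\otimes S_M$. I will use absolute convergence. The full double series $\sum_{k,l\ge1}x_k(s)\otimes x_l(s)$ converges absolutely, because
\[
\sum_{k,l}\|x_k\otimes x_l\|=\sum_{k,l}\|x_k\|\|x_l\|=\Big(\sum_k\|x_k\|\Big)^2\le c^2 .
\]
Consequently any rearrangement, in particular summation along the diagonals $k+l=i$, converges to $x(s)\otimes x(s)$. The partial sums are also bounded by $c^2$ uniformly in $M$ and $s$, which gives an integrable dominating function on the finite interval. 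Dominated convergence then yields
\[
\int_0^t H\sum_{i=2}^M\sum_{k=1}^{i-1}x_k\otimes x_{i-k}\,ds\to\int_0^t H(x\otimes x)\,ds .
\]

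Combining these limits,
\[
x(t)=\int_0^t \big(Ax+Bu+N(\mathfrak u\otimes x)+H(x\otimes x)\big)ds \quad\text{for all } t\in[0,T].
\]
The integrand lies in $L^1$: $x$ is measurable as a pointwise limit and bounded by $c$, while $u,\mathfrak u\in L^1_T$. Hence $x$ is absolutely continuous with $x(0)=0$, and it satisfies \eqref{stochstatenew} almost everywhere, i.e. it is a global solution on $[0,T]$ in the Carathéodory sense. Finally, since the right-hand side of \eqref{stochstatenew} is locally Lipschitz in $x$ with $L^1$-in-time constants on bounded sets, this solution is the unique one. It therefore coincides with the solution of \eqref{stochstatenew}, which in particular cannot blow up before $T$.
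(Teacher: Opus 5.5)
Your proof is correct and follows the paper's argument. You sum the integral forms of \eqref{eq_x1} and \eqref{eq_xi} up to a finite index and pass to the limit by dominated convergence with the integrable majorant $\|A\|c+\|B\|\|u(s)\|+\|N\|\|\mathfrak u(s)\|c+\|H\|c^2$. Two of your additions spell out what the paper leaves implicit: that the diagonally truncated double sum converges to $x(s)\otimes x(s)$ by absolute convergence, and the Gronwall-type uniqueness remark, which Corollary~\ref{x_eq_sum} tacitly relies on.
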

\begin{proof}
First, let us rewrite the quadratic term as \begin{align*}
 H\Big(\sum_{k=1}^{i-1} x_{k}(t)\otimes x_{i-k}(t)\Big) = H\Big(\sum_{j_i, j_2\geq 1 \atop j_1+j_2=i} x_{j_1}(t)\otimes x_{j_2}(t)\Big).
\end{align*}
For a given index $\mathfrak n$, let us sum the equations of the $x_i$'s up to $\mathfrak{n}$. This leads to \begin{align}\label{nthiterate}
    \sum_{i=1}^\mathfrak n x_i(t)=\int_0^t A \sum_{i=1}^\mathfrak n x_i(s)+B u(s)+ N (\mathfrak u(s)\otimes \sum_{i=1}^\mathfrak n x_{i}(s))+H\Big(\sum_{j_i, j_2\geq 1 \atop j_1+j_2\leq \mathfrak n} x_{j_1}(s)\otimes x_{j_2}(s)\Big)\dd s
\end{align}
for $t\in [0, T]$. By assumption, we obtain \begin{align*}
   & \|A \sum_{i=1}^\mathfrak n x_i(s)+B u(s)+ N (\mathfrak u(s)\otimes \sum_{i=1}^\mathfrak n x_{i}(s))+H\Big(\sum_{j_i, j_2\geq 1 \atop j_1+j_2\leq \mathfrak n} x_{j_1}(s)\otimes x_{j_2}(s)\Big)\|\\
  &\leq \|A\| \sum_{i=1}^\mathfrak n \|x_i(s)\|+\|B\| \|u(s)\|+ \|N\|\|\mathfrak u(s)\|\sum_{i=1}^\mathfrak n \|x_{i}(s)\|+\|H\|\sum_{j_i, j_2\geq 1 \atop j_1+j_2\leq \mathfrak n} \|x_{j_1}(s)\| \|x_{j_2}(s)\|\\
  &\leq \|A\| c+\|B\| \|u(s)\|+ \|N\|\|\mathfrak u(s)\|c+\|H\|c^2
\end{align*}
for all $s\in[0, T]$. This upper bound is integrable, so that Lebesgue's theorem can be applied. We can take the limit in \eqref{nthiterate}  as $\mathfrak n\to\infty$ and obtain the result of this proposition.
\end{proof}

\subsection{Binary trees and tree-based solution representations}
We consider rooted trees and, in particular, study full binary trees below. These are trees in which every node has either $0$ or $2$ children. Here, the children of a node $v$ are the nodes directly connected to $v$ that are one level farther from the root. Moreover, the nodes without children are called leaves.
The sets $\mathcal T_i$ of binary trees with $i$ leaves can now be formally defined as follows:
\begin{equation}\label{def_bin_trees}
\begin{aligned}
 \mathcal T_1= \{\bullet\}, \quad \mathcal T_i &=  \{\tau= [\tau_1, \tau_2]: \tau_1 \in\mathcal T_k, \tau_2\in \mathcal T_{i-k}, \text{ where }k=1, \dots, i-1\}    \\
 &=  \{\tau= [\tau_1, \tau_2]: \vert \tau \vert= 2i-1, \text{ and  } \tau \text{ has } i \text{ leaves}\},\quad i\geq 2.\end{aligned}
 \end{equation}
The bracket operation $[\tau_1,\dots, \tau_k]$ forms a new rooted tree by grafting the rooted trees $\tau_1, \dots, \tau_k$  onto a new common root, e.g., $[\bullet, \bullet]= \begin{tikzpicture}[
  grow=up,
  level distance=0.2cm,
  sibling distance=0.4cm,
  every node/.style={circle,draw,fill,inner sep=1.2pt}
]
\node {}
  child { node {} }
  child { node {} };
\end{tikzpicture}$.
\begin{example}
 For illustration purposes, we state $\mathcal T_2$, $\mathcal T_3$ and $\mathcal T_4$. These are given by
\begin{align*}
\mathcal T_2 = \bigg\{\begin{tikzpicture}[
  grow=up,
  level distance=0.2cm,
  sibling distance=0.4cm,
  every node/.style={circle,draw,fill,inner sep=1.2pt}
]
\node {}
  child { node {} }
  child { node {} };
\end{tikzpicture}\bigg\}, \quad
\mathcal T_3=
\bigg\{
\begin{tikzpicture}[
  baseline=(root.base),
  grow=up,
  level distance=0.2cm,
  sibling distance=0.4cm,
  every node/.style={circle,draw,fill,inner sep=1.2pt}
]
\node (root) {}
  child {
    node {}
      child {node {}}
      child {node {}}
  }
  child {node {}};
\end{tikzpicture},\,
\begin{tikzpicture}[
  baseline=(root.base),
  grow=up,
  level distance=0.2cm,
  sibling distance=0.4cm,
  every node/.style={circle,draw,fill,inner sep=1.2pt}
]
\node (root) {}
  child {node {}}
  child {
    node {}
      child {node {}}
      child {node {}}
  };
\end{tikzpicture}
\bigg\},\quad
\mathcal T_4=
\bigg\{\begin{tikzpicture}[grow=up,
  level distance=0.2cm,
  sibling distance=0.4cm,
  every node/.style={circle,draw,fill,inner sep=1.2pt}]
\node {}
  child {
    node {}
      child {
        node {}
          child { node {} }
          child { node {} }
      }
      child { node {} }
  }
  child { node {} };
\end{tikzpicture},\,
\begin{tikzpicture}[grow=up,
  level distance=0.2cm,
  sibling distance=0.4cm,
  every node/.style={circle,draw,fill,inner sep=1.2pt}]
\node {}
  child {
    node {}
      child { node {} }
      child {
        node {}
          child { node {} }
          child { node {} }
      }
  }
  child { node {} };
\end{tikzpicture},\,
\begin{tikzpicture}[
  grow=up,
  level distance=0.2cm,
  level 1/.style={sibling distance=0.6cm},
  level 2/.style={sibling distance=0.4cm},
  every node/.style={circle,draw,fill,inner sep=1.2pt}
]
\node {}
  child {
    node {}
      child { node {} }
      child { node {} }
  }
  child {
    node {}
      child { node {} }
      child { node {} }
  };
\end{tikzpicture},\,
\begin{tikzpicture}[baseline=(root.base), grow=up,
  level distance=0.2cm,
  sibling distance=0.4cm,
  every node/.style={circle,draw,fill,inner sep=1.2pt}]
\node (root) {}
  child { node {} }
  child {
    node {}
      child {
        node {}
          child { node {} }
          child { node {} }
      }
      child { node {} }
  };
\end{tikzpicture},\,
\begin{tikzpicture}[baseline=(root.base), grow=up,
  level distance=0.2cm,
  sibling distance=0.4cm,
  every node/.style={circle,draw,fill,inner sep=1.2pt}]
\node (root) {}
  child { node {} }
  child {
    node {}
      child { node {} }
      child {
        node {}
          child { node {} }
          child { node {} }
      }
  };
\end{tikzpicture}
\bigg\}.
\end{align*}
\end{example}
Let $c_i := \vert\mathcal T_i\vert$ denote the cardinalities of $\mathcal T_i$. By the recursive construction above, the numbers $(c_i)$
 satisfy\begin{align}   \label{catalan_number_recursion}                                                                                                                                                         c_i=\sum_{k=1}^{i-1} c_k c_{i-k}, \quad i\geq 2, \quad c_1=1.                                                                                                                                                            \end{align}
The solution is given by the Catalan numbers, so that $c_i = \frac{1}{i} \binom{2(i-1)}{i-1}$.
Next, we associate a variable $t_v\in[0,T]$ with each node $v$ of a tree $\tau$. For every non-root node $v$, let $\parent(v)$ denote its unique parent, that is, the unique node immediately preceding $v$ on the path to the root. We define the domain of a tree $\tau$ by \begin{align*}
 D(\tau; t) = \{(t_v)_{v\in\tau}\in\mathbb R^{\vert\tau\vert}: 0\leq t_v\leq t\text{ and } t_v\leq t_{\parent(v)}\ \text{for every non-root node }v\in\tau\},                                                                                                                                                                                                                                                                                                                                                                                                                   \end{align*}
$t\in[0, T]$, where $\vert\tau\vert$ is the number of nodes of $\tau$. The volume  of this tree domain (w.r.t. the Lebesgue measure on $\mathbb R^{\vert\tau\vert}$) is given by
\begin{align}\label{vol_D_tau}
\Vol(D(\tau; t))= \frac{t^{\vert\tau\vert}}{\gamma(\tau)},
\end{align}
which can be shown via an induction proof. In \eqref{vol_D_tau} the tree factorial $\gamma$ naturally occurs that is defined by \begin{align*}
 \gamma(\bullet) = 1,\quad \gamma([\tau_1,\ldots,\tau_k]) = |[\tau_1,\ldots,\tau_k]| \prod_{i=1}^k \gamma(\tau_i).
\end{align*}
Below, we use full binary trees to represent $x_i$, $i\geq 1$, satisfying \eqref{eq_x1}  and \eqref{eq_xi}. First, we require the notion of fundamental solutions of \eqref{eq_x1}, i.e., the bilinear part of \eqref{stochstatenew}. Further, the stochastic fundamental solution is introduced, where $\mathfrak{u}$ is white noise.
\begin{defn}\label{defn_fund}
Given that $s\leq t$, we introduce the fundamental solution $\Phi$  of the bilinear part  as the solution of
 \begin{align*}
 \Phi(t,s) = I +\int_s^t A \Phi(v,s) \dd v + \int_s^t N \big(\mathfrak u(v)\otimes \Phi(v,s)\big) \dd v,
\end{align*}
where $I$ is the identity matrix. Let $w$ be an $m$-dimensional Wiener process with independent entries. The fundamental solution is denoted by $\Phi_w$ if $\mathfrak{u}(v)=\dd w(v)/\dd v$ and $s=0$, i.e.,   \begin{align}\label{stoch_fund}
 \Phi_w(t) = I +\int_0^t A \Phi_w(v) \dd v + \int_0^t N \big(\dd w(v)\otimes \Phi_w(v)\big).
\end{align}
\end{defn}
The fundamental solution is crucial as it allows to write \begin{align*}
 x_1(t)= \int_0^t \Phi(t, s) B u(s) \dd s,\quad
 x_i(t)=\sum_{k=1}^{i-1}\int_0^t \Phi(t, s) H\big (x_k(s)\otimes x_{i-k}(s) \big)\dd s,\quad i\geq 2.
 \end{align*}
However, notice that $\Phi$ is control dependent.
These representations can be shown using the product rule and exploiting that $\Phi(t, s)= \Phi(t, 0) \Phi(s, 0)^{-1}$.
We are now ready to find representations for the state variables $x_i$.
\begin{thm}\label{tree_rep}
Let $x_i$, $i\geq 1$, be the solutions of \eqref{eq_x1}  and \eqref{eq_xi}. Then, it holds that \begin{align*}
 x_i(t) = \sum_{\tau\in\mathcal T_i} \int_{D(\tau; t)} F_{\tau}(t, (t_v)_{v\in\tau}) \big(\bigotimes_{\ell\in\leaves(\tau)} u(t_\ell)\big)\dd(t_v)_{v\in\tau},\quad t\in[0, T],
               \end{align*}
where we set \begin{align*}
F_{\bullet}(t, t_\ell) = \Phi(t, t_\ell) B
             \end{align*}
and for a binary tree $\tau=[\tau_1, \tau_2]_{v_0}\in\mathcal T_i$, $i\geq 2$, with root $v_0$, we define \begin{align*}
F_{\tau}(t, t_{v_0}, (t_{v_1})_{v_1\in\tau_1}, (t_{v_2})_{v_2\in\tau_2}) = \Phi(t, t_{v_0}) H\big(F_{\tau_1}(t_{v_0}, (t_{v_1})_{v_1\in\tau_1})\otimes F_{\tau_2}(t_{v_0}, (t_{v_2})_{v_2\in\tau_2})\big).
             \end{align*}
\end{thm}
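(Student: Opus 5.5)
We argue by strong induction on $i$, using the variation-of-constants formulas stated just before the theorem,
\begin{align*}
 x_1(t)= \int_0^t \Phi(t, s) B u(s) \dd s,\qquad
 x_i(t)=\sum_{k=1}^{i-1}\int_0^t \Phi(t, s) H\big (x_k(s)\otimes x_{i-k}(s) \big)\dd s,\quad i\geq 2.
\end{align*}
The base case $i=1$ is immediate. Here $\mathcal T_1=\{\bullet\}$, the single node is both root and leaf, and $D(\bullet;t)=[0,t]$. With $F_\bullet(t,t_\ell)=\Phi(t,t_\ell)B$, the claimed formula coincides with the first identity above.

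For the induction step, fix $i\geq 2$ and assume the representation holds for all $x_k$ with $k<i$. Evaluating at time $s$, we substitute the expansions of $x_k(s)$ and $x_{i-k}(s)$ into the second identity. The Kronecker product is bilinear, and it satisfies the mixed-product rule
\[
(M_1 a)\otimes(M_2 b)=(M_1\otimes M_2)(a\otimes b).
\]
Together with Fubini's theorem, this lets us write $x_k(s)\otimes x_{i-k}(s)$ as
\[
\sum_{\tau_1\in\mathcal T_k}\sum_{\tau_2\in\mathcal T_{i-k}}\int_{D(\tau_1;s)}\int_{D(\tau_2;s)} \big(F_{\tau_1}(s,\cdot)\otimes F_{\tau_2}(s,\cdot)\big)\Big(\bigotimes_{\ell\in\leaves(\tau_1)}u(t_\ell)\otimes\bigotimes_{\ell\in\leaves(\tau_2)}u(t_\ell)\Big).
\]
The ordering of the leaves of $\tau=[\tau_1,\tau_2]$ is taken as the leaves of $\tau_1$ followed by those of $\tau_2$. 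Then the last factor is exactly $\bigotimes_{\ell\in\leaves(\tau)}u(t_\ell)$. Next we multiply by $\Phi(t,s)H$, rename $s=t_{v_0}$ as the root variable, and integrate over $s\in[0,t]$. The integrand then becomes $F_{\tau}(t,(t_v)_{v\in\tau})$ by the recursive definition of $F_\tau$.

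Two bookkeeping facts remain to be checked. The first is the domain identity
\[
D([\tau_1,\tau_2];t)=\{(t_{v_0},(t_{v_1}),(t_{v_2})): 0\le t_{v_0}\le t,\ (t_{v_1})\in D(\tau_1;t_{v_0}),\ (t_{v_2})\in D(\tau_2;t_{v_0})\}.
\]
It holds because the parent constraints in $\tau$ are exactly those within $\tau_1$ and $\tau_2$, plus the requirement that the roots of $\tau_1,\tau_2$ lie below $t_{v_0}$. All other bounds $t_v\le t$ are then implied by the chain of parent constraints. The second fact is that summing over $k=1,\dots,i-1$ and over $(\tau_1,\tau_2)\in\mathcal T_k\times\mathcal T_{i-k}$ enumerates $\mathcal T_i$ bijectively. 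This is precisely the recursive definition \eqref{def_bin_trees}: a full binary tree is determined by its ordered pair of root subtrees. Hence the double sum collapses to $\sum_{\tau\in\mathcal T_i}$, which completes the induction.

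The main obstacle is justifying the interchanges of sums and integrals, together with the precise leaf and variable ordering in the Kronecker products. The sums are finite, so only Fubini requires care. We handle it by noting that $\Phi$ is continuous and hence bounded on $\{0\le s\le t\le T\}$ for $\mathfrak u\in L^2_T$, by a Gronwall argument. Consequently each $F_\tau$ is bounded, and the integrand is dominated by a constant times $\prod_{\ell\in\leaves(\tau)}\|u(t_\ell)\|$. This product is integrable over the bounded domain because $u\in L^2_T\subset L^1_T$.
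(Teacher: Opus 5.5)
Your proposal is correct and follows essentially the same route as the paper. Both use strong induction on $i$ via the variation-of-constants formula for $x_i$, substitute the expansions of $x_k$ and $x_{i-k}$, and regroup through the nested description of $D([\tau_1,\tau_2];t)$ together with the bijection $(k,\tau_1,\tau_2)\mapsto[\tau_1,\tau_2]$ onto $\mathcal T_i$. The paper leaves two details implicit, the leaf ordering in the Kronecker product and the domination argument justifying Fubini, and your treatment of both is sound.
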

\begin{proof}
The claim of this theorem is true for $x_1$, since \begin{align*}
 x_1(t)= \int_0^t \Phi(t, s) B u(s) \dd s.                                                   \end{align*}
Let us now assume that the statement is true for $x_1, x_2, \dots, x_{i-1}$. Using the solution representation of $x_i$, $i\geq 2$, based on the fundamental solution in Definition \ref{defn_fund} and the induction hypothesis, we obtain \begin{align*}
 &x_i(t)=\sum_{k=1}^{i-1}\int_0^t \Phi(t, t_{v_0}) H\big (x_k(t_{v_0})\otimes x_{i-k}(t_{v_0}) \big)\dd t_{v_0}\\
 &=\sum_{k=1}^{i-1}\int_0^t \Phi(t, t_{v_0}) H\bigg (
 \sum_{\tau_1\in\mathcal T_k} \int_{D(\tau_1; t_{v_0})} F_{\tau_1}(t_{v_0}, (t_{v_1})_{v_1\in\tau_1}) \big(\bigotimes_{\ell_1\in\leaves(\tau_1)} u(t_{\ell_1})\big)\dd(t_{v_1})_{v_1\in\tau_1}\\
 &\quad
 \otimes \sum_{\tau_2\in\mathcal T_{i-k}} \int_{D(\tau_2; t_{v_0})} F_{\tau_2}(t_{v_0}, (t_{v_2})_{v_2\in\tau_2}) \big(\bigotimes_{\ell_2\in\leaves(\tau_2)} u(t_{\ell_2})\big)\dd(t_{v_2})_{v_2\in\tau_2}\bigg)\dd t_{v_0}   \\
 &=\sum_{k=1}^{i-1}\sum_{\tau_1\in\mathcal T_k}\sum_{\tau_2\in\mathcal T_{i-k}} \int_0^t \int_{D(\tau_2; t_{v_0})} \int_{D(\tau_1; t_{v_0})} \Phi(t, t_{v_0}) H\bigg (
  F_{\tau_1}(t_{v_0}, (t_{v_1})_{v_1\in\tau_1})\otimes  F_{\tau_2}(t_{v_0}, (t_{v_2})_{v_2\in\tau_2})\bigg)\\
 &\quad
  \bigg(\bigotimes_{\ell_1\in\leaves(\tau_1)} u(t_{\ell_1})\otimes \bigotimes_{\ell_2\in\leaves(\tau_2)} u(t_{\ell_2})\big)\bigg) \dd(t_{v_1})_{v_1\in\tau_1}\dd(t_{v_2})_{v_2\in\tau_2}\dd t_{v_0}.                                                                                                                                                      \end{align*}
We define the tree $\tau=[\tau_1, \tau_2]_{v_0}$ with root ${v_0}$. We observe that the leaves of $\tau$ are the union of the leaves of $\tau_1$ and $\tau_2$. Further, we have that \begin{align*}
D(\tau; t)=\{(t_{v_0}, (t_{v_1})_{v_1\in\tau_1}, (t_{v_2})_{v_2\in\tau_2}): 0\leq t_{v_0}\leq t, (t_{v_1})_{v_1\in\tau_1}\in D(\tau_1; t_{v_0}), (t_{v_2})_{v_2\in\tau_2}\in D(\tau_2; t_{v_0})\}.                                                                                                                                                                             \end{align*}
By construction of the set $\mathcal T_i$, we see that $\sum_{k=1}^{i-1}\sum_{\tau_1\in\mathcal T_k}\sum_{\tau_2\in\mathcal T_{i-k}}\dots = \sum_{\tau\in\mathcal T_{i}}\dots$. Therefore, we have \begin{align*}
 x_i(t) = \sum_{\tau\in\mathcal T_i} \int_{D(\tau; t)} F_{\tau}(t, (t_v)_{v\in\tau}) \big(\bigotimes_{\ell\in\leaves(\tau)} u(t_\ell)\big)\dd(t_v)_{v\in\tau}
               \end{align*}
exploiting the definition of $F$.
This yields the claim of the theorem.
\end{proof}
Theorem \ref{tree_rep} is the foundation for finding a solution representation of the state $x$ similar to a Volterra series. Solution representations of Volterra-type are the basis for previous model reduction approaches for quadratic bilinear system \cite{morBenG24, nonlinear_irka, benner_gug_werner}. However, such representations could not be established so far. The result of Theorem \ref{tree_rep} is crucial for the output bound in the next section.

\subsection{An output bound for quadratic bilinear systems based on reachability Gramians}
An application of the representation in Theorem \ref{tree_rep} is the following (output) bound.
\begin{kor}\label{kor_first_bound}
Let $C\in\mathbb R^{p\times n}$. Then, for the solutions of \eqref{eq_x1}  and \eqref{eq_xi} holds that
\begin{align*}
\left\|C x_i(t)\right\|\leq \sqrt{\sum_{\tau\in\mathcal T_i}\int_{D(\tau; t)} \left\|C F_{\tau}(t, (t_v)_{v\in\tau})\right\|_F^2 \dd(t_v)_{v\in\tau}} \sqrt{\sum_{\tau\in\mathcal T_i}\int_{D(\tau; t)}\prod_{\ell\in\leaves(\tau)}\|u(t_\ell)\|^2\dd(t_v)_{v\in\tau}}.
\end{align*}
\end{kor}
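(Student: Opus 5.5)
We start from the representation in Theorem \ref{tree_rep} and multiply by $C$:
\begin{align*}
C x_i(t) = \sum_{\tau\in\mathcal T_i} \int_{D(\tau; t)} C F_{\tau}(t, (t_v)_{v\in\tau}) \big(\bigotimes_{\ell\in\leaves(\tau)} u(t_\ell)\big)\dd(t_v)_{v\in\tau}.
\end{align*}
The triangle inequality for the Euclidean norm, applied to both the finite sum over $\tau\in\mathcal T_i$ and the integrals, gives
\begin{align*}
\|C x_i(t)\|\leq \sum_{\tau\in\mathcal T_i} \int_{D(\tau; t)} \big\|C F_{\tau}(t, (t_v)_{v\in\tau}) \big(\bigotimes_{\ell\in\leaves(\tau)} u(t_\ell)\big)\big\|\dd(t_v)_{v\in\tau}.
\end{align*}

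Next we bound the integrand pointwise. The spectral norm is dominated by the Frobenius norm, and the Kronecker product is multiplicative in the Euclidean norm, i.e. $\|a\otimes b\|=\|a\|\|b\|$. Together these give
\begin{align*}
\big\|C F_{\tau}\big(\bigotimes_{\ell} u(t_\ell)\big)\big\|\leq \|C F_\tau\|_F\prod_{\ell\in\leaves(\tau)}\|u(t_\ell)\|.
\end{align*}

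The remaining quantity is a sum over $\tau$ of integrals of a product of two nonnegative functions. We view it as an inner product on $L^2$ of the disjoint union $\bigsqcup_{\tau\in\mathcal T_i} D(\tau;t)$, equipped with the sum of the Lebesgue measures on each component. A single application of the Cauchy--Schwarz inequality on this space then yields exactly the product of the two square roots in the statement. Equivalently, one can apply Cauchy--Schwarz in $L^2(D(\tau;t))$ for each tree and then the discrete Cauchy--Schwarz inequality in $\mathbb R^{c_i}$.

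The one point that needs care is measurability and finiteness of the integrands. $F_\tau$ is continuous in its time arguments, since it is built recursively from the fundamental solution $\Phi$, which is absolutely continuous. The functions $\prod_\ell \|u(t_\ell)\|^2$ are measurable. If either factor is infinite, the bound holds trivially. No further obstacle is expected; the argument is essentially Cauchy--Schwarz applied to the representation of Theorem \ref{tree_rep}.
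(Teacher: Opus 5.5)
Your proposal is correct and follows essentially the same route as the paper's proof. Both apply the triangle inequality to the tree representation of Theorem \ref{tree_rep}, bound the integrand by $\|CF_\tau\|_F$ times the norm of the Kronecker product of the controls, use Cauchy--Schwarz in $L^2(D(\tau;t))$ for each tree together with $\|a\otimes b\|=\|a\|\|b\|$, and finish with the discrete Cauchy--Schwarz inequality over $\mathcal T_i$; your disjoint-union formulation simply merges the last two steps into one.
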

\begin{proof}
Based on the inequality of Cauchy-Schwarz and the result of Theorem \ref{tree_rep}, we obtain that
\begin{align*}
\left\|C x_i(t)\right\|&\leq \sum_{\tau\in\mathcal T_i} \int_{D(\tau; t)} \left\|C F_{\tau}(t, (t_v)_{v\in\tau})\right\|_F \Big\|\bigotimes_{\ell\in\leaves(\tau)} u(t_\ell)\Big\|\dd(t_v)_{v\in\tau}\\
&\leq \sum_{\tau\in\mathcal T_i} \sqrt{\int_{D(\tau; t)} \left\|C F_{\tau}(t, (t_v)_{v\in\tau})\right\|_F^2 \dd(t_v)_{v\in\tau}} \sqrt{\int_{D(\tau; t)}\Big\|\bigotimes_{\ell\in\leaves(\tau)} u(t_\ell)\Big\|^2\dd(t_v)_{v\in\tau}}\\
&=\sum_{\tau\in\mathcal T_i} \sqrt{\int_{D(\tau; t)} \left\|C F_{\tau}(t, (t_v)_{v\in\tau})\right\|_F^2 \dd(t_v)_{v\in\tau}} \sqrt{\int_{D(\tau; t)}\prod_{\ell\in\leaves(\tau)}\|u(t_\ell)\|^2\dd(t_v)_{v\in\tau}}\\
&\leq \sqrt{\sum_{\tau\in\mathcal T_i}\int_{D(\tau; t)} \left\|C F_{\tau}(t, (t_v)_{v\in\tau})\right\|_F^2 \dd(t_v)_{v\in\tau}} \sqrt{\sum_{\tau\in\mathcal T_i}\int_{D(\tau; t)}\prod_{\ell\in\leaves(\tau)}\|u(t_\ell)\|^2\dd(t_v)_{v\in\tau}}.
\end{align*}
This concludes the proof.
\end{proof}
\begin{remark}
 It is only reasonable to apply the estimate in Corollary \ref{kor_first_bound} for $i\geq 2$ if $H\neq 0$. Given that $H=0$, we are in the bilinear case, where $x(t)=x_1(t)=\int_0^t \Phi(t, s) B u(s) \dd s$ and $x_i\equiv 0$ for all $i\geq 2$.
\end{remark}
The definition of the Frobenius norm and the properties of the trace yield \begin{align}\label{rel_via_Frob}
\sum_{\tau\in\mathcal T_i} \int_{D(\tau; t)} \left\|C F_{\tau}(t, (t_v)_{v\in\tau})\right\|_F^2 \dd(t_v)_{v\in\tau}= \trace\Big(C \sum_{\tau\in\mathcal T_i}\int_{D(\tau; t)} F_{\tau}(t, (t_v)_{v\in\tau})F_{\tau}(t, (t_v)_{v\in\tau})^\top \dd(t_v)_{v\in\tau} C^\top\Big).                                                                                 \end{align}
Below, we provide a theorem containing an upper bound for the matrix-valued function \\$\sum_{\tau\in\mathcal T_i}\int_{D(\tau; t)} F_{\tau}(t, (t_v)_{v\in\tau})F_{\tau}(t, (t_v)_{v\in\tau})^\top \dd(t_v)_{v\in\tau}$ independent of $t$. This allows to uniformly bound $Cx_i$ in $t$ based on the result of Corollary \ref{kor_first_bound}. In the following, we say that a symmetric matrix $\bar M$ bounds another symmetric matrix $M$ from above if $\bar M-M$ is positive semidefinite. Then, we write $M\leq \bar M$. Before we focus on the above mentioned theorem, we need a lemma on a bound for the fundamental solution of the bilinear part. In that context, let us introduce a Lyapunov operator by \begin{align}\label{lyap_op}
\mathcal L(X):= A X + X A^\top + N (I\otimes X) N^\top,% \quad \bar{\mathcal L}(X):= A_{\bar c} X + X A_{\bar c}^\top + N (I\otimes X) N^\top,
               \end{align}
where %$A_{\bar c}:=A+I \bar c^2/2$ with $\bar c>0$ and
$X\in\mathbb R^{n\times n}$.
\begin{lem}\label{fund_est}
Let $\Phi$ be the fundamental solution according to Definition \ref{defn_fund} and $M$ a positive semidefinite matrix. Then, it holds that
 \begin{align}\label{est_phi}
\Phi(t, s) M \Phi(t, s)^\top \leq  \exp\left\{\int_{s}^t \left\|\mathfrak u(v)\right\|^2 \mathrm{d}v\right\}  Z(t-s),\quad s\leq t\leq T,
\end{align}
where $ Z(t)$, $t\in [0, T]$, satisfies the matrix differential equation
\begin{align}\label{eqZ}
 \dot {Z}(t) = \mathcal L\big(Z(t)\big) ,\quad Z(0) = M.
\end{align}
% If there further is a constant $\bar c>0$, so that $\left\|\mathfrak u(v)\right\|\leq \bar c$ for all $v\in[0, T]$, we have   \begin{align}\label{est_phi2}
% \Phi(t, s) M \Phi(t, s)^\top \leq    \bar Z(t-s),\quad s\leq t\leq T,
% \end{align}
% with \begin{align}\label{eqbarZ}
%  \dot {\bar Z}(t) = \bar{\mathcal L}\big(\bar Z(t)\big),\quad \bar Z(0) = M.
% \end{align}
Moreover, the solution of \eqref{eqZ} can be represented by $Z(t)=\mathbb E[\Phi_w(t)M\Phi_w(t)^\top]$, where $\Phi_w$ is defined through \eqref{stoch_fund}.
\end{lem}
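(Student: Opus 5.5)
We will prove \eqref{est_phi} by a comparison argument for matrix differential equations. Fix $s$ and set $X(t):=\Phi(t,s)M\Phi(t,s)^\top$ for $t\geq s$. Writing $N(\mathfrak u\otimes \Phi)=\sum_{k=1}^m \mathfrak u_k N_k\Phi$, the product rule gives
\begin{align*}
\dot X(t) = AX(t)+X(t)A^\top+\sum_{k=1}^m \mathfrak u_k(t)\big(N_kX(t)+X(t)N_k^\top\big),\quad X(s)=M.
\end{align*}

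The key pointwise inequality is obtained by completing a square. For any $\alpha>0$ and positive semidefinite $X$,
\begin{align*}
0\leq \big(\alpha^{-1/2}N_k - \alpha^{1/2}\mathfrak u_k I\big)X\big(\alpha^{-1/2}N_k-\alpha^{1/2}\mathfrak u_k I\big)^\top .
\end{align*}
With $\alpha=1$ this yields
\begin{align*}
\mathfrak u_k(N_kX+XN_k^\top)\leq N_kXN_k^\top+\mathfrak u_k^2X .
\end{align*}
Summing over $k$ and using $N(I\otimes X)N^\top=\sum_k N_kXN_k^\top$, we get
\begin{align*}
\dot X(t)\leq \mathcal L(X(t))+\|\mathfrak u(t)\|^2X(t).
\end{align*}
Now rescale: let $\tilde X(t):=\exp\{-\int_s^t\|\mathfrak u(v)\|^2\dd v\}X(t)$. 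Then $\dot{\tilde X}(t)=\mathcal L(\tilde X(t))-P(t)$ for some positive semidefinite $P(t)$, with $\tilde X(s)=M$. Let $E(t):=Z(t-s)-\tilde X(t)$. It satisfies $\dot E=\mathcal L(E)+P$ with $E(s)=0$.

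The main step is to show that $E(t)\geq 0$, i.e.\ that the flow $e^{\mathcal L t}$ preserves the cone of positive semidefinite matrices. Splitting $\mathcal L=\mathcal L_1+\Pi$ with $\mathcal L_1(X)=AX+XA^\top$ and $\Pi(X)=N(I\otimes X)N^\top$, we have $e^{\mathcal L_1t}X=e^{At}Xe^{A^\top t}\geq0$ for $X\ge 0$. The map $\Pi$ is positive, so by the Dyson/Picard series
\begin{align*}
e^{\mathcal L t}=\sum_{j\geq 0}\int_{\Delta_j(t)} e^{\mathcal L_1(t-r_1)}\Pi e^{\mathcal L_1(r_1-r_2)}\Pi\cdots\Pi e^{\mathcal L_1 r_j}\,\dd r,
\end{align*}
where $\Delta_j(t)$ is the simplex of ordered times $t\geq r_1\geq\dots\geq r_j\geq 0$, the operator $e^{\mathcal L t}$ is a sum of positive maps and hence positive. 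Variation of constants then gives
\begin{align*}
E(t)=\int_s^t e^{\mathcal L(t-r)}P(r)\,\dd r\geq 0 .
\end{align*}
This is the obstacle that needs care: $\mathfrak u\in L^2$ only, so the derivatives exist a.e. We will work with absolutely continuous functions and integral forms throughout, which is harmless here.

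For the stochastic representation, apply Itô's product formula to $\Phi_w M\Phi_w^\top$, where $\dd\Phi_w=A\Phi_w\dd t+\sum_kN_k\Phi_w\dd w_k$. The Itô correction is $\sum_kN_k\Phi_wM\Phi_w^\top N_k^\top\dd t$, which uses the independence of the $w_k$. Taking expectations removes the martingale terms; these are true martingales because $\Phi_w$ has finite moments, as a solution of a linear SDE. The result is that $\mathbb E[\Phi_wM\Phi_w^\top]$ solves \eqref{eqZ}, and uniqueness for this linear ODE identifies it with $Z$. As a byproduct, the representation $Z(t)=\mathbb E[\Phi_w(t)M\Phi_w(t)^\top]$ directly shows $e^{\mathcal Lt}M\geq0$, so the positivity step can alternatively be concluded this way.
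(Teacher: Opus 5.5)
Your proof is correct. The paper does not prove this lemma itself: it cites \cite{h2_bil} for the estimate \eqref{est_phi} and \cite{redmannbenner}, \cite{damm} for the representation $Z(t)=\mathbb E[\Phi_w(t)M\Phi_w(t)^\top]$. So you are giving a self-contained replacement for a citation, not a variant of an argument in the text.

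Your chain of steps is sound:
\begin{itemize}
\item the product rule for $X=\Phi(t,s)M\Phi(t,s)^\top$;
\item completing the square to get $\dot X\leq \mathcal L(X)+\|\mathfrak u\|^2X$;
\item the exponential rescaling and variation of constants;
\item positivity of $\expn^{\mathcal L t}$ on the positive semidefinite cone.
\end{itemize}
You also handle the technical points adequately. For $\mathfrak u\in L^2_T$ you work with absolutely continuous solutions and $L^1$ forcing. In the It\^o computation you use the independence of the $w_k$, and you get the true-martingale property from the finite moments of $\Phi_w$.

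Compared with the bare citation, your route shows where the factor $\exp\{\int_s^t\|\mathfrak u(v)\|^2\dd v\}$ comes from. It also shows that, beyond the pointwise inequality, the only ingredient is positivity of the semigroup generated by $\mathcal L$. As you observe, this positivity follows immediately from the It\^o representation. The Dyson series can therefore be dropped, and the second half of the lemma supplies what the first half needs.

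Keeping $\alpha$ free also shows that the constant is a choice. The same argument gives $\Phi(t,s)M\Phi(t,s)^\top\leq \exp\{\alpha\int_s^t\|\mathfrak u(v)\|^2\dd v\}\,Z_\alpha(t-s)$, where $Z_\alpha$ solves \eqref{eqZ} with $N(I\otimes X)N^\top$ replaced by $\alpha^{-1}N(I\otimes X)N^\top$. This trades growth in $\mathfrak u$ against the stability requirement on the Lyapunov operator. The lemma as stated is the case $\alpha=1$.
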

\begin{proof}
The estimate in \eqref{est_phi} was proved in \cite{h2_bil}. The stochastic solution representation of $Z$ can be found in  \cite{redmannbenner} or  \cite{damm}.
% Exploiting that $\mathfrak u$ is bounded by $\bar c$, we obtain from \eqref{est_phi} that  $\Phi(t, s) M \Phi(t, s)^\top \leq  \exp\left\{\bar c^2(t-s)\right\}  Z(t-s)$. Defining $\bar Z(t)= \exp\left\{\bar c^2t\right\}  Z(t)$, we see that this function satisfies \eqref{eqbarZ} using the product rule.
\end{proof}
% Let $\tau=[\tau_1, \tau_2]_{v_0}\in\mathcal T_i$, $i\geq 2$, be a binary tree with root $v_0$. Then, we define
% \begin{align*}
%  \bar F_{\bullet}(t_\ell) &= \Phi(t_\ell,  0) B\\
% \bar F_{\tau}(t_{v_0}, (t_{v_1})_{v_1\in\tau_1}, (t_{v_2})_{v_2\in\tau_2}) &= \Phi(t_{v_0}, 0) H\big(\bar F_{\tau_1}((t_{v_1})_{v_1\in\tau_1})\otimes \bar F_{\tau_2}((t_{v_2})_{v_2\in\tau_2})\big).
%              \end{align*}
\begin{thm}\label{bound_tree_expression} It holds for all $t\in [0, T]$ that
 \begin{align*}
\sum_{\tau\in\mathcal T_i}\int_{D(\tau; t)} F_{\tau}(t, (t_v)_{v\in\tau})F_{\tau}(t, (t_v)_{v\in\tau})^\top \dd(t_v)_{v\in\tau}\leq  \int_0^T Z_{i, T}(s)\dd s \Big(\exp\Big\{\int_0^T \left\|\mathfrak u(s)\right\|^2 \dd s\Big\}\Big)^{2i-1},\end{align*}
% If the control $\mathfrak u$ is additionally bounded by $\bar c>0$, we have \begin{align*}
%  \sum_{\tau\in\mathcal T_i}\int_{D(\tau; t)} F_{\tau}(t, (t_v)_{v\in\tau})F_{\tau}(t, (t_v)_{v\in\tau})^\top \dd(t_v)_{v\in\tau}\leq \int_0^T \bar Z_{i, T}(s)\dd s,
%  \end{align*}
 where $Z_{1, T}(t)$, $t\geq 0$, satisfies %and $\bar Z_{1, T}$ satisfy
 \begin{align}\label{eq_for_Z1}
 \dot {Z}_{1, T}(t) = \mathcal L\big(Z_{1, T}(t)\big),\quad Z_{1, T}(0) = BB^\top %\quad \text{and}\quad
% \dot {\bar Z}_{1, T}(t) = \bar{\mathcal L}\big(\bar Z_{1, T}(t)\Big) ,\quad \bar Z_{1, T}(0) = BB^\top
\end{align}
with $\mathcal L$ %and $\bar{\mathcal L}$
defined in \eqref{lyap_op}.
Further, $Z_{i, T}(t)$, $t\geq 0$,  %and $\bar Z_{i, T}$,
($i\geq 2$) are the solutions of \begin{align}\label{eq_for_Z2}
 \dot {Z}_{i, T}(t) = \mathcal L\big(Z_{i, T}(t)\big),\quad Z_{i, T}(0) = H\big(\sum_{k=1}^{i-1}\int_0^T Z_{k, T}(s)\dd s\otimes \int_0^T Z_{i-k, T}(s) \dd s\big)H^\top.
%  \dot {\bar Z}_{i, T}(t) &= \bar{\mathcal L}\big(\bar Z_{i, T}(t)\Big),\quad \bar Z_{i, T}(0) = H\big(\sum_{k=1}^{i-1}\int_0^T \bar Z_{k, T}(s)\dd s\otimes \int_0^T \bar Z_{i-k, T}(s) \dd s\big)H^\top.
\end{align}
\end{thm}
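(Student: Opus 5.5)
We prove the statement by strong induction on $i$, using the recursive structure of $F_\tau$ from Theorem \ref{tree_rep} together with Lemma \ref{fund_est}. Write $\mathcal E:=\exp\{\int_0^T\|\mathfrak u(s)\|^2\dd s\}\geq 1$ and $P_i(t):=\sum_{\tau\in\mathcal T_i}\int_{D(\tau;t)}F_\tau F_\tau^\top\dd(t_v)_{v\in\tau}$. The induction hypothesis is $P_k(s)\leq \mathcal E^{2k-1}\int_0^T Z_{k,T}(r)\dd r$ for all $s\in[0,T]$ and all $k<i$. The advantage of this form is that the right-hand side does not depend on $s$.

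\textbf{Base case $i=1$.} Here $P_1(t)=\int_0^t\Phi(t,s)BB^\top\Phi(t,s)^\top\dd s$. Lemma \ref{fund_est} with $M=BB^\top$ gives
\[
P_1(t)\leq \mathcal E\int_0^t Z_{1,T}(t-s)\dd s=\mathcal E\int_0^t Z_{1,T}(r)\dd r .
\]
Since $Z_{1,T}(r)=\mathbb E[\Phi_w(r)BB^\top\Phi_w(r)^\top]\geq 0$ by the stochastic representation, extending the integral to $[0,T]$ only enlarges it.

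\textbf{Inductive step.} Fix $\tau=[\tau_1,\tau_2]_{v_0}$. The domain factorizes as $D(\tau;t)=\{0\leq t_{v_0}\leq t,\ D(\tau_1;t_{v_0})\times D(\tau_2;t_{v_0})\}$. Using the mixed-product rule $(F_1\otimes F_2)(F_1\otimes F_2)^\top=F_1F_1^\top\otimes F_2F_2^\top$, Fubini, and the bilinearity of $\otimes$, summing over $\tau_1\in\mathcal T_k$, $\tau_2\in\mathcal T_{i-k}$ and $k$ gives
\[
P_i(t)=\int_0^t\Phi(t,s)\,H\Big(\sum_{k=1}^{i-1}P_k(s)\otimes P_{i-k}(s)\Big)H^\top\,\Phi(t,s)^\top\dd s .
\]
Next we insert the induction hypothesis. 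This needs the monotonicity fact that for $0\leq A_1\leq B_1$ and $0\leq A_2\leq B_2$ we have $A_1\otimes A_2\leq B_1\otimes B_2$. It follows by splitting $B_1\otimes B_2-A_1\otimes A_2=(B_1-A_1)\otimes B_2+A_1\otimes(B_2-A_2)$, since Kronecker products of PSD matrices are PSD. Congruence with $H$ and with $\Phi(t,s)$ preserves the order, so the middle factor is bounded by
\[
\mathcal E^{2k-1}\mathcal E^{2(i-k)-1}\,Z_{i,T}(0)=\mathcal E^{2i-2}Z_{i,T}(0),
\]
which is $s$-independent. Applying Lemma \ref{fund_est} with $M=Z_{i,T}(0)\geq 0$ yields
\[
P_i(t)\leq \mathcal E^{2i-2}\,\mathcal E\int_0^t Z_{i,T}(t-s)\dd s\leq \mathcal E^{2i-1}\int_0^T Z_{i,T}(r)\dd r,
\]
where the last step again uses $Z_{i,T}\geq 0$.

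\textbf{Main obstacle.} The delicate points are bookkeeping rather than deep. First, the induction must carry the time-uniform bound over $[0,T]$, not just at the fixed $t$; this is why the theorem uses $\int_0^T$ and the exponent $2i-1$ adds up as $(2k-1)+(2(i-k)-1)+1$. Second, one must justify positivity and monotonicity of the Kronecker product in the Loewner order. Third, one must check that each $Z_{k,T}$ stays PSD, which is immediate from the representation $Z=\mathbb E[\Phi_w M\Phi_w^\top]$ in Lemma \ref{fund_est}.
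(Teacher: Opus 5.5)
Your proposal is correct and follows the paper's proof essentially step by step. Both use strong induction on $i$ with a base case from Lemma \ref{fund_est} applied to $M=BB^\top$. The inductive step likewise matches: the root decomposition of $D(\tau;t)$, the mixed-product rule, Loewner monotonicity of the Kronecker product applied to the time-uniform induction hypothesis, and a second application of Lemma \ref{fund_est} with $M=Z_{i,T}(0)$. Your splitting argument for Kronecker monotonicity and your positive-semidefiniteness check for $Z_{i,T}$ only make explicit two facts the paper uses without proof.
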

\begin{proof}
Let us prove the result for $\mathcal T_1=\{\bullet\}$ first. We find that
{\allowdisplaybreaks\begin{align*}
&\sum_{\tau\in\mathcal T_1}\int_{D(\tau; t)} F_{\tau}(t, (t_v)_{v\in\tau})F_{\tau}(t, (t_v)_{v\in\tau})^\top \dd(t_v)_{v\in\tau} = \int_0^t\Phi(t, t_\ell) BB^\top \Phi(t, t_\ell)^\top \dd t_\ell \\
&\leq \int_0^t  \exp\left\{\int_{t_\ell}^t \left\|\mathfrak u(v)\right\|^2 \mathrm{d}v\right\}  Z_{1, T}(t-t_\ell)\dd t_\ell \leq  \exp\left\{\int_{0}^t \left\|\mathfrak u(v)\right\|^2 \mathrm{d}v\right\} \int_0^t  Z_{1, T}(t_\ell)\dd t_\ell
\end{align*}
using Lemma \ref{fund_est} and variable substitution.
% Assuming a bounded control, we obtain \begin{align*}
% &\sum_{\tau\in\mathcal T_1}\int_{D(\tau; t)} F_{\tau}(t, (t_v)_{v\in\tau})F_{\tau}(t, (t_v)_{v\in\tau})^\top \dd(t_v)_{v\in\tau} = \int_0^t\Phi(t, t_\ell) BB^\top \Phi(t, t_\ell)^\top \dd t_\ell \\
% &\leq \int_0^t  \bar Z_{1, T}(t-t_\ell)\dd t_\ell \leq \int_0^t  \bar Z_{1, T}(t_\ell)\dd t_\ell.
% \end{align*}
Now, let the statement of this theorem be true for all sets of binary trees $\mathcal T_k$ with $k=1, \dots, i-1$. For the induction step, let $\tau=[\tau_1, \tau_2]_{v_0}\in\mathcal T_i$ be a tree with root $v_0$. Then, \begin{align*}
& \sum_{\tau\in\mathcal T_i}\int_{D(\tau; t)} F_{\tau}(t, (t_v)_{v\in\tau})F_{\tau}(t, (t_v)_{v\in\tau})^\top \dd(t_v)_{v\in\tau} =   \sum_{k=1}^{i-1}\sum_{\tau_1\in\mathcal T_k}\sum_{\tau_2\in\mathcal T_{i-k}} \int_0^t \int_{D(\tau_2; t_{v_0})} \int_{D(\tau_1; t_{v_0})}\\
 &    F_{\tau}(t,  t_{v_0}, (t_{v_1})_{v_1\in\tau_1}, (t_{v_2})_{v_2\in\tau_2})F_{\tau}(t, t_{v_0}, (t_{v_1})_{v_1\in\tau_1}, (t_{v_2})_{v_2\in\tau_2})^\top \dd(t_{v_1})_{v_1\in\tau_1} \dd(t_{v_2})_{v_2\in\tau_2} \dd t_{v_0}\\
 &=
 \sum_{k=1}^{i-1}\sum_{\tau_1\in\mathcal T_k}\sum_{\tau_2\in\mathcal T_{i-k}} \int_0^t \int_{D(\tau_2; t_{v_0})} \int_{D(\tau_1; t_{v_0})} \Phi(t, t_{v_0}) H\Big(F_{\tau_1}(t_{v_0}, (t_{v_1})_{v_1\in\tau_1} F_{\tau_1}(t_{v_0}, (t_{v_1})_{v_1\in\tau_1})^\top\\
 &    \quad\otimes F_{\tau_2}(t_{v_0}, (t_{v_2})_{v_2\in\tau_2}) F_{\tau_2}(t_{v_0}, (t_{v_2})_{v_2\in\tau_2})^\top\Big)H^\top  \Phi(t, t_{v_0})^\top\dd(t_{v_1})_{v_1\in\tau_1} \dd(t_{v_2})_{v_2\in\tau_2} \dd t_{v_0} \\
 &=
 \sum_{k=1}^{i-1} \int_0^t  \Phi(t, t_{v_0}) H\Big(\sum_{\tau_1\in\mathcal T_k} \int_{D(\tau_1; t_{v_0})}  F_{\tau_1}(t_{v_0}, (t_{v_1})_{v_1\in\tau_1} F_{\tau_1}(t_{v_0}, (t_{v_1})_{v_1\in\tau_1})^\top \dd(t_{v_1})_{v_1\in\tau_1}\\
 &    \quad\otimes \sum_{\tau_2\in\mathcal T_{i-k}} \int_{D(\tau_2; t_{v_0})} F_{\tau_2}(t_{v_0}, (t_{v_2})_{v_2\in\tau_2}) F_{\tau_2}(t_{v_0}, (t_{v_2})_{v_2\in\tau_2})^\top \dd(t_{v_2})_{v_2\in\tau_2}  \Big)H^\top  \Phi(t, t_{v_0})^\top \dd t_{v_0}.
 \end{align*}
 We apply the induction hypothesis together with the monotonicity of the Kronecker product. Specifically, if
$M_1,\bar M_1,M_2,\bar M_2$
are positive semidefinite and
$M_1\leq \bar M_1$, $M_2\leq \bar M_2$,
then
\begin{align*}
M_1\otimes M_2\leq \bar M_1\otimes M_2,
\qquad
M_1\otimes M_2\leq M_1\otimes \bar M_2.
\end{align*}
This yields \begin{align*}
& \sum_{\tau\in\mathcal T_i}\int_{D(\tau; t)} F_{\tau}(t, (t_v)_{v\in\tau})F_{\tau}(t, (t_v)_{v\in\tau})^\top \dd(t_v)_{v\in\tau} \\
 &\leq
 \sum_{k=1}^{i-1} \int_0^t  \Phi(t, t_{v_0}) H\Big(\int_0^T Z_{k, T}(s)\dd s \Big(\exp\Big\{\int_0^T \left\|\mathfrak u(s)\right\|^2 \dd s\Big\}\Big)^{2k-1}\\
 &    \quad\otimes \int_0^T Z_{i-k, T}(s)\dd s \Big(\exp\Big\{\int_0^T \left\|\mathfrak u(s)\right\|^2 \dd s\Big\}\Big)^{2(i-k)-1} \Big)H^\top  \Phi(t, t_{v_0})^\top \dd t_{v_0}\\
 &=
 \Big(\exp\Big\{\int_0^T \left\|\mathfrak u(s)\right\|^2 \dd s\Big\}\Big)^{2i-2}\int_0^t  \Phi(t, t_{v_0}) H\Big(\sum_{k=1}^{i-1} \int_0^T Z_{k, T}(s)\dd s
 \otimes \int_0^T Z_{i-k, T}(s)\dd s\Big) H^\top  \Phi(t, t_{v_0})^\top \dd t_{v_0}\\
 &\leq \Big(\exp\Big\{\int_0^T \left\|\mathfrak u(s)\right\|^2 \dd s\Big\}\Big)^{2i-2}\int_0^t  \exp\left\{\int_{t_{v_0}}^t \left\|\mathfrak u(s)\right\|^2 \mathrm{d}s\right\}  Z_{i, T}(t-t_{v_0})\dd t_{v_0}\\
 &\leq \Big(\exp\Big\{\int_0^T \left\|\mathfrak u(s)\right\|^2 \dd s\Big\}\Big)^{2i-1}\int_0^T  Z_{i, T}(t_{v_0})\dd t_{v_0}
\end{align*}
exploiting Lemma \ref{fund_est},  variable substitution and the definition of $Z_{i, T}$.
% Further, if $\mathfrak u$ is bounded by $\bar c>0$, we obtain  \begin{align*}
% & \sum_{\tau\in\mathcal T_i}\int_{D(\tau; t)} F_{\tau}(t, (t_v)_{v\in\tau})F_{\tau}(t, (t_v)_{v\in\tau})^\top \dd(t_v)_{v\in\tau} \\
%  &\leq
%  \sum_{k=1}^{i-1} \int_0^t  \Phi(t, t_{v_0}) H\Big(\int_0^T \bar Z_{k, T}(s)\dd s \otimes \int_0^T \bar Z_{i-k, T}(s)\dd s  \Big)H^\top  \Phi(t, t_{v_0})^\top \dd t_{v_0}\\
%  &=
%  \int_0^t  \Phi(t, t_{v_0}) H\Big(\sum_{k=1}^{i-1} \int_0^T \bar Z_{k, T}(s)\dd s
%  \otimes \int_0^T \bar Z_{i-k, T}(s)\dd s\Big) H^\top  \Phi(t, t_{v_0})^\top \dd t_{v_0}\\
%  &\leq \int_0^t  \bar Z_{i, T}(t-t_{v_0})\dd t_{v_0} \leq \int_0^T  \bar Z_{i, T}(t_{v_0})\dd t_{v_0}
% \end{align*}
% using Lemma \ref{fund_est},  variable substitution and the definition of $\bar Z_i$.
}
\end{proof}
Below, let us illustrate that $\int_0^T Z_{i, T}(s)\dd s$ %and $\int_0^T \bar Z_{i, T}(s)\dd s$
in Theorem \ref{bound_tree_expression} can be replaced by their limits as $T\to\infty$. This limit is denoted by $\int_0^\infty Z_{i, \infty}(s)\dd s$, where $Z_{i, \infty}$ solves equations \eqref{eq_for_Z1} and \eqref{eq_for_Z2} when $T$ is replaced by $\infty$ in the initial states.
\begin{prop}\label{prop_lim_upper_bound}
 Suppose that $\int_0^\infty Z_{i, \infty}(s)\dd s$ %and $\int_0^\infty \bar Z_{i, \infty}(s)\dd s$
 exist. Then, we have $\int_0^T Z_{i, T}(s)\dd s\leq \int_0^\infty Z_{i, \infty}(s)\dd s$.  %and $\int_0^T \bar Z_{i, T}(s)\dd s\leq\int_0^\infty \bar Z_{i, \infty}(s)\dd s$.
 Moreover, it holds that \begin{align*}
\lim_{T\to \infty}\int_0^T Z_{i, T}(s)\dd s=\int_0^\infty Z_{i, \infty}(s)\dd s.% \quad    \lim_{T\to \infty}\int_0^T \bar Z_{i, T}(s)\dd s=\int_0^\infty \bar Z_{i, \infty}(s)\dd s.
\end{align*}
\end{prop}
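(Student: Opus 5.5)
Our plan is an induction on $i$ that rests on two facts. First, $\mathcal L$ is linear and generates a positive semigroup: if $Z$ solves $\dot Z=\mathcal L(Z)$ with $Z(0)=M\geq 0$, then $Z(t)=\mathbb E[\Phi_w(t)M\Phi_w(t)^\top]\geq 0$ by Lemma \ref{fund_est}. Second, if $M\leq \bar M$, then by linearity the solutions $Z,\bar Z$ with these initial values satisfy $\bar Z(t)-Z(t)=\mathbb E[\Phi_w(t)(\bar M-M)\Phi_w(t)^\top]\geq 0$ for all $t$. Write $P_{i,T}:=\int_0^T Z_{i,T}(s)\dd s$ and $P_{i}:=\int_0^\infty Z_{i,\infty}(s)\dd s$. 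Note that $Z_{i,T}$ and $Z_{i,\infty}$ solve the same linear equation and differ only in their initial values.

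For the inequality, consider $i=1$ first. Here $Z_{1,T}=Z_{1,\infty}$ is positive semidefinite, so $P_{1,T}\leq P_{1}$ because the integrand on $[T,\infty)$ is positive semidefinite. For the induction step, assume $0\leq P_{k,T}\leq P_k$ for all $k<i$. Monotonicity of the Kronecker product for positive semidefinite matrices, applied twice as in the proof of Theorem \ref{bound_tree_expression}, gives $P_{k,T}\otimes P_{i-k,T}\leq P_k\otimes P_{i-k}$. Conjugating by $H$ and summing over $k$ yields $Z_{i,T}(0)\leq Z_{i,\infty}(0)$. By the comparison principle, $Z_{i,T}(s)\leq Z_{i,\infty}(s)$ for all $s$. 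Integrating over $[0,T]$ and using $Z_{i,\infty}\geq 0$ on $[T,\infty)$ then gives $P_{i,T}\leq P_i$.

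For the limit we again use induction. Assume $P_{k,T}\to P_k$ for $k<i$. Since the Kronecker product is continuous, $Z_{i,T}(0)\to Z_{i,\infty}(0)$. Let $W_T$ solve $\dot W=\mathcal L(W)$ with $W_T(0)=Z_{i,\infty}(0)-Z_{i,T}(0)\geq 0$. By linearity,
\begin{align*}
0\leq P_i-P_{i,T}=\int_T^\infty Z_{i,\infty}(s)\dd s+\int_0^T W_T(s)\dd s.
\end{align*}
The first term tends to $0$ because the improper integral exists.

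The second term is the main obstacle, because we need a uniform bound on $\int_0^T W_T$. We handle it with monotonicity and linearity again. Let $S$ solve $\dot S=\mathcal L(S)$ with $S(0)=I$, and write $\varepsilon_T:=\|W_T(0)\|\to 0$. Then $W_T(0)\leq \varepsilon_T I$, so $\int_0^T W_T\leq \varepsilon_T\int_0^T S(s)\dd s$. It remains to know that $\int_0^\infty S(s)\dd s$ is finite. This requires the semigroup $\expn^{t\mathcal L}$ to be integrable on all initial values, which is what we expect the assumed existence of the $P_i$ to encode; for instance, it follows from $\sigma(\mathcal L)\subset\mathbb C_-$. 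If only existence of $P_i$ for the given initial values is available, we would instead exploit $W_T(0)\leq Z_{i,\infty}(0)$. Dominated convergence then applies to $\int_0^\infty \mathbf 1_{[0,T]}(s)W_T(s)\dd s$, with integrable dominant $Z_{i,\infty}$ and pointwise limit $W_T(s)\to 0$ for each fixed $s$, since the initial values go to $0$ and $\mathcal L$ is linear. This second route avoids any spectral assumption, so we would use it. Either way $P_{i,T}\to P_i$, which completes the induction.
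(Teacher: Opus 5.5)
Your argument is correct. The first half, the ordering $\int_0^T Z_{i,T}(s)\dd s\leq\int_0^\infty Z_{i,\infty}(s)\dd s$, is exactly the paper's argument. It uses induction, Kronecker monotonicity to get $Z_{i,T}(0)\leq Z_{i,\infty}(0)$, and positivity of $M\mapsto\mathbb E[\Phi_w(s)M\Phi_w(s)^\top]$ to propagate this ordering to all $s$.

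For the limit you take a different route. The paper writes $Z_{i,T}(s)=\expn^{\mathcal L s}Z_{i,T}(0)$ and passes to the limit in $\int_0^T\expn^{\mathcal L s}\dd s\,Z_{i,T}(0)$ by taking the limits of the operator and the matrix separately. That step tacitly requires $\int_0^\infty\expn^{\mathcal L s}\dd s$ to exist as an operator, i.e.\ $\sigma(\mathcal L)\subset\mathbb C_-$. This is essentially your first route via $S$, and stability is not among the hypotheses of the proposition. Your second route splits $P_i-P_{i,T}=\int_T^\infty Z_{i,\infty}(s)\dd s+\int_0^T W_T(s)\dd s$ and applies dominated convergence, using $0\leq W_T(s)\leq Z_{i,\infty}(s)$ and $W_T(s)\to 0$ for each fixed $s$. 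It needs only the assumed existence of the integrals $\int_0^\infty Z_{k,\infty}(s)\dd s$. So it proves the statement exactly as formulated and justifies the interchange of limit and integral that the paper glosses over, at the price of a few extra lines.

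To make the domination fully precise, use the scalar bound $\|W_T(s)\|\leq\trace\big(Z_{i,\infty}(s)\big)$. This function is nonnegative and has a convergent improper integral, hence it is Lebesgue integrable.
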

\begin{proof}
 The claim follows for $i=1$, since $Z_{1, T}(s)$ is a positive semidefinite matrix and independent of $T$. Therefore,  we obtain $Z_{1, T}(s)=Z_{1, \infty}(s)$ and hence $\int_0^T Z_{1, T}(s)\dd s\leq \int_0^\infty Z_{1, \infty}(s)\dd s$. Let us now assume that $\int_0^T Z_{k, T}(s)\dd s\leq \int_0^\infty Z_{k, \infty}(s)\dd s$ is true for all $k=1, \dots, i-1$. We can represent $Z$ based on the stochastic fundamental solution defined in \eqref{stoch_fund}. Thus,
 \begin{align*}
 Z_{i, T}(s)&= \mathbb E[\Phi_w(s) Z_{i, T}(0)\Phi_w(s)^\top]=   \mathbb E[\Phi_w(s)H\big(\sum_{k=1}^{i-1}\int_0^T Z_{k, T}(s)\dd s\otimes \int_0^T Z_{i-k, T}(s) \dd s\big)H^\top\Phi_w(s)^\top]\\
 &\leq \mathbb E[\Phi_w(s)H\big(\sum_{k=1}^{i-1}\int_0^\infty Z_{k, \infty}(s)\dd s\otimes \int_0^\infty Z_{i-k, \infty}(s) \dd s\big)H^\top\Phi_w(s)^\top]= Z_{i, \infty}(s).
 \end{align*}
Consequently, we obtain $\int_0^T Z_{i, T}(s)\dd s\leq \int_0^\infty Z_{i, \infty}(s)\dd s$. Moreover, we see that $\lim_{T\to\infty}\int_0^T Z_{1, T}(s)\dd s = \int_0^\infty Z_{1, \infty}(s)\dd s$. Now, we assume that $\lim_{T\to\infty}\int_0^T Z_{k, T}(s)\dd s = \int_0^\infty Z_{k, \infty}(s)\dd s$ holds for all $k=1, \dots, i-1$. We exploit that $Z_{i, T}(s)=\expn^{\mathcal L s}Z_{i, T}(0)$ and $Z_{i, \infty}(s)=\expn^{\mathcal L s}Z_{i, \infty}(0)$ in the induction step. Consequently, we find that \begin{align*}
 \lim_{T\to \infty}\int_0^T Z_{i, T}(s)\dd s &= \lim_{T\to \infty}\int_0^T \expn^{\mathcal L s} \dd s \, H\big(\sum_{k=1}^{i-1}\int_0^T Z_{k, T}(s)\dd s\otimes \int_0^T Z_{i-k, T}(s) \dd s  \big)H^\top\\
 &=\int_0^\infty \expn^{\mathcal L s} \dd s\,  H\big(\sum_{k=1}^{i-1}\int_0^\infty Z_{k, \infty}(s)\dd s\otimes \int_0^\infty Z_{i-k, \infty}(s) \dd s\big)H^\top\\
 &=\int_0^\infty \expn^{\mathcal L s} \dd s\, Z_{i, \infty}(0)=\int_0^\infty Z_{i, \infty}(s) \dd s.
 \end{align*}
 This concludes the proof.
 % The proof for $\bar Z$ follows the same steps and is therefore omitted.
\end{proof}
In the following, let us briefly discuss the existence of $\int_0^\infty Z_{i, \infty}(s)\dd s$. %and $\int_0^\infty \bar Z_{i, \infty}(s)\dd s$, respectively.
\begin{prop}\label{int_Zi_exists}
 Let the solution $Z$ of \eqref{eqZ} decay exponentially for $Z(0)= B B^\top$ and $Z(0)= H H^\top$. Then, the integrals $\int_0^\infty Z_{i, \infty}(s)\dd s$ exist.
%  If the solution $\bar Z$ of \eqref{eqbarZ} decays exponentially for $\bar Z(0)= B B^\top$ and $\bar Z(0)= H H^\top$, the integrals $\int_0^\infty \bar Z_{i, \infty}(s)\dd s$ exist.
\end{prop}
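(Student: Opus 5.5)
We argue by induction on $i$, using that $\mathcal L$ is linear and positive: $X\geq 0$ implies $\expn^{\mathcal L s}X\geq 0$, since $\expn^{\mathcal L s}X=\mathbb E[\Phi_w(s)X\Phi_w(s)^\top]$ by Lemma \ref{fund_est}. For the same reason $\expn^{\mathcal L s}$ is monotone: $X\leq Y$ implies $\expn^{\mathcal L s}X\leq \expn^{\mathcal L s}Y$. For $i=1$ we have $Z_{1,\infty}(s)=\expn^{\mathcal L s}BB^\top$, which decays exponentially by assumption. Hence $P_1:=\int_0^\infty Z_{1,\infty}(s)\dd s$ exists and is positive semidefinite.

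For the induction step, assume $P_k:=\int_0^\infty Z_{k,\infty}(s)\dd s$ exists and is positive semidefinite for $k=1,\dots,i-1$. Then
\begin{align*}
Z_{i,\infty}(0)=H\Big(\sum_{k=1}^{i-1}P_k\otimes P_{i-k}\Big)H^\top
\end{align*}
is a finite, well-defined positive semidefinite matrix. Set $\alpha_i:=\big\|\sum_{k=1}^{i-1}P_k\otimes P_{i-k}\big\|$ (spectral norm). Then $\sum_k P_k\otimes P_{i-k}\leq \alpha_i I_{n^2}$, and therefore
\begin{align*}
Z_{i,\infty}(0)\leq \alpha_i HH^\top .
\end{align*}

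By monotonicity and positivity of $\expn^{\mathcal L s}$, we obtain $0\leq Z_{i,\infty}(s)\leq \alpha_i\,\expn^{\mathcal L s}HH^\top$. The right-hand side decays exponentially by the second assumption, i.e., $\|\expn^{\mathcal L s}HH^\top\|\leq c\,\expn^{-\beta s}$. For positive semidefinite matrices $0\leq X\leq Y$ implies $\|X\|\leq\|Y\|$. Hence $\|Z_{i,\infty}(s)\|\leq \alpha_i c\,\expn^{-\beta s}$, the integral $P_i$ converges absolutely, and it is positive semidefinite. This completes the induction.

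The only delicate point is justifying the positivity and monotonicity of $\expn^{\mathcal L s}$. I would obtain it directly from the stochastic representation $Z(t)=\mathbb E[\Phi_w(t)M\Phi_w(t)^\top]$ in Lemma \ref{fund_est}, which is linear in $M$ and maps positive semidefinite matrices to positive semidefinite ones. Everything else is bookkeeping. If one also wants quantitative control, for example to sum over $i$ later, one could track $\alpha_i$ through a Catalan-type recursion, but that is not needed for mere existence.
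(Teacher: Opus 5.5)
Your proposal is correct and is essentially the paper's proof. Both argue by induction on $i$, bound $Z_{i,\infty}(0)\leq \lambda_{\max} HH^\top$ using the largest eigenvalue of $\sum_k P_k\otimes P_{i-k}$, and push this bound through $Z_{i,\infty}(s)=\mathbb E[\Phi_w(s)Z_{i,\infty}(0)\Phi_w(s)^\top]$ to the exponentially decaying solution with initial value $HH^\top$. Your remarks on the positivity and monotonicity of $\expn^{\mathcal L s}$, and on $\|X\|\leq\|Y\|$ for $0\leq X\leq Y$, simply make explicit steps the paper leaves implicit.
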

\begin{proof}
By assumption, $Z_{1, \infty}$ decays exponentially. Hence, $\int_0^\infty Z_{1, \infty}(s)\dd s$ exists. Now, let all integrals $\int_0^\infty Z_{k, \infty}(s)\dd s$ exist for $k\in\{1, \dots, i-1\}$. Then, we obtain that $Z_{i, \infty}(0)=H\big(\sum_{k=1}^{i-1}\int_0^\infty Z_{k, \infty}(s)\dd s\otimes \int_0^\infty Z_{i-k, \infty}(s) \dd s\big)H^\top$ exists. Let $\lambda_{\max}\geq 0$ be the largest eigenvalue of $\sum_{k=1}^{i-1}\int_0^\infty Z_{k, \infty}(s)\dd s\otimes \int_0^\infty Z_{i-k, \infty}(s) \dd s$. Consequently, we find that \begin{align*}
 Z_{i, \infty}(s)&= \mathbb E[\Phi_w(s) Z_{i, \infty}(0)\Phi_w(s)^\top] \leq \lambda_{\max}\mathbb E[\Phi_w(s) H H^\top\Phi_w(s)^\top].
 \end{align*}
$Z=\mathbb E[\Phi_w H H^\top\Phi_w^\top]$ is the solution of \eqref{eqZ} with $Z(0)=H H^\top$. This function decays exponentially by assumption. Therefore, $\int_0^\infty Z_{i, \infty}(s)\dd s$ is well-defined.
%We omit the proof for $\bar Z$, since it is using exactly the same arguments.
\end{proof}
\begin{remark}\label{remark_stability}
We meet the assumptions of Proposition \ref{int_Zi_exists}, if  \eqref{eqZ} %and \eqref{eqbarZ}
is asymptotically stable, i.e.,  $\sigma({\mathcal L})\subset \mathbb C_-$, %(or $\sigma(\bar{\mathcal L})\subset \mathbb C_-$)
 where $\sigma(\cdot)$ denotes the spectrum of an operator or a matrix.  Notice that this stability condition is equivalent to \begin{align}\label{stab_cond}
 \sigma\big(A\otimes I + I \otimes A +\sum_{k=1}^m N_k\otimes N_k\big)\subset \mathbb C_-                                                                                                                                                                                                                                                                                                                                                                                                                                                                                                                                                                                                                                                                                                                                                                                                                                                                                                                                                                                                                                                                                                                                                                                                                                                                                                                                                                                                                                                                                                                                                                                                                                                                                                                                                                                                                                                                                                                                      \end{align}
with $A\otimes I + I \otimes A +\sum_{k=1}^m N_k\otimes N_k$ being the matrix representation of $\mathcal L$. However, Proposition \ref{int_Zi_exists} tells us that we actually need less than \eqref{stab_cond}. For simplicity, we will always work with \eqref{stab_cond} in the following. Notice that \eqref{stab_cond} is equivalent to the exponential decay of $\mathbb E \|\Phi_w(t)\|^2$, $t\geq 0$, where the stochastic fundamental solution $\Phi_w$ is defined in \eqref{stoch_fund}. We refer to \cite{damm} for more details.
\end{remark}
We introduce reachability Gramians below and use the above result to link this Gramian to dominant subspaces of the quadratic equation \eqref{stochstatenew}. Notice that our approach involves solutions of matrix differential equations while \cite{morBenG24, nonlinear_irka} rely on Volterra kernels.
\begin{defn}\label{def_P}
 Let the matrix-valued functions $Z_{i, T}$, %and $\bar Z_{i, T}$,
 $i\geq 1$, be defined as the solutions of \eqref{eq_for_Z1} and \eqref{eq_for_Z2}. Moreover, let $Z_{i, \infty}$ %and $\bar Z_{i, \infty}$
 be the functions being the solutions of \eqref{eq_for_Z1} and \eqref{eq_for_Z2} when replacing $T$ by $\infty$ in the initial values. Then, the time-limited reachability Gramian $P_T$ and the (infinite time) reachability Gramian $P$ are defined by \begin{align*}
 P_T:=\sum_{i=1}^\infty\int_0^T Z_{i, T}(s)\dd s,\quad     P:=\sum_{i=1}^\infty\int_0^\infty Z_{i, \infty}(s)\dd s.                                                                                                                                                                                                                                                                                                                                                                                                                                                                           \end{align*}
\end{defn}
% \begin{remark}\label{remark_alternative_gram}
% If the control $\mathfrak{u}$ is additionally bounded by a constant $\bar c>0$, we can use alternative Gramians $\bar P_T$ and $\bar P$ defined by \begin{align*}
%  \bar P_T:=\sum_{i=1}^\infty\int_0^T \bar Z_{i, T}(s)\dd s,\quad     \bar P:=\sum_{i=1}^\infty\int_0^\infty \bar Z_{i, \infty}(s)\dd s.                                                                                                                                                                                                                                                                                                                                                                                                                                                                           \end{align*}
% All following results can also be established for $\bar P_T$ and $\bar P$, respectively. The gain of these Gramians is that the exponential $\exp\Big\{\int_0^T \left\|\mathfrak u(s)\right\|^2 \dd s\Big\}$ can be replace by $1$ in all the estimates, at the cost of having to deal with a shifted Lyapunov operator $\bar{\mathcal L}$ involving $A_{\bar c}:=A+I \bar c^2/2$ instead of $A$. This is a consequence of the second matrix inequality in Theorem \ref{bound_tree_expression}.  We omit all further details in this direction.
% \end{remark}
Now, we obtain the following result which is an output bound based on the Gramians introduced in Definition \ref{def_P}.
\begin{thm}\label{thm_gramian_nased bound}
Given the sequence of bilinear systems in \eqref{eq_x1} and \eqref{eq_xi}. Then, we have
\begin{equation}\label{Gramian_based_bound}
 \begin{aligned}
&\sup_{t\in[0, T]}\left\|C \sum_{i=1}^\infty x_i(t)\right\|\\
&\leq  \sqrt{\trace(C P_T C^\top)}\sqrt{\sum_{i=1}^\infty \Big(\exp\Big\{\int_0^T \left\|\mathfrak u(s)\right\|^2 \dd s\Big\}\Big)^{2i-1}\sum_{\tau\in\mathcal T_i}\int_{D(\tau; T)}\prod_{\ell\in\leaves(\tau)}\|u(t_\ell)\|^2\dd(t_v)_{v\in\tau}}.
\end{aligned}
\end{equation}
The inequality in \eqref{Gramian_based_bound} also holds if we replace $P_T$ by $P$.
% If $\mathfrak{u}$ is further bounded,  we obtain \begin{equation}\label{Gramian_based_bound2}
%  \begin{aligned}
% \sup_{t\in[0, T]}\left\|C \sum_{i=1}^\infty x_i(t)\right\|
% \leq  \sqrt{\trace(C \bar P_T C^\top)}\sqrt{\sum_{i=1}^\infty \sum_{\tau\in\mathcal T_i}\int_{D(\tau; T)}\prod_{\ell\in\leaves(\tau)}\|u(t_\ell)\|^2\dd(t_v)_{v\in\tau}}
% \end{aligned}
% \end{equation}
% with $\bar P_T$ introduced in Remark \ref{remark_alternative_gram}. Again, $\bar P_T$ can be replaced by $\bar P$ in \eqref{Gramian_based_bound2}.
\end{thm}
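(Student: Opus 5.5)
Fix $t\in[0,T]$ and abbreviate $E:=\exp\{\int_0^T\|\mathfrak u(s)\|^2\dd s\}$. The plan is to estimate each $Cx_i(t)$ separately with Corollary \ref{kor_first_bound}, bound its first factor with Theorem \ref{bound_tree_expression}, and then sum over $i$ using Cauchy--Schwarz for series. If the right-hand side of \eqref{Gramian_based_bound} is infinite there is nothing to prove, so we may assume it is finite.

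First, by the triangle inequality,
\begin{align*}
\Big\|C\sum_{i=1}^\infty x_i(t)\Big\|\leq \sum_{i=1}^\infty \|Cx_i(t)\|,
\end{align*}
where the left-hand side is understood as the limit of the partial sums. For each $i$, Corollary \ref{kor_first_bound} together with \eqref{rel_via_Frob} bounds $\|Cx_i(t)\|$ by
\begin{align*}
\sqrt{\trace\big(C\, G_i(t)\, C^\top\big)}\,\sqrt{U_i(t)},
\end{align*}
where $G_i(t)$ denotes the tree-sum matrix appearing in Theorem \ref{bound_tree_expression} and $U_i(t)=\sum_{\tau\in\mathcal T_i}\int_{D(\tau;t)}\prod_{\ell\in\leaves(\tau)}\|u(t_\ell)\|^2\dd(t_v)_{v\in\tau}$. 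Theorem \ref{bound_tree_expression} gives $G_i(t)\leq E^{2i-1}\int_0^T Z_{i,T}(s)\dd s$. Since $X\mapsto \trace(CXC^\top)$ is monotone with respect to the Loewner order, it follows that
\begin{align*}
\|Cx_i(t)\|\leq \sqrt{\trace\Big(C\int_0^T Z_{i,T}(s)\dd s\, C^\top\Big)}\,\sqrt{E^{2i-1}U_i(t)}.
\end{align*}
Here I move the factor $E^{2i-1}$ from the Gramian factor onto the input factor.

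Second, I sum over $i$ and apply the discrete Cauchy--Schwarz inequality $\sum a_ib_i\leq(\sum a_i^2)^{1/2}(\sum b_i^2)^{1/2}$. Linearity of the trace then gives $\sum_i\trace(C\int_0^T Z_{i,T}\,C^\top)=\trace(CP_TC^\top)$. This interchange of summation and trace is legitimate because all terms are nonnegative, since every $Z_{i,T}$ is positive semidefinite. Monotonicity in $t$ of the input factor follows from $D(\tau;t)\subset D(\tau;T)$ and the nonnegativity of the integrand, so $U_i(t)\leq U_i(T)$. The resulting bound is uniform in $t$, and taking the supremum over $t\in[0,T]$ yields \eqref{Gramian_based_bound}.

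For the version with $P$, Proposition \ref{prop_lim_upper_bound} gives $\int_0^T Z_{i,T}\leq\int_0^\infty Z_{i,\infty}$ for each $i$. Summing over $i$ gives $P_T\leq P$, and hence $\trace(CP_TC^\top)\leq\trace(CP C^\top)$. This uses the implicit assumption that the integrals defining $P$ exist; otherwise the right-hand side is infinite and the claim is trivial.

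The main point needing care is the bookkeeping in the first step: the $i$-dependent factor $E^{2i-1}$ must be attached to the input term so that the Gramian factor becomes exactly $P_T$. A further subtlety is whether the series $\sum_i x_i$ converges at all. I would handle this by noting that the argument above bounds $\sum_i\|Cx_i(t)\|$ absolutely, so the statement is meaningful whenever the right-hand side is finite.
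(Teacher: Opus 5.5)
Your proposal is correct and follows essentially the same route as the paper: triangle inequality, Corollary \ref{kor_first_bound} with \eqref{rel_via_Frob}, Theorem \ref{bound_tree_expression} with the factor $\big(\exp\{\int_0^T\|\mathfrak u(s)\|^2\dd s\}\big)^{2i-1}$ moved onto the input term, the inclusion $D(\tau;t)\subseteq D(\tau;T)$, Cauchy--Schwarz over $i$, and $P_T\leq P$ from Proposition \ref{prop_lim_upper_bound}. Your remarks on the meaning of the series and on a possibly infinite right-hand side are careful additions that the paper leaves implicit.
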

\begin{proof}
{\allowdisplaybreaks
Applying Corollary \ref{kor_first_bound}, \eqref{rel_via_Frob} and Theorem \ref{bound_tree_expression}, we obtain
\begin{align*}
&\left\|C \sum_{i=1}^\infty x_i(t)\right\|\leq
\sum _{i=1}^\infty\left\|C x_i(t)\right\|\\
&\leq \sum _{i=1}^\infty\sqrt{\sum_{\tau\in\mathcal T_i}\int_{D(\tau; t)} \left\|C F_{\tau}(t, (t_v)_{v\in\tau})\right\|_F^2 \dd(t_v)_{v\in\tau}} \sqrt{\sum_{\tau\in\mathcal T_i}\int_{D(\tau; t)}\prod_{\ell\in\leaves(\tau)}\|u(t_\ell)\|^2\dd(t_v)_{v\in\tau}}\\
&\leq \sum _{i=1}^\infty\sqrt{\trace(C \int_0^T Z_{i, T}(s)\dd s\, C^\top)} \sqrt{\Big(\exp\Big\{\int_0^T \left\|\mathfrak u(s)\right\|^2 \dd s\Big\}\Big)^{2i-1}\sum_{\tau\in\mathcal T_i}\int_{D(\tau; T)}\prod_{\ell\in\leaves(\tau)}\|u(t_\ell)\|^2\dd(t_v)_{v\in\tau}}
\end{align*}
exploiting that $D(\tau; t)\subseteq D(\tau; T)$ for all $t\in[0, T]$. Using the Cauchy-Schwarz inequality leads to \begin{align*}
&\left\|C \sum_{i=1}^\infty x_i(t)\right\|\\
&\leq \sqrt{\trace(C P_T C^\top)} \sqrt{\sum _{i=1}^\infty\Big(\exp\Big\{\int_0^T \left\|\mathfrak u(s)\right\|^2 \dd s\Big\}\Big)^{2i-1}\sum_{\tau\in\mathcal T_i}\int_{D(\tau; T)}\prod_{\ell\in\leaves(\tau)}\|u(t_\ell)\|^2\dd(t_v)_{v\in\tau}}
\end{align*}
for all $t\in[0, T]$ yielding \eqref{Gramian_based_bound}. Replacing $P_T$ by $P$ is possible due to Proposition \ref{prop_lim_upper_bound} giving us $P_T\leq P$.
% The steps to prove \eqref{Gramian_based_bound2} are exactly the same and are therefore omitted. This concludes the proof.
}
\end{proof}
Note that Theorem \ref{thm_gramian_nased bound} addresses an open question in \cite{morBenG24, nonlinear_irka}. In particular, $\mathcal{H}_2$-optimal model reduction \cite{nonlinear_irka} minimizes system norms that could not be linked to the output of quadratic bilinear systems so far. This connection is now established by Theorem \ref{thm_gramian_nased bound}. It solely remains to show that $\sum_{i=1}^\infty x_i$ is the solution of the bilinear quadratic equation \eqref{stochstatenew}. Consequently, Theorem \ref{thm_gramian_nased bound} leads to an output bound for the state variable $x$.
\begin{kor}\label{x_eq_sum}
Suppose that the Gramian $P_T$ exists and that the controls $u, \mathfrak u\in L^2_T$ satisfy \begin{align}\label{control_cond1}
\sum _{i=1}^\infty\Big(\exp\Big\{\int_0^T \left\|\mathfrak u(s)\right\|^2 \dd s\Big\}\Big)^{2i-1}\sum_{\tau\in\mathcal T_i}\int_{D(\tau; T)}\prod_{\ell\in\leaves(\tau)}\|u(t_\ell)\|^2\dd(t_v)_{v\in\tau}<\infty.
\end{align}
% Or, if $\mathfrak u$ is bounded by $\bar c>0$, the associated Gramian $\bar P_T$ exist,  and the control $u$ satisfies \begin{align}\label{control_cond2}
% \sum _{i=1}^\infty\sum_{\tau\in\mathcal T_i}\int_{D(\tau; T)}\prod_{\ell\in\leaves(\tau)}\|u(t_\ell)\|^2\dd(t_v)_{v\in\tau}<\infty.
% \end{align}
Then, $x = \sum_{i=1}^\infty x_i$ is the unique global solution of \eqref{stochstatenew} on $[0, T]$.
\end{kor}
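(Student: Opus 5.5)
The plan is to reduce everything to Proposition \ref{prop_global_sol}, which says that $\sum_{i=1}^\infty x_i$ is a global solution once $\sum_{i=1}^\infty\|x_i(t)\|\leq c<\infty$ uniformly on $[0,T]$. Uniqueness is then handled separately by a standard local Lipschitz argument.

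For the bound, I would apply Theorem \ref{thm_gramian_nased bound} with $C=I_n$. The theorem is really a statement about the partial quantity $\sum_i\|Cx_i(t)\|$: its proof first bounds $\|C\sum_i x_i\|$ by $\sum_i\|Cx_i(t)\|$ and then estimates that sum. So the same chain of estimates (Corollary \ref{kor_first_bound}, \eqref{rel_via_Frob}, Theorem \ref{bound_tree_expression}, and Cauchy--Schwarz over $i$) gives
\begin{align*}
\sum_{i=1}^\infty\|x_i(t)\|\leq \sqrt{\trace(P_T)}\,\sqrt{S},\qquad t\in[0,T].
\end{align*}
Here $S$ denotes the series in \eqref{control_cond1}. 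The quantity $\trace(P_T)=\sum_i\trace\int_0^T Z_{i,T}(s)\dd s$ is finite because $P_T$ exists and each summand is positive semidefinite. Also $S<\infty$ by assumption. This yields the uniform constant $c=\sqrt{\trace(P_T)S}$. Proposition \ref{prop_global_sol} then shows that $x:=\sum_i x_i$ solves \eqref{stochstatenew} on $[0,T]$.

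For uniqueness, I would first note that $x$ is bounded by $c$ and absolutely continuous. Suppose $\tilde x$ is another solution on $[0,T']\subseteq[0,T]$ with $\tilde x(0)=0$, and set $e=x-\tilde x$. Using the identity $x\otimes x-\tilde x\otimes\tilde x=e\otimes x+\tilde x\otimes e$, we get
\begin{align*}
\|\dot e\|\leq\big(\|A\|+\|N\|\|\mathfrak u\|+\|H\|(\|x\|+\|\tilde x\|)\big)\|e\|.
\end{align*}
The coefficient is integrable, since $\mathfrak u\in L^2_T\subset L^1_T$ and $x,\tilde x$ are continuous, hence bounded, on $[0,T']$. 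Gronwall's inequality then gives $e\equiv0$. Thus any solution coincides with $x$ wherever it exists, so $x$ is the unique global solution.

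The main obstacle is the justification of the first step. Theorem \ref{thm_gramian_nased bound} is formally stated for $\|C\sum_i x_i\|$, not for $\sum_i\|x_i\|$, so I would re-trace its proof to confirm that the same bound applies before the triangle inequality is used. The remaining steps are routine.
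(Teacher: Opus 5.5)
Your proposal is correct and is the paper's proof: set $C=I$ in Theorem \ref{thm_gramian_nased bound} to get a bound uniform in $t$, then apply Proposition \ref{prop_global_sol}. You also make explicit two points the paper leaves implicit. First, the theorem's proof bounds $\sum_i\|x_i(t)\|$, which is what Proposition \ref{prop_global_sol} actually requires, and not only $\|\sum_i x_i(t)\|$. Second, your Gronwall argument establishes the uniqueness that the statement claims.
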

\begin{proof}
Setting $C=I$ in Theorem \ref{thm_gramian_nased bound}, we see that the right-hand side of \eqref{Gramian_based_bound} is finite if $P_T$ exists and \eqref{control_cond1} holds.
%If $\bar P_T$ exists and \eqref{control_cond2} is true, then the right-hand side of \eqref{Gramian_based_bound2} is finite. In both cases,
Now, the assumptions of
Proposition \ref{prop_global_sol} are satisfied and hence the result of this corollary follows.
\end{proof}
Generally, it is hard to see a-priori when condition \eqref{control_cond1} % and \eqref{control_cond2} are
is satisfied. We provide a short result for the case of a bounded control $u$.
\begin{prop}\label{prop_bounds_on_control_part}
Let $u$ be bound by a constant $c$ on $[0, T]$. Then, it holds that \begin{align*}
&\sum _{i=1}^\infty\Big(\exp\Big\{\int_0^T \left\|\mathfrak u(s)\right\|^2 \dd s\Big\}\Big)^{2i-1}\sum_{\tau\in\mathcal T_i}\int_{D(\tau; T)}\prod_{\ell\in\leaves(\tau)}\|u(t_\ell)\|^2\dd(t_v)_{v\in\tau}\\
&\leq \exp\Big\{\int_0^T \left\|\mathfrak u(s)\right\|^2 \dd s\Big\}Tc^2 \sum_{j=0}^\infty \bigg(\exp\Big\{2\int_0^T \left\|\mathfrak u(s)\right\|^2 \dd s\Big\}T^2c^2 0.5\bigg)^j.
\end{align*}
%as well as
%\begin{align*}
%\sum _{i=1}^\infty\sum_{\tau\in\mathcal T_i}\int_{D(\tau; T)}\prod_{\ell\in\leaves(\tau)}\|u(t_\ell)\|^2\dd(t_v)_{v\in\tau}\leq Tc^2 \sum_{j=0}^\infty (T^2c^2 0.5)^j.
%\end{align*}
\end{prop}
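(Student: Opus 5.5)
The plan is to bound each tree integral by the volume of the tree domain, and then to control the sum of inverse tree factorials over $\mathcal T_i$ by a geometric factor.

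\textbf{Step 1 (one tree).} Fix $i\geq 1$ and $\tau\in\mathcal T_i$. Since $\|u(t_\ell)\|\leq c$ and $\tau$ has $i$ leaves, the integrand satisfies $\prod_{\ell\in\leaves(\tau)}\|u(t_\ell)\|^2\leq c^{2i}$. A full binary tree with $i$ leaves has $|\tau|=2i-1$ nodes. Applying \eqref{vol_D_tau} then gives
\begin{align*}
\int_{D(\tau; T)}\prod_{\ell\in\leaves(\tau)}\|u(t_\ell)\|^2\dd(t_v)_{v\in\tau}\leq c^{2i}\,\frac{T^{2i-1}}{\gamma(\tau)}.
\end{align*}
Abbreviate $E:=\exp\{\int_0^T\|\mathfrak u(s)\|^2\dd s\}$. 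The $i$-th summand on the left-hand side is then at most $E^{2i-1}T^{2i-1}c^{2i}s_i$, where $s_i:=\sum_{\tau\in\mathcal T_i}1/\gamma(\tau)$.

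\textbf{Step 2 (combinatorial bound, the main step).} I will show $s_i\leq 2^{-(i-1)}$ by strong induction on $i$. The base case is $s_1=1$. For $i\geq 2$, every $\tau\in\mathcal T_i$ is uniquely of the form $[\tau_1,\tau_2]$ with $\tau_1\in\mathcal T_k$ and $\tau_2\in\mathcal T_{i-k}$, by \eqref{def_bin_trees}. The definition of $\gamma$ gives $\gamma(\tau)=(2i-1)\gamma(\tau_1)\gamma(\tau_2)$. Hence
\begin{align*}
s_i=\frac{1}{2i-1}\sum_{k=1}^{i-1}s_k s_{i-k}\leq \frac{1}{2i-1}\sum_{k=1}^{i-1}2^{-(k-1)}2^{-(i-k-1)}=\frac{2(i-1)}{2i-1}\,2^{-(i-1)}\leq 2^{-(i-1)}.
\end{align*}
This is the only non-routine point. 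It works because the factor $2i-1$ in $\gamma$ beats the $i-1$ splittings, which compensates for the Catalan growth of $|\mathcal T_i|$.

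\textbf{Step 3 (summation).} Combining Steps 1 and 2, the $i$-th summand is at most
\begin{align*}
E^{2i-1}T^{2i-1}c^{2i}\,2^{-(i-1)}=E\,Tc^2\,\big(E^2T^2c^2\,0.5\big)^{i-1}.
\end{align*}
Summing over $i\geq 1$ and setting $j=i-1$ yields exactly the stated bound.
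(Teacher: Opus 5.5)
Your proof is correct and follows the paper's argument step for step. You bound the integrand by $c^{2i}$ and apply the volume formula to get $T^{2i-1}/\gamma(\tau)$, derive $S_i=\frac{1}{2i-1}\sum_{k=1}^{i-1}S_kS_{i-k}$ from the tree factorial, prove $S_i\le 0.5^{i-1}$ by strong induction (your $\frac{2(i-1)}{2i-1}\le 1$ is the paper's $\frac{i-1}{2i-1}\le 0.5$), and sum the nonnegative terms with $j=i-1$.
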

\begin{proof}
{\allowdisplaybreaks
Since $u$ is bounded, we find that
\begin{align*}
 \sum_{\tau\in\mathcal T_i}\int_{D(\tau; T)}\prod_{\ell\in\leaves(\tau)}\|u(t_\ell)\|^2\dd(t_v)_{v\in\tau}
 &\leq \sum_{\tau\in\mathcal T_i} \Vol(D(\tau; T)) \, c^{2 \vert \leaves(\tau)\vert}=\sum_{\tau\in\mathcal T_i} \frac{T^{\vert\tau \vert}}{\gamma(\tau)} c^{2 \vert \leaves(\tau)\vert}\\
 &=T^{2i-1} c^{2i}\sum_{\tau\in\mathcal T_i} \frac{1}{\gamma(\tau)}
\end{align*}
using \eqref{vol_D_tau} and the definition of $\mathcal T_i$ in \eqref{def_bin_trees}. We define the sequence $S_i:=\sum_{\tau\in\mathcal T_i} \frac{1}{\gamma(\tau)}$ with $S_1=1$ and observe that \begin{align*}
 S_i=\sum_{k=1}^{i-1}\sum_{\tau_1\in\mathcal T_k}\sum_{\tau_2\in\mathcal T_{i-k}} \frac{1}{\vert\tau\vert\gamma(\tau_1)\gamma(\tau_2)}  = \frac{1}{2i-1} \sum_{k=1}^{i-1} \sum_{\tau_1\in\mathcal T_k}\frac{1}{\gamma(\tau_1)} \sum_{\tau_2\in\mathcal T_{i-k}}\frac{1}{\gamma(\tau_2)}   =  \frac{1}{2i-1} \sum_{k=1}^{i-1} S_k S_{i-k}                                                                                                                                                                                          \end{align*}
for $i\geq 2$ using the definition of the tree factorial of $\tau=[\tau_1, \tau_2]$ and $\vert\tau\vert=2i-1$. We prove that $S_i\leq 0.5^{i-1}$ for all $i\geq 1$. The claim is true for $i=1$. Let us assume that it also holds for $S_1, \dots, S_{i-1}$. Then, we obtain \begin{align*}
 S_i\leq  \frac{1}{2i-1} \sum_{k=1}^{i-1} 0.5^{k-1} 0.5^{i-k-1}=   \frac{i-1}{2i-1} 0.5^{i-2}\leq  0.5^{i-1},                                                                                                                                                                                                                                                                    \end{align*}
as $\frac{i-1}{2i-1}\leq 0.5$ for all $i\geq 1$. Consequently, we have \begin{align*}
 \sum_{\tau\in\mathcal T_i}\int_{D(\tau; T)}\prod_{\ell\in\leaves(\tau)}\|u(t_\ell)\|^2\dd(t_v)_{v\in\tau}
 \leq T^{2i-1} c^{2i} 0.5^{i-1}
 \end{align*}
  and hence
  %\begin{align*}
%  \sum_{i=1}^\infty\sum_{\tau\in\mathcal T_i}\int_{D(\tau; T)}\prod_{\ell\in\leaves(\tau)}\|u(t_\ell)\|^2\dd(t_v)_{v\in\tau}
%  &\leq \sum_{i=1}^\infty T^{2i-1} c^{2i} 0.5^{i-1}= \sum_{j=0}^\infty T^{2j+1} c^{2j+2} 0.5^{j}\\
%  &=Tc^2 \sum_{j=0}^\infty (T^2c^2 0.5)^j.
%  \end{align*}
% With the additional exponential term, the estimate is
\begin{align*}
 &\sum_{i=1}^\infty\Big(\exp\Big\{\int_0^T \left\|\mathfrak u(s)\right\|^2 \dd s\Big\}\Big)^{2i-1}\sum_{\tau\in\mathcal T_i}\int_{D(\tau; T)}\prod_{\ell\in\leaves(\tau)}\|u(t_\ell)\|^2\dd(t_v)_{v\in\tau}\\
 &\leq  \sum_{j=0}^\infty \Big(\exp\Big\{\int_0^T \left\|\mathfrak u(s)\right\|^2 \dd s\Big\}\Big)^{2j+1} T^{2j+1} c^{2j+2} 0.5^{j}\\
 &=\exp\Big\{\int_0^T \left\|\mathfrak u(s)\right\|^2 \dd s\Big\}Tc^2 \sum_{j=0}^\infty \bigg(\exp\Big\{2\int_0^T \left\|\mathfrak u(s)\right\|^2 \dd s\Big\}T^2c^2 0.5\bigg)^j.
 \end{align*}
 }
 \end{proof}
 \begin{remark}
Numerical experiments indicate that the sequence $(S_i)$ defined in the proof of Proposition \ref{prop_bounds_on_control_part} seems to satisfy $S_i\leq q^{i-1}$ for $q=4/\pi^2\approx 0.405285$. This would slightly improve the bounds in Proposition \ref{prop_bounds_on_control_part}, since the constant $0.5$ could be replace by $4/\pi^2$. However, if the conjecture on $S_i$ is true, it cannot be shown via a simple induction proof. From Proposition \ref{prop_bounds_on_control_part} we now conclude that the series in \eqref{control_cond1} converges if \begin{align}\label{cond_con_u}
  \exp\Big\{2\int_0^T \left\|\mathfrak u(s)\right\|^2 \dd s\Big\}T^2c^2 0.5 <1.                                                                                                                                                                                                                                                                                                                                                                                                                                                                                                                                                                                                                                                                                                                                                                                                                                                                                                                                                                                                                                                                          \end{align}
% A criterion for the convergence of the series in  \eqref{control_cond2} is obtained when setting $\mathfrak{u}\equiv 0$ in \eqref{cond_con_u}.
 \end{remark}
Below, let us clarify under  which conditions the reachability Gramians introduced in Definition \ref{def_P} exist. Although solely (conservative) sufficient conditions for their existence are provided, they are more general than the ones given in \cite{morBenG24}.
\begin{prop}\label{prop_reach_gram_exist}
 The Gramian $P_T=\sum_{i=1}^\infty\int_0^T Z_{i, T}(t)\dd t$ exists if \begin{align*}
  4\int_0^T \mathbb E\left\| \Phi_w(t) H\right\|^2\dd t \int_0^T \left\|Z_1(t)\right\|\dd t\leq 1,                                                  \end{align*}
where $Z_1:=Z_{1, T}= Z_{1, \infty}$ solves \eqref{eq_for_Z1} and $\Phi_w$ is defined in \eqref{stoch_fund}.  Given the stability condition \eqref{stab_cond}, the infinite Gramian $P=\sum_{i=1}^\infty\int_0^\infty Z_{i, \infty}(t)\dd t$ exists if \begin{align*}
  4\int_0^\infty \mathbb E\left\| \Phi_w(t) H\right\|^2\dd t \int_0^\infty \left\|Z_1(t)\right\|\dd t\leq 1.
                      \end{align*}
\end{prop}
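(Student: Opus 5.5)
The plan is to reduce existence of $P_T$ to convergence of the scalar series $\sum_i a_i$ with $a_i:=\|\int_0^T Z_{i,T}(s)\dd s\|$. The $a_i$ obey a Catalan-type recursion, and generating functions handle such recursions. Since all $\int_0^T Z_{i,T}\dd s$ are positive semidefinite, convergence of $\sum_i a_i$ (spectral norm) implies convergence of the matrix series defining $P_T$ in Definition \ref{def_P}.

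\textbf{Step 1 (recursive norm bound).} For $i\geq 2$, the stochastic representation from Lemma \ref{fund_est} gives $Z_{i,T}(s)=\mathbb E[\Phi_w(s)HM_iH^\top\Phi_w(s)^\top]$ with
\begin{align*}
M_i=\sum_{k=1}^{i-1}\int_0^T Z_{k,T}\dd s\otimes\int_0^T Z_{i-k,T}\dd s .
\end{align*}
As $M_i\leq\|M_i\|I$, we get $Z_{i,T}(s)\leq \|M_i\|\,\mathbb E[\Phi_w(s)HH^\top\Phi_w(s)^\top]$. Taking norms and integrating gives
\begin{align*}
a_i\leq \|M_i\|\int_0^T\mathbb E\|\Phi_w(s)H\|^2\dd s=:\alpha\|M_i\| .
\end{align*}
Here I use $\|\mathbb E[\Phi_wHH^\top\Phi_w^\top]\|\leq\mathbb E\|\Phi_wH\|^2$. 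The identity $\|X\otimes Y\|=\|X\|\|Y\|$ then yields $a_i\leq\alpha\sum_{k=1}^{i-1}a_ka_{i-k}$, with $a_1\leq\int_0^T\|Z_1(s)\|\dd s=:\beta$.

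\textbf{Step 2 (comparison with Catalan numbers).} By induction, $a_i\leq b_i$, where $b_1=\beta$ and $b_i=\alpha\sum_{k=1}^{i-1}b_kb_{i-k}$. Scaling shows $b_i=c_i\alpha^{i-1}\beta^i$, with $c_i$ the Catalan numbers from \eqref{catalan_number_recursion}. The generating function $g(z)=\sum_i c_iz^i$ satisfies $g=z+g^2$, so $g(z)=\frac{1-\sqrt{1-4z}}{2}$. Its series converges for $|z|\leq 1/4$, including the boundary $z=1/4$, because $c_i\sim 4^{i-1}i^{-3/2}/\sqrt\pi$ is summable there. Hence $\sum_i a_i\leq \alpha^{-1}\sum_i c_i(\alpha\beta)^i<\infty$ whenever $4\alpha\beta\leq 1$, which is exactly the stated condition. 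The degenerate case $\alpha=0$ is trivial, since then $a_i=0$ for $i\geq2$.

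\textbf{Step 3 (infinite horizon).} The same argument applies with $T$ replaced by $\infty$ in the definition of $Z_{i,\infty}$. Under \eqref{stab_cond}, $\mathbb E\|\Phi_w(t)\|^2$ decays exponentially (Remark \ref{remark_stability}), so $\int_0^\infty\mathbb E\|\Phi_w H\|^2\dd t$ and $\int_0^\infty\|Z_1\|\dd t$ are finite. Each $\int_0^\infty Z_{i,\infty}\dd s$ then exists by Proposition \ref{int_Zi_exists}, and the recursion and comparison go through verbatim.

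The main obstacle is the boundary case of equality in the condition. It needs the Catalan asymptotics to guarantee convergence at radius $1/4$, rather than a cruder ratio test, which would give only the strict inequality. If precise asymptotics are to be avoided, one can instead use the explicit formula $c_i=\frac1i\binom{2(i-1)}{i-1}$ together with the Stirling-type bound $\binom{2m}{m}\leq 4^m/\sqrt{\pi m}$, which gives $c_i4^{-(i-1)}\lesssim i^{-3/2}$.
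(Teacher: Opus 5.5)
Your proof is correct and follows essentially the same route as the paper. Both arguments take three steps. First, the stochastic representation $Z_{i,T}(t)=\mathbb E[\Phi_w(t)Z_{i,T}(0)\Phi_w(t)^\top]$ gives the norm recursion $a_i\leq \alpha\sum_{k=1}^{i-1}a_ka_{i-k}$. Second, this is dominated by the scaled Catalan sequence $c_i\alpha^{i-1}\beta^i$. Third, the asymptotics $c_i\sim 4^{i-1}/((i-1)^{3/2}\sqrt{\pi})$ give summability up to and including the case $4\alpha\beta=1$.

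The differences are cosmetic. You track $\|\int_0^T Z_{i,T}(s)\dd s\|$ rather than the paper's $\int_0^T\|Z_{i,T}(t)\|\dd t$, and you pass through the Loewner bound $M_i\leq\|M_i\|I$ rather than applying submultiplicativity inside the expectation.
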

\begin{proof}
We define a sequence $(b_i)$ by  $b_i:=\int_0^T \left\|Z_{i, T}(t)\right\|\dd t$ for $i\geq 1$. We further exploit that $Z_{i, T}(t)=\mathbb E[\Phi_w(t)Z_{i, T}(0)\Phi_w(t)^\top]$, where $Z_{1, T}(0)=B B^\top$ and $Z_{i, T}(0)=H\Big(\sum_{k=1}^{i-1}\int_0^T Z_{k, T}(s)\dd s\otimes \int_0^T Z_{i-k, T}(s) \dd s\Big)H^\top $ for $i\geq 2$.  Consequently, we have \begin{align}\nonumber
    b_i& = \int_0^T \left\|\mathbb E\Big[\Phi_w(t)H\Big(\sum_{k=1}^{i-1}\int_0^T Z_{k, T}(s)\dd s\otimes \int_0^T Z_{i-k, T}(s) \dd s\Big)H^\top \Phi_w(t)^\top\Big]\right\|\dd t \\
    &\leq \int_0^T \mathbb E\left\| \Phi_w(t) H\right\|^2 \dd t \sum_{k=1}^{i-1} \left\|\int_0^T Z_{k, T}(s)\dd s \right\|\left\|\int_0^T Z_{i-k, T}(s) \dd s\right\|\leq k_H \sum_{k=1}^{i-1} b_k b_{i-k}, \label{catalan_inequality2}
                                     \end{align}
with $k_H:=\int_0^T \mathbb E\left\| \Phi_w(t) H\right\|^2\dd t$ for $i\geq 2$. Therefore, $(b_i)$ satisfies a Catalan number type recursion, however, with an inequality. We are now constructing a sequence $(d_i)$ that fulfills \eqref{catalan_inequality2} with an equality using the Catalan numbers $(c_i)$ defined through \eqref{catalan_number_recursion}. We set $d_i = k_H^{i-1} b_1^i c_i$ and observe that $d_1 = b_1$. For $i\geq 2$, we obtain that \begin{align*}
  d_i = k_H^{i-1} b_1^i c_i =k_H^{i-1} b_1^i \sum_{k=1}^{i-1} c_k c_{i-k} = k_H \sum_{k=1}^{i-1} (k_H^{k-1} b_1^k c_k) (k_H^{i-k-1}b_1^{i-k} c_{i-k})  =      k_H \sum_{k=1}^{i-1} d_k d_{i-k}.                                                                                                                                                                                                                                                                                                                                                                                            \end{align*}
We now prove that $b_i\leq d_i$ for all $i\geq 1$. The statement is true for $i=1$, so that we are now assuming that $b_k\leq d_k$ for all $1\leq k\leq i-1$. Consequently, we have \begin{align*}
 b_i\leq  k_H \sum_{k=1}^{i-1} b_k b_{i-k}\leq  k_H \sum_{k=1}^{i-1} d_k d_{i-k}= d_i.                                                                                                                                                                                                                                                                                                                                                                    \end{align*}
As the asymptotics of the Catalan numbers are given by $c_i\sim \frac{4^{i-1}}{(i-1)^{\frac{3}{2}}\sqrt{\pi}}$, we obtain \begin{align*}
  d_i\sim b_1\frac{(4k_H b_1)^{i-1}}{(i-1)^{\frac{3}{2}}\sqrt{\pi}}.                                                                                                                          \end{align*}
Therefore, the right-hand side of \begin{align*}
  \sum_{i=1}^\infty \left\|\int_0^T Z_{i, T}(t)\dd t \right\|\leq \sum_{i=1}^\infty \int_0^T \left\|Z_{i, T}(t)\right\|\dd t =  \sum_{i=1}^\infty b_i\leq \sum_{i=1}^\infty d_i
                                  \end{align*}
is finite if $4k_H b_1\leq 1$. Given \eqref{stab_cond}, the integrals \begin{align*}
b_{1, \infty}:=\int_0^\infty \left\|Z_1(t)\right\|\dd t,\quad k_{H, \infty}:=\int_0^\infty\mathbb E\left\| \Phi_w(t) H\right\|^2\dd t
\end{align*}
 are finite, see Remark \ref{remark_stability}. Replacing $T$ by $\infty$ in the above proof leads to \begin{align*}
 \int_0^\infty \left\|Z_{i, \infty}(t)\right\|\dd t \leq  k_{H, \infty}^{i-1} b_{1, \infty}^i c_i
\end{align*}
for all $i\geq 1$. Using the above argument once more, we know that $\sum_{i=1}^\infty\int_0^\infty \left\|Z_{i, \infty}(t)\right\|\dd t$ is finite if $4k_{H, \infty} b_{1, \infty}\leq 1$. This concludes the proof.
\end{proof}
%\begin{remark}
% Notice that $Z_1$
%\end{remark}
We finally relate the reachability Gramians to matrix equations. These equations play an important role in the error analysis of the reduction procedure developed later, see Sections \ref{sec_exact_rom} and \ref{BT_bound}.
\begin{prop}
 If the Gramian $P_T=\sum_{i=1}^\infty\int_0^T Z_{i, T}(s)\dd s$ exists, then it satisfies \begin{align}\label{eq_for_PT}
   \sum_{i=1}^\infty Z_{i, T}(T)- B B^\top =\mathcal L\big(P_T\big)+ H\big(P_T\otimes P_T\big)H^\top.                                                \end{align}
  If \eqref{stab_cond} holds and $P=\sum_{i=1}^\infty\int_0^\infty Z_{i, \infty}(s)\dd s$ is well-defined, then it is a solution of     \begin{align}\label{eq_for_P}
   - B B^\top =\mathcal L\big(P\big)+ H\big(P\otimes P\big)H^\top.
              \end{align}
\end{prop}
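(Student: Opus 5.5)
The plan is to integrate the differential equations \eqref{eq_for_Z1} and \eqref{eq_for_Z2} over $[0,T]$ term by term and then sum over $i$. Since $\mathcal L$ is linear, integrating $\dot Z_{i,T}=\mathcal L(Z_{i,T})$ from $0$ to $T$ gives
\begin{align*}
Z_{i,T}(T)-Z_{i,T}(0)=\mathcal L\Big(\int_0^T Z_{i,T}(s)\dd s\Big),\quad i\geq 1.
\end{align*}
The initial values are explicit: $Z_{1,T}(0)=BB^\top$, and for $i\geq 2$,
\begin{align*}
Z_{i,T}(0)=H\Big(\sum_{k=1}^{i-1}G_k\otimes G_{i-k}\Big)H^\top,\qquad G_k:=\int_0^T Z_{k,T}(s)\dd s.
\end{align*}

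Next I sum these identities over $i=1,\dots,\mathfrak n$ and pass to the limit $\mathfrak n\to\infty$. By existence of $P_T$, the partial sums $\sum_{i\le \mathfrak n}G_i$ converge to $P_T$. Each $G_i$ is positive semidefinite, so the series converges absolutely; this is also what the proof of Proposition \ref{prop_reach_gram_exist} delivers via $\sum_i b_i<\infty$. Continuity of the linear map $\mathcal L$ then gives
\begin{align*}
\mathcal L\Big(\sum_{i\le\mathfrak n}G_i\Big)\to\mathcal L(P_T).
\end{align*}

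For the quadratic term, the Cauchy product of an absolutely convergent series with itself converges, in the bilinear Kronecker sense, to the product of the sums:
\begin{align*}
\sum_{i=2}^{\infty}\sum_{k=1}^{i-1}G_k\otimes G_{i-k}=\sum_{j_1,j_2\ge1}G_{j_1}\otimes G_{j_2}=P_T\otimes P_T.
\end{align*}
This uses $\|G_{j_1}\otimes G_{j_2}\|\le\|G_{j_1}\|\|G_{j_2}\|$ and the reordering argument already used in Proposition \ref{prop_global_sol}. Hence $\sum_{i}Z_{i,T}(0)=BB^\top+H(P_T\otimes P_T)H^\top$.

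With two of the three series convergent, $\sum_i Z_{i,T}(T)$ converges as well. Rearranging then yields \eqref{eq_for_PT}.

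The step I expect to need the most care is justifying convergence of $\sum_i Z_{i,T}(T)$ and the interchange of limits. I would handle it exactly as above: express $\sum_{i\le\mathfrak n} Z_{i,T}(T)$ as the difference of two convergent sequences instead of bounding it directly.

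For the infinite-horizon case, I repeat the argument with $T=\infty$. Under \eqref{stab_cond}, $\mathcal L$ is invertible and $Z_{i,\infty}(s)=\expn^{\mathcal L s}Z_{i,\infty}(0)$ decays exponentially, so $Z_{i,\infty}(T)\to0$ as $T\to\infty$. Integrating over $[0,\infty)$ gives
\begin{align*}
-Z_{i,\infty}(0)=\mathcal L\Big(\int_0^\infty Z_{i,\infty}(s)\dd s\Big)
\end{align*}
directly, with no boundary term. Summing over $i$ with the same Cauchy-product argument, now using the $G_k$ defined with $\infty$, gives \eqref{eq_for_P}.
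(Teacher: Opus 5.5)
Your proof is correct. For the finite-horizon identity it follows the paper's argument exactly: integrate each $Z_{i,T}$ equation, sum up to $\mathfrak n$, and let $\mathfrak n\to\infty$. Your positive-semidefiniteness and Cauchy-product justification of the quadratic term fills in a limit the paper takes without comment.

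For the infinite-horizon equation, the paper lets $T\to\infty$ in the finite-$\mathfrak n$ identity for $Z_{i,T}$, using Proposition~\ref{prop_lim_upper_bound} and $Z_{i,T}(T)\leq Z_{i,\infty}(T)\to 0$. You instead integrate the $Z_{i,\infty}$ equations directly on $[0,\infty)$. That is equally valid under \eqref{stab_cond} and slightly more direct, since it bypasses Proposition~\ref{prop_lim_upper_bound}.
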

\begin{proof}
 Let us integrate the equations for $Z_{i, T}$ in \eqref{eq_for_Z1} and \eqref{eq_for_Z2}. This yields \begin{align*}
 Z_{i, T}(T)-Z_{i, T}(0) = \mathcal L\Big(\int_0^T Z_{i, T}(s)\dd s\Big)                                                                                                  \end{align*}
using that $\mathcal L$ is linear (and bounded). We sum over $i$ from $1$ to some $\mathfrak n$, use $Z_{1, T}(0)=B B^\top$ and $Z_{i, T}(0)=H\Big(\sum_{j_i, j_2\geq 1 \atop j_1+j_2=i}\int_0^T Z_{j_1, T}(s)\dd s\otimes \int_0^T Z_{j_2, T}(s) \dd s\Big)H^\top$ for $i\geq 2$. This leads to  \begin{align}\label{basic_result}
 \sum_{i=1}^{\mathfrak n} Z_{i, T}(T) - B B^\top = \mathcal L\Big(\sum_{i=1}^{\mathfrak n}\int_0^T Z_{i, T}(s)\dd s\Big) + H\Big(\sum_{j_i, j_2\geq 1 \atop j_1+j_2\leq \mathfrak n}\int_0^T Z_{j_1, T}(s)\dd s\otimes \int_0^T Z_{j_2, T}(s) \dd s\Big)H^\top.                                                                                             \end{align}
 We take the limit as $\mathfrak n\to\infty$ in \eqref{basic_result} and obtain \eqref{eq_for_PT}. Now, taking the limit as $T\to\infty$ in \eqref{basic_result} and using Proposition \ref{prop_lim_upper_bound} yields \begin{align}\label{basic_result2}
 - B B^\top = \mathcal L\Big(\sum_{i=1}^{\mathfrak n}\int_0^\infty Z_{i, \infty}(s)\dd s\Big) + H\Big(\sum_{j_i, j_2\geq 1 \atop j_1+j_2\leq \mathfrak n}\int_0^\infty Z_{j_1, \infty}(s)\dd s\otimes \int_0^\infty Z_{j_2, \infty}(s) \dd s\Big)H^\top.                                                                                             \end{align}
Above, we exploited that \eqref{stab_cond} implies $Z_{i, T}(T)\to 0$ for all $i=1, 2, \dots $ as $T\to \infty$. This can be argued based on the proof of Proposition \ref{prop_lim_upper_bound} giving us $Z_{i, T}(T)\leq Z_{i, \infty}(T)$. Moreover, $Z_{i, \infty}(T)\to 0$ as $T\to \infty$ considering the proof of Proposition \ref{int_Zi_exists}. Taking the limit as $\mathfrak n\to \infty$ in \eqref{basic_result2} concludes the proof.
\end{proof}
It is worth noting that our Gramian approach leads to the same matrix equations as in \cite{morBenG24}.

\section{Dominant subspaces and reduced-order model}

Identifying dominant subspaces is a key step in model reduction. Existing works on balanced truncation for bilinear quadratic systems like \cite{morBenG24} did not establish connections between the eigenspaces of the Gramians and dominant state-space directions. Our approach can be used to address this aspect.

\subsection{Observability Gramian}

We introduce the block-wise transposed of $N$ and $H$ by
\begin{align*}
 \mathcal N:=\begin{bmatrix}N_1^\top & \ldots & N_m^\top\end{bmatrix},\quad  \mathcal H:=\begin{bmatrix}H_1^\top & \ldots & H_n^\top\end{bmatrix}
\end{align*}
and define the adjoint operator w.r.t. $\langle \cdot, \cdot\rangle_F$ of $\mathcal L$ (introduced in \eqref{lyap_op}) as follows
\begin{align}\label{lyap_op_adjoint}
\mathcal L^*(X):= A^\top X + X A + \mathcal N (I\otimes X) \mathcal N^\top.
               \end{align}
We introduce the notion of a (time-limited) observability Gramian. Once more, solutions of matrix differential equations are involved in contrast to the Volterra kernel approach used in \cite{morBenG24, nonlinear_irka}.
\begin{defn}\label{def_Q}
Let $\mathcal Z_{1, T}(t)$, $t\geq 0$, satisfy
 \begin{align}\label{eq_for_Z1_ad}
 \dot {\mathcal Z}_{1, T}(t) = \mathcal L^*\big(\mathcal Z_{1, T}(t)\big),\quad \mathcal Z_{1, T}(0) = C^\top C
\end{align}
with $\mathcal L^*$ defined in \eqref{lyap_op_adjoint}.
Further, $\mathcal Z_{i, T}(t)$, $t\geq 0$,
($i\geq 2$) are assumed to be the solutions of \begin{align}\label{eq_for_Z2_ad}
 \dot {\mathcal Z}_{i, T}(t) = \mathcal L^*\big(\mathcal Z_{i, T}(t)\big),\quad \mathcal Z_{i, T}(0) = \mathcal H\big(\sum_{k=1}^{i-1}\int_0^T Z_{k, T}(s)\dd s\otimes \int_0^T \mathcal Z_{i-k, T}(s) \dd s\big)\mathcal H^\top,\end{align}
 where $Z_{k, T}$, $i\geq 1$, fulfill \eqref{eq_for_Z1} and \eqref{eq_for_Z2}. Moreover, let us introduce $\mathcal Z_{i, \infty}$, $i\geq 1$, as the solutions of \eqref{eq_for_Z1_ad} and \eqref{eq_for_Z2_ad} when $T$ is replaced by $\infty$ in the initial states. Then, we define \begin{align*}                                                                                                                                                                                                                                                                               Q_T:=\sum_{i=1}^\infty\int_0^T \mathcal Z_{i, T}(s)\dd s\quad \text{and}  \quad  Q:=\sum_{i=1}^\infty\int_0^\infty \mathcal Z_{i, \infty}(s)\dd s.                                                                                                                                                                                                                                                                             \end{align*}
\end{defn}
\begin{remark}\label{rel_mathcalZs}
Assume that $\int_0^\infty Z_{i, \infty}(s)\dd s$ and $\int_0^\infty \mathcal Z_{i, \infty}(s)\dd s$
 exist for all $i\geq 1$. Then, relations between $\mathcal Z_{i, T}$ and $\mathcal Z_{i, \infty}$ can be proved completely analogue to Proposition \ref{prop_lim_upper_bound}. These are $\mathcal Z_{i, T}(s)\leq \mathcal Z_{i, \infty}(s)$ for all $s\geq 0$ and hence
  $\int_0^T \mathcal Z_{i, T}(s)\dd s\leq \int_0^\infty \mathcal Z_{i, \infty}(s)\dd s$ as well as \begin{align*}
\lim_{T\to \infty}\int_0^T \mathcal Z_{i, T}(s)\dd s=\int_0^\infty \mathcal Z_{i, \infty}(s)\dd s.
\end{align*}
\end{remark}
Let us briefly show under which conditions the infinite integrals, mentioned in Remark \ref{rel_mathcalZs}, exist.
\begin{prop}\label{int_mathcal_Zi_exists}
 Let \eqref{stab_cond} hold. Then, the integrals $\int_0^\infty \mathcal Z_{i, \infty}(s)\dd s$ exist for all $i\geq 1$.
\end{prop}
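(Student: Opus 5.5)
We argue by induction on $i$, mirroring the proof of Proposition \ref{int_Zi_exists}, but now for the adjoint operator $\mathcal L^*$. The first step is to note that \eqref{stab_cond} implies $\sigma(\mathcal L^*)\subset\mathbb C_-$. Indeed, $\mathcal L^*$ is the adjoint of $\mathcal L$ with respect to $\langle\cdot,\cdot\rangle_F$, so its matrix representation is the transpose $A^\top\otimes I+I\otimes A^\top+\sum_k N_k^\top\otimes N_k^\top$ of the matrix in \eqref{stab_cond}, and the two spectra coincide. Consequently $\expn^{\mathcal L^* s}$ decays exponentially, i.e. $\|\expn^{\mathcal L^* s}X\|\leq c\,\expn^{-\alpha s}\|X\|$ for some $c,\alpha>0$ and all $X$.

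Second, under \eqref{stab_cond} Remark \ref{remark_stability} and Proposition \ref{int_Zi_exists} give that all $\int_0^\infty Z_{k,\infty}(s)\dd s$ exist, as finite positive semidefinite matrices. For $i=1$ we have $\mathcal Z_{1,\infty}(s)=\expn^{\mathcal L^* s}C^\top C$, which decays exponentially, so $\int_0^\infty\mathcal Z_{1,\infty}(s)\dd s$ exists.

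For the induction step, assume $\int_0^\infty\mathcal Z_{k,\infty}(s)\dd s$ exists for $k\leq i-1$. Then the initial value
\[
\mathcal Z_{i,\infty}(0)=\mathcal H\Big(\sum_{k=1}^{i-1}\int_0^\infty Z_{k,\infty}(s)\dd s\otimes\int_0^\infty\mathcal Z_{i-k,\infty}(s)\dd s\Big)\mathcal H^\top
\]
is a well-defined finite matrix, since it only involves integrals already known to exist. It follows that $\mathcal Z_{i,\infty}(s)=\expn^{\mathcal L^* s}\mathcal Z_{i,\infty}(0)$ is exponentially decaying, and its integral over $[0,\infty)$ exists; in fact it equals $-(\mathcal L^*)^{-1}\mathcal Z_{i,\infty}(0)$.

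If one prefers to stay closer to the paper's positive-semidefinite argument, there is an alternative route. Write $\mathcal Z(s)=\mathbb E[\Phi_{\tilde w}(s)^\top\cdots]$ via the adjoint stochastic fundamental solution, and bound $\mathcal Z_{i,\infty}(s)\leq\lambda_{\max}\,\expn^{\mathcal L^* s}(\mathcal H\mathcal H^\top)$, where $\lambda_{\max}$ is the largest eigenvalue of the Kronecker sum. This uses positivity of $\expn^{\mathcal L^* s}$, which holds since $\mathcal L^*$ is a Lyapunov operator plus a completely positive part. The direct spectral argument avoids this.

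The only genuinely nontrivial point is the spectral identification $\sigma(\mathcal L^*)=\sigma(\mathcal L)$ together with the existence of the $Z_{k,\infty}$-integrals entering the initial values. Both follow from the adjoint relation and from the earlier propositions.
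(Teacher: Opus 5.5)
Your proposal is correct and matches the paper's proof. You identify $\sigma(\mathcal L^*)=\sigma(\mathcal L)\subset\mathbb C_-$ via the transposed matrix representation, deduce exponential decay of $\mathcal Z_{1,\infty}$, and then induct, using that $\mathcal Z_{i,\infty}(0)$ only involves integrals already known to exist (the $Z_{k,\infty}$-integrals coming from Proposition \ref{int_Zi_exists} and Remark \ref{remark_stability}); the positivity-based alternative you sketch is not needed, since the paper relies on exactly this spectral argument.
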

\begin{proof}
Assumption \eqref{stab_cond} is equivalent to \begin{align*}
 \sigma\big(A^\top\otimes I + I \otimes A^\top +\sum_{k=1}^m N_k^\top\otimes N_k^\top\big)\subset \mathbb C_-                                                                                                                                                                                                                                                                                                                                                                                                                                                                                                                                                                                                                                                                                                                                                                                                                                                                                                                                                                                                                                                                                                                                                                                                                                                                                                                                                                                                                                                                                                                                                                                                                                                                                                                                                                                                                                                                                                                                      \end{align*}
meaning that $\sigma({\mathcal L^*})\subset \mathbb C_-$. This provides exponential stability of \eqref{eq_for_Z1_ad} and hence the existence of $\int_0^\infty \mathcal Z_{1, \infty}(s)\dd s$.
Suppose that all integrals $\int_0^\infty \mathcal Z_{k, \infty}(s)\dd s$ exist for $k\in\{1, \dots, i-1\}$. Moreover, $\int_0^\infty Z_{k, \infty}(s)\dd s$ exist for all $k\geq 1$ by Proposition \ref{int_Zi_exists} and Remark \ref{remark_stability}. Consequently, the initial state
 $\mathcal Z_{i, \infty}(0)=\mathcal H\big(\sum_{k=1}^{i-1}\int_0^\infty Z_{k, \infty}(s)\dd s\otimes \int_0^\infty \mathcal Z_{i-k, \infty}(s) \dd s\big)\mathcal H^\top$ is well defined.  Therefore, $\mathcal Z_{i, \infty}$ decays exponentially by assumption and hence its infinite integral exists.
\end{proof}
Now, we need to dual stochastic fundamental solution $\bar \Phi_w$ for the next proposition, where we exploit the stochastic representation of the matrix ODEs occuring in Definition \ref{def_Q}. We introduce $\bar \Phi_w$  as the solution of \begin{align}\label{stoch_fund_ad}
 \bar\Phi_w(t) = I +\int_0^t A^\top \bar \Phi_w(v) \dd v + \int_0^t \mathcal N \big(\dd w(v)\otimes \bar \Phi_w(v)\big).
\end{align}
From Lemma \ref{fund_est}, we obtain that $\mathcal Z_{i, T}(t)=\mathbb E[\bar \Phi_w(t)\mathcal Z_{i, T}(0)\bar \Phi_w(t)^\top]$. We apply this identity below.
\begin{prop}\label{prop_obs_gram_exist}
Suppose that $\sum_{i=1}^\infty \int_0^T \left\|Z_{i, T}(t)\right\|\dd t$ is finite (hence $P_T$ exists) and \begin{align*}
  \int_0^T \mathbb E\left\| \bar \Phi_w(t) \mathcal H\right\|^2\dd t \sum_{i=1}^\infty \int_0^T \left\|Z_{i, T}(t)\right\|\dd t <1.
                                  \end{align*}
 Then, the Gramian $Q_T=\sum_{i=1}^\infty\int_0^T \mathcal Z_{i, T}(t)\dd t$ exists. Given the stability condition \eqref{stab_cond}, the convergence of $\sum_{i=1}^\infty \int_0^\infty \left\|Z_{i, \infty}(t)\right\|\dd t$ (hence $P$ exists) and
 \begin{align*}
  \int_0^\infty \mathbb E\left\| \bar \Phi_w(t) \mathcal H\right\|^2\dd t \sum_{i=1}^\infty \int_0^\infty \left\|Z_{i, \infty}(t)\right\|\dd t <1.
                                  \end{align*}
Then, the infinite Gramian $Q=\sum_{i=1}^\infty\int_0^\infty \mathcal Z_{i, \infty}(t)\dd t$ exists.
\end{prop}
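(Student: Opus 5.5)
We mirror the proof of Proposition \ref{prop_reach_gram_exist}. The key difference is that the recursion for $\mathcal Z_{i,T}$ is \emph{linear} in the $\mathcal Z$'s: the $Z_{k,T}$ enter only as given coefficients. So we expect a geometric-series argument rather than a Catalan-number argument.

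Set $\beta_i:=\int_0^T \|\mathcal Z_{i,T}(t)\|\dd t$, $a_k:=\int_0^T\|Z_{k,T}(t)\|\dd t$ and $\kappa:=\int_0^T \mathbb E\|\bar\Phi_w(t)\mathcal H\|^2\dd t$. We use the representation $\mathcal Z_{i,T}(t)=\mathbb E[\bar\Phi_w(t)\mathcal Z_{i,T}(0)\bar\Phi_w(t)^\top]$ together with the initial condition \eqref{eq_for_Z2_ad}. Submultiplicativity and $\|M_1\otimes M_2\|=\|M_1\|\|M_2\|$ for the spectral norm give, for $i\geq 2$,
\begin{align*}
\beta_i\leq \kappa\sum_{k=1}^{i-1}\Big\|\int_0^T Z_{k,T}\dd s\Big\|\,\Big\|\int_0^T\mathcal Z_{i-k,T}\dd s\Big\|\leq \kappa\sum_{k=1}^{i-1} a_k\,\beta_{i-k}.
\end{align*}
For $i=1$, the quantity $\beta_1$ is finite because $\mathcal Z_{1,T}$ solves a linear ODE on the finite interval $[0,T]$.

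Next we sum over $i$ from $1$ to $\mathfrak n$. Write $S_{\mathfrak n}:=\sum_{i=1}^{\mathfrak n}\beta_i$ and $a:=\sum_{k\geq1}a_k<\infty$, which is finite by assumption. Reordering the double sum (a Cauchy product) gives
\begin{align*}
S_{\mathfrak n}\leq \beta_1+\kappa\sum_{k=1}^{\mathfrak n-1}a_k\sum_{j=1}^{\mathfrak n-k}\beta_j\leq \beta_1+\kappa\, a\, S_{\mathfrak n}.
\end{align*}
Since $\kappa a<1$ by hypothesis and each $S_{\mathfrak n}$ is finite, this yields $S_{\mathfrak n}\leq \beta_1/(1-\kappa a)$ uniformly in $\mathfrak n$. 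Finiteness of each $S_{\mathfrak n}$ holds by induction: $\mathcal Z_{i,T}(0)$ is a finite matrix once the earlier terms are, and the ODE is linear on a finite interval. Hence $\sum_i\|\int_0^T\mathcal Z_{i,T}\dd t\|\leq\sum_i\beta_i<\infty$, so the series defining $Q_T$ converges absolutely and $Q_T$ exists.

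For the infinite-horizon statement we repeat the argument with $T$ replaced by $\infty$. The only extra points are that $\beta_{1,\infty}=\int_0^\infty\|\mathcal Z_{1,\infty}\|\dd t$ and $\kappa_\infty$ are finite, and that each $\beta_{i,\infty}$ is finite. Under \eqref{stab_cond} we have $\sigma(\mathcal L^*)\subset\mathbb C_-$, which gives exponential decay of $\mathbb E\|\bar\Phi_w(t)\|^2$ by Remark \ref{remark_stability} applied to the dual system. This makes $\beta_{1,\infty}$ and $\kappa_\infty$ finite. The finiteness of each $\beta_{i,\infty}$ is exactly Proposition \ref{int_mathcal_Zi_exists}, whose argument yields finite norm integrals. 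The same Cauchy-product bound then gives $\sum_i\beta_{i,\infty}\leq\beta_{1,\infty}/(1-\kappa_\infty a_\infty)$.

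The main obstacle is justifying the absorption step. It requires that each partial sum $S_{\mathfrak n}$ is a priori finite, and that we bound the truncated convolution by $a\,S_{\mathfrak n}$ rather than by the full series. Both points are handled by working with finite $\mathfrak n$ as above.
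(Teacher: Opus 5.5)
Your proposal is correct and follows the paper's proof essentially step for step. It derives the same linear recursion $\bar b_i\le \bar k_{\mathcal H}\sum_{k=1}^{i-1} b_k\bar b_{i-k}$ from the stochastic representation, uses the same Cauchy-product reordering and the same absorption to $\sum_{i\le\mathfrak n}\bar b_i\le \bar b_1/(1-\bar k_{\mathcal H}\sum_k b_k)$, and then replaces $T$ by $\infty$; your explicit check that each partial sum is a priori finite before absorbing is a small rigor point the paper leaves implicit.
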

\begin{proof}
We define a sequence $(\bar b_i)$ by  $\bar b_i:=\int_0^T \left\|\mathcal Z_{i, T}(t)\right\|\dd t$ for $i\geq 1$ and additionally use the notation of Proposition \ref{prop_reach_gram_exist}. Based on the representation $\mathcal Z_{i, T}(t)=\mathbb E[\bar \Phi_w(t)\mathcal Z_{i, T}(0)\bar \Phi_w(t)^\top]$, where $\mathcal Z_{1, T}(0)=C^\top C$ and $\mathcal Z_{i, T}(0)=\mathcal H\Big(\sum_{k=1}^{i-1}\int_0^T Z_{k, T}(s)\dd s\otimes \int_0^T \mathcal Z_{i-k, T}(s) \dd s\Big)\mathcal H^\top $ for $i\geq 2$, we can now follow the steps in \eqref{catalan_inequality2} to obtain \begin{align}\label{catalan_inequality3}
    \bar b_i\leq \bar k_{\mathcal H} \sum_{k=1}^{i-1} b_k \bar b_{i-k}
                                     \end{align}
with $\bar k_{\mathcal H}:=\int_0^T \mathbb E\left\| \bar \Phi_w(t) \mathcal H\right\|^2\dd t$ for $i\geq 2$. From \eqref{catalan_inequality3}, we conclude that
\begin{align*}
    \sum_{i=1}^\mathfrak n\bar b_i\leq \bar b_1+\bar k_{\mathcal H} \sum_{i=2}^\mathfrak n\sum_{k=1}^{i-1} b_k \bar b_{i-k}=\bar b_1+\bar k_{\mathcal H} \sum_{k=1}^{\mathfrak n-1} \sum_{j=1}^{\mathfrak n-k}b_k \bar b_{j}\leq \bar b_1+\bar k_{\mathcal H} \sum_{k=1}^{\infty} b_k \sum_{j=1}^{\mathfrak n}\bar b_{j}.
                                     \end{align*}
Rearranging this inequality yields
\begin{align*}
    \sum_{i=1}^\mathfrak n\bar b_i\leq \frac{\bar b_1}{1-\bar k_{\mathcal H} \sum_{k=1}^{\infty} b_k}
                                     \end{align*}
given $1-\bar k_{\mathcal H} \sum_{k=1}^{\infty} b_k>0$. This means that the increasing sequence $\Big(\sum_{i=1}^\mathfrak n\bar b_i\Big)$ is bounded and hence convergent if $\bar k_{\mathcal H} \sum_{k=1}^{\infty} b_k<1$. Replacing $T$ by $\infty$ in the above proof, the result also follows for $Q$. This concludes the proof.
 \end{proof}
We also show that the observability Gramians are associated with certain matrix equations. This is used in Sections \ref{sec_dom_sub}, \ref{sec_exact_rom} and \ref{BT_bound} later.
 \begin{prop}
 If $P_T=\sum_{i=1}^\infty\int_0^T Z_{i, T}(s)\dd s$ and $Q_T=\sum_{i=1}^\infty\int_0^T \mathcal Z_{i, T}(s)\dd s$ exist, then $Q_T$ satisfies \begin{align}\label{eq_for_QT}
   \sum_{i=1}^\infty \mathcal Z_{i, T}(T)- C^\top C =\mathcal L^*\big(Q_T\big)+ \mathcal H\big(P_T\otimes Q_T\big)\mathcal H^\top.
                                                \end{align}
  If \eqref{stab_cond} holds,  $P=\sum_{i=1}^\infty\int_0^\infty  Z_{i, \infty}(s)\dd s$ exists and $Q=\sum_{i=1}^\infty\int_0^\infty \mathcal Z_{i, \infty}(s)\dd s$ is well-defined, then $Q$ is a solution of     \begin{align}\label{eq_for_Q}
   - C^\top C =\mathcal L^*\big(Q\big)+ \mathcal H\big(P\otimes Q\big)\mathcal H^\top.
                        \end{align}
\end{prop}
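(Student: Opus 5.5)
The plan mirrors the proof of the corresponding result for the reachability Gramians. Integrating \eqref{eq_for_Z1_ad} and \eqref{eq_for_Z2_ad} over $[0,T]$ and using the linearity of $\mathcal L^*$ gives
\begin{align*}
\mathcal Z_{i,T}(T)-\mathcal Z_{i,T}(0)=\mathcal L^*\Big(\int_0^T\mathcal Z_{i,T}(s)\dd s\Big),\quad i\geq 1.
\end{align*}
We then sum over $i=1,\dots,\mathfrak n$ and insert $\mathcal Z_{1,T}(0)=C^\top C$ and the initial values $\mathcal Z_{i,T}(0)$ for $i\geq 2$. Reindexing the double sum as in \eqref{basic_result} yields
\begin{align*}
\sum_{i=1}^{\mathfrak n}\mathcal Z_{i,T}(T)-C^\top C=\mathcal L^*\Big(\sum_{i=1}^{\mathfrak n}\int_0^T\mathcal Z_{i,T}(s)\dd s\Big)+\mathcal H\Big(\sum_{j_1,j_2\geq 1\atop j_1+j_2\leq\mathfrak n}\int_0^T Z_{j_1,T}(s)\dd s\otimes\int_0^T\mathcal Z_{j_2,T}(s)\dd s\Big)\mathcal H^\top .
\end{align*}

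Next we let $\mathfrak n\to\infty$. The $\mathcal L^*$ term converges to $\mathcal L^*(Q_T)$ because $Q_T$ exists and $\mathcal L^*$ is continuous. The truncated Kronecker double sum converges to $P_T\otimes Q_T$. To see this, note that its terms are positive semidefinite and the partial sums are sandwiched between $(\sum_{j\le \mathfrak n/2}\cdot)\otimes(\sum_{j\le\mathfrak n/2}\cdot)$ and the full $\mathfrak n$-sums. Alternatively, one can use absolute convergence via norms, since $\|X\otimes Y\|=\|X\|\|Y\|$. Finally, $\sum_i\mathcal Z_{i,T}(T)$ converges because every other term in the identity converges. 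This gives \eqref{eq_for_QT}.

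For \eqref{eq_for_Q}, we first keep $\mathfrak n$ fixed and send $T\to\infty$ in the finite identity. Remark \ref{rel_mathcalZs} and Proposition \ref{prop_lim_upper_bound} give convergence of each $\int_0^T\mathcal Z_{i,T}\dd s$ and $\int_0^T Z_{i,T}\dd s$ to the infinite-horizon integrals. It remains to show $\mathcal Z_{i,T}(T)\to 0$. We use $0\leq\mathcal Z_{i,T}(T)\leq\mathcal Z_{i,\infty}(T)$ from Remark \ref{rel_mathcalZs}, together with $\mathcal Z_{i,\infty}(T)=\expn^{\mathcal L^*T}\mathcal Z_{i,\infty}(0)\to0$. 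The latter holds because $\sigma(\mathcal L^*)\subset\mathbb C_-$ under \eqref{stab_cond}, as in the proof of Proposition \ref{int_mathcal_Zi_exists}. Finally, letting $\mathfrak n\to\infty$ with the same convergence argument for the Kronecker sum, now using that $P$ and $Q$ exist, yields $-C^\top C=\mathcal L^*(Q)+\mathcal H(P\otimes Q)\mathcal H^\top$.

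The main care point is the $\mathfrak n\to\infty$ limit of the truncated Cauchy-type Kronecker sum, where the truncation $j_1+j_2\le\mathfrak n$ must be shown to converge to the full product. Absolute summability of $\sum_i\|\int Z_{i}\|$ and $\sum_i\|\int\mathcal Z_i\|$ handles this via dominated convergence for double series. If only convergence of the sums is known, the positive semidefinite monotone sandwich argument applies instead.
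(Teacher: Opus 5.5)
Your proposal is correct and follows the paper's proof essentially step by step. You integrate the ODEs for $\mathcal Z_{i,T}$, sum up to $\mathfrak n$, and let $\mathfrak n\to\infty$ to get \eqref{eq_for_QT}; for \eqref{eq_for_Q} you first let $T\to\infty$ at fixed $\mathfrak n$, using Proposition \ref{prop_lim_upper_bound}, Remark \ref{rel_mathcalZs} and $0\leq \mathcal Z_{i,T}(T)\leq \mathcal Z_{i,\infty}(T)\to 0$, and then let $\mathfrak n\to\infty$. Your Loewner-order sandwich for the truncated Kronecker double sum, and your observation that $\sum_i \mathcal Z_{i,T}(T)$ converges because every other term in the identity does, are justifications the paper leaves implicit.
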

\begin{proof}
We integrate \eqref{eq_for_Z1_ad} and \eqref{eq_for_Z2_ad} yielding \begin{align*}
 \mathcal Z_{i, T}(T)-\mathcal Z_{i, T}(0) = \mathcal L^*\Big(\int_0^T \mathcal Z_{i, T}(s)\dd s\Big), \quad i\geq 1.                                                                                                  \end{align*}
Summing up the first $\mathfrak n$ of these equations and using $\mathcal Z_{1, T}(0)=C^\top C$ as well as $\mathcal Z_{i, T}(0)=\mathcal H\Big(\sum_{j_i, j_2\geq 1 \atop j_1+j_2=i}\int_0^T Z_{j_1, T}(s)\dd s\otimes \int_0^T \mathcal Z_{j_2, T}(s) \dd s\Big)\mathcal H^\top$ for $i\geq 2$, we obtain   \begin{align}\label{basic_result_ad}
 \sum_{i=1}^{\mathfrak n} \mathcal Z_{i, T}(T) - C^\top C = \mathcal L^*\Big(\sum_{i=1}^{\mathfrak n}\int_0^T \mathcal Z_{i, T}(s)\dd s\Big) + \mathcal H\Big(\sum_{j_i, j_2\geq 1 \atop j_1+j_2\leq \mathfrak n}\int_0^T Z_{j_1, T}(s)\dd s\otimes \int_0^T \mathcal Z_{j_2, T}(s) \dd s\Big)\mathcal H^\top.                                                                                             \end{align}
 We take the limit as $\mathfrak n\to\infty$ in \eqref{basic_result_ad} and obtain \eqref{eq_for_QT}. Taking $T\to\infty$ in \eqref{basic_result_ad} and using Proposition \ref{prop_lim_upper_bound} as well as Remark \ref{rel_mathcalZs} yields \begin{align}\label{basic_result2_ad}
 - C^\top C = \mathcal L^*\Big(\sum_{i=1}^{\mathfrak n}\int_0^\infty \mathcal Z_{i, \infty}(s)\dd s\Big) + \mathcal H\Big(\sum_{j_i, j_2\geq 1 \atop j_1+j_2\leq \mathfrak n}\int_0^\infty Z_{j_1, \infty}(s)\dd s\otimes \int_0^\infty \mathcal Z_{j_2, \infty}(s) \dd s\Big)\mathcal H^\top.                                                                                             \end{align}
This result relies on \eqref{stab_cond} implying $\mathcal Z_{i, T}(T)\to 0$ for all $i\geq 1$ as $T\to \infty$, see  Remark \ref{rel_mathcalZs} and the proof of Proposition \ref{int_mathcal_Zi_exists}. The proof is concluded by taking $\mathfrak{n}\to\infty$ in \eqref{basic_result2_ad}.
\end{proof}
Note that the observability Gramians used in this paper lead to the same matrix equations as in \cite{morBenG24}.

\subsection{Identifying dominant subspaces based on Gramians}\label{sec_dom_sub}

In this section, we identify two subspaces in which the state $x$ and the output $y$, respectively, can be well-approximated. To illustrate this, let us use a general orthonormal basis $(v_k)$ of $\mathbb R^n$, giving the representation \begin{align*}
   x(t)=\sum_{k=1}^n\langle x(t), v_k\rangle\, v_k.                                                                                                                                                                                                                                                                                                                                                                                                                                                          \end{align*}
Let us assume that $P_T$ as well as $P$ exist and that the controls $u, \mathfrak u\in L^2_T$ satisfy \eqref{control_cond1}, i.e.,\begin{align}\label{def_fuu}
f(u, \mathfrak{u}):=\sum _{i=1}^\infty\Big(\exp\Big\{\int_0^T \left\|\mathfrak u(s)\right\|^2 \dd s\Big\}\Big)^{2i-1}\sum_{\tau\in\mathcal T_i}\int_{D(\tau; T)}\prod_{\ell\in\leaves(\tau)}\|u(t_\ell)\|^2\dd(t_v)_{v\in\tau}<\infty.
\end{align}
Then, by Theorem \ref{thm_gramian_nased bound} with $C=v_k^\top$ and Corollary \ref{x_eq_sum}, we have
 \begin{align}\label{dominant_subspaceest}
\sup_{t\in[0, T]}\left\vert\langle x(t), v_k\rangle\right\vert
\leq  \sqrt{v_k^\top P_T v_k}\sqrt{f(u, \mathfrak{u})}\leq  \sqrt{v_k^\top P v_k}\sqrt{f(u, \mathfrak{u})}.
\end{align}
Therefore, the direction $v_k$ is of low relevance for $x$ if $v_k^\top P_T v_k$ or $v_k^\top P v_k$ is small. Since $P$ and $P_T$ are positive semidefinite matrices, we may further assume that $v_k$ is an eigenvector of either $P$ or $P_T$ associated with an eigenvalue $\lambda_k \geq 0$. In this case, $v_k$ is negligible whenever $\lambda_k$ is small. Consequently, the eigenspaces corresponding to small eigenvalues of $P$ or $P_T$ have low relevance for the state variable and can therefore be truncated. Secondly, we consider the difference \begin{align*}
  y(t)-C\sum_{j=1 \atop j\neq k}^n\langle x(t), v_j\rangle\, v_j=
  Cv_k \langle x(t), v_k\rangle = Cv_k v_k^\top x(t).
  \end{align*}
  Exploiting Theorem \ref{thm_gramian_nased bound} with $C$ replaced by $Cv_k v_k^\top$, we obtain
  \begin{align}\nonumber
\sup_{t\in[0, T]}\Big\| y(t)-C\sum_{j=1 \atop j\neq k}^n\langle x(t), v_j\rangle\, v_j\Big\|
&\leq  \sqrt{\trace\big(Cv_k v_k^\top P_T (Cv_k v_k^\top)^\top\big)}\sqrt{f(u, \mathfrak{u})}\\ \label{dom_sub_y_PT}
&= \sqrt{v_k^\top P_T v_k} \sqrt{v_k^\top C^\top C v_k}\sqrt{f(u, \mathfrak{u})}\\ \label{dom_sub_y_P}
&\leq \sqrt{v_k^\top P v_k} \sqrt{v_k^\top C^\top C v_k}\sqrt{f(u, \mathfrak{u})}.
\end{align}
Therefore, removing $v_k$ has little impact on the output if \eqref{dom_sub_y_PT} or \eqref{dom_sub_y_P} are small. If $v_k$ is an eigenvector of $P_T$ or $P$, respectively, then a small eigenvalue $\lambda_k$ ensures that the first factor in either \eqref{dom_sub_y_PT} or \eqref{dom_sub_y_P} is small. However, the output approximation can be further improved by incorporating information from $C$ into the reduced model. In particular, it is beneficial to choose a basis $(v_k)$ such that $v_k^\top C^\top C v_k$ is also small.

Now let $v_k$ be an eigenvector of $Q_T$ or $Q$, respectively, corresponding to a small eigenvalue $\mu_k \geq 0$. We now show that this implies $v_k^\top C^\top C v_k$ is small. To see this, multiply \eqref{eq_for_QT} from the left by $v_k^\top$ and from the right by $v_k$, yielding
\begin{align}\label{dominat_direction_q}
v_k^\top C^\top C v_k
= -2\mu_k v_k^\top A v_k
- v_k^\top \mathcal N (I\otimes Q_T) \mathcal N^\top v_k
- v_k^\top \mathcal H (P_T\otimes Q_T)\mathcal H^\top v_k
+ \sum_{i=1}^\infty v_k^\top \mathcal Z_{i,T}(T)v_k.
\end{align}
Here, we exploit the definition of $\mathcal L^*$ in \eqref{lyap_op_adjoint}. The term $2\mu_k v_k^\top A v_k$ is small since $\mu_k$ is small. Furthermore, $v_k^\top \mathcal N (I\otimes Q_T)\mathcal N^\top v_k$ and $v_k^\top \mathcal H (P_T\otimes Q_T)\mathcal H^\top v_k$ are nonnegative. By the monotone convergence theorem, we also obtain
\begin{align*}
\mu_k = v_k^\top Q_T v_k
= \sum_{i=1}^\infty \int_0^T v_k^\top \mathcal Z_{i,T}(s)v_k \, \dd s= \int_0^T \sum_{i=1}^\infty  v_k^\top \mathcal Z_{i,T}(s)v_k \, \dd s.
\end{align*}
Therefore, we expect $\sum_{i=1}^\infty v_k^\top \mathcal Z_{i,T}(T)v_k$ to be small as well. If $Q$ is considered instead, this series is not present. Moreover, all other arguments are still valid. Consequently, $v_k^\top C^\top C v_k$ is expected to be small if $v_k$ is an eigenvector of $Q_T$ or $Q$, respectively. This suggests that it is beneficial if the Gramians coincide, i.e., $P_T = Q_T$ or $P = Q$, since this allows both $v_k^\top P_T v_k$ (or $v_k^\top P v_k$) and $v_k^\top C^\top C v_k$ to be small when $v_k$ corresponds to a small eigenvalue.

This observation motivates the algorithm proposed below. In fact, in this case the Gramians will additionally be diagonal, so that $(v_k)$ coincides with the canonical basis of $\mathbb R^n$, which simplifies the truncation of irrelevant variables.
The bounds in \eqref{dom_sub_y_PT} and \eqref{dom_sub_y_P} further suggest that alternative choices of observability Gramians are possible, provided they guarantee that $v_k^\top C^\top C v_k$ is sufficiently small. While this is indeed the case in principle, an additional key requirement is that the observability Gramians admit a simultaneous diagonalization with the reachability Gramians via a state-space transformation, as this property is essential for our reduction algorithm. The suitability of the choices $Q_T$ and $Q$ will become apparent in the following two sections.

\subsection{Exact reduced-order models}\label{sec_exact_rom}

Before we study the above mentioned state-space transformation, we investigate exact reduced systems based on Gramians. This is because a state-space transformation requires the Gramians to be invertible. This can always be achieved by removing the kernels of these generally positive semidefinite matrices. We will see that this reduction comes without any error. In fact, this aspect was touched in \cite{morBenG24}, where unreachable and unobservable states were linked to kernels of Gramians. However, the exact reduced systems were not worked out.
First, let
\begin{align}\label{imP_basis}
V=\begin{bmatrix}                                                                                                                                                                                                                                                                p_1 & \dots & p_{\hat n}                                                                                                                                                                                                                                                                                                                                                                                                  \end{bmatrix}
\end{align}
where $p_1, \dots,  p_{\hat n}$ are all orthonormal eigenvectors of either $P_T$ or $P$ associated with all non-zero eigenvalues and $\hat n\leq n$.
\begin{prop}\label{prop_remove_kerP}
We assume that $P_T$ (or $P$) exists.
Moreover, suppose that \eqref{stochstatenew} has a (unique) global solution on $[0, T]$ and let $V$ be the matrix defined in \eqref{imP_basis}. Then, we have $x(t)=V \hat x(t)$, where  $\hat x(t)= \begin{bmatrix}
 \langle x(t), p_1\rangle &\dots & \langle x(t), p_{\hat n}\rangle
  \end{bmatrix}^\top\in\mathbb R^{\hat n}$ satisfies \begin{subequations}\label{red_sys_zero_error}\begin{align}\label{redstate_zero_error}
             \dot {\hat x}(t)&=\hat A \hat x(t)+ \hat B u(t)+ \hat N(\mathfrak u(t)\otimes \hat x(t))+ \hat H(\hat x(t)\otimes \hat x(t)),\quad \hat x(0) = 0,\\ \label{output_red_zero_error}
            y(t) &= \hat C \hat x(t),\quad t\in [0, T].
\end{align}
\end{subequations}
The output $y$ is the same as the one in \eqref{output_eq} and \begin{align*}
\hat A=V^\top A V, \quad \hat B= V^\top B,\quad \hat N= V^\top N (I \otimes V), \quad \hat H=V^\top H (V \otimes V), \quad \hat C=C V.
        \end{align*}
\end{prop}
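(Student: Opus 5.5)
The central step is to show that $x(t)\in\operatorname{im}V$ for all $t\in[0,T]$, i.e.\ $VV^\top x(t)=x(t)$. Once this is known, everything else is algebraic. Setting $\hat x=V^\top x$, we get $x=V\hat x$. Multiplying \eqref{stochstatenew} from the left by $V^\top$ and inserting $x=V\hat x$ gives $\mathfrak u\otimes V\hat x=(I\otimes V)(\mathfrak u\otimes\hat x)$ and $V\hat x\otimes V\hat x=(V\otimes V)(\hat x\otimes\hat x)$. This is exactly \eqref{redstate_zero_error} with the stated reduced matrices, together with $\hat x(0)=0$. The output identity $y=Cx=CV\hat x=\hat C\hat x$ follows at once.

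To prove $x(t)\in\operatorname{im}V$, I would use the Gramian-based bound. Let $v$ be any unit vector in $\ker P_T$, respectively $\ker P$, which is the orthogonal complement of $\operatorname{im}V$. The estimate \eqref{dominant_subspaceest} with $v_k=v$ gives $\sup_t|\langle x(t),v\rangle|\le\sqrt{v^\top P_T v}\sqrt{f(u,\mathfrak u)}=0$, provided $f(u,\mathfrak u)<\infty$ and $x=\sum_i x_i$.

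The main obstacle is that the proposition only assumes existence of a global solution, not condition \eqref{control_cond1}. I would therefore avoid the series bound and argue term by term through the representation $x_i$ of Theorem \ref{tree_rep}. For any $v\in\ker P_T$ we have $v^\top\int_0^TZ_{i,T}\,ds\,v=0$ for every $i$, since each summand of $P_T$ is positive semidefinite. Theorem \ref{bound_tree_expression} then gives $\int_{D(\tau;t)}\|v^\top F_\tau\|^2=0$ for each tree, so $v^\top x_i(t)=0$ for all $i$.

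I still need $x=\sum_i x_i$ without a smallness assumption on the controls. I would first handle small $t$, or equivalently controls with \eqref{cond_con_u} on a short interval, and then extend by continuation. Alternatively, and more cleanly, I would avoid the series entirely via an invariance argument. Let $\mathcal V=\operatorname{im}P_T$. From \eqref{eq_for_PT}, or better from the structure of the $Z_{i,T}$, I would try to show that $\mathcal V$ contains $\operatorname{im}B$, is invariant under $A$ and under each $N_k$, and satisfies $H(\mathcal V\otimes\mathcal V)\subseteq\mathcal V$.

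These inclusions can be checked through the stochastic representation $Z_{i,T}(s)=\mathbb E[\Phi_w(s)Z_{i,T}(0)\Phi_w(s)^\top]$. If $v\perp\mathcal V$, then $v^\top\Phi_w(s)B=0$ almost surely for all $s$. Differentiating at $s=0$ using the Itô expansion gives $v^\top B=0$, and higher-order terms yield $v^\top A^jB=0$ and invariance under the Krylov-type structure generated by $A$ and $N_k$. Similarly, $v^\top\Phi_w H(M\otimes M')=0$ for $M,M'$ spanning $\mathcal V$. Since the smallest subspace containing $\operatorname{im}B$ that is invariant under these maps is contained in $\mathcal V$, the ODE \eqref{stochstatenew} restricted to $\mathcal V$ has its vector field tangent to $\mathcal V$. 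Uniqueness of the global solution then forces $x(t)\in\mathcal V$.

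I expect this invariance identification to be the technically delicate step. In particular, showing that $\ker P_T$ is orthogonal to the full reachable subspace requires using all $i$ and the Kronecker structure. The cases $P_T$ and $P$ are handled identically.
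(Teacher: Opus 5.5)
Your plan is sound and its architecture is the paper's. You show that $\mathcal V=\im P_T$ (or $\im P$) contains $\im B$, is invariant under $A$ and every $N_k$, and satisfies $H(\mathcal V\otimes\mathcal V)\subseteq\mathcal V$. The vector field is then tangent to $\mathcal V$, uniqueness keeps $x(t)\in\mathcal V$, and you project with $V^\top$. You were right to abandon the series-based routes, since nothing like \eqref{control_cond1} is assumed. Restarting on short intervals is also not covered by Theorem \ref{tree_rep}, which needs a zero initial state.

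The difference lies in the invariance step, which carries all the content. There the paper is much shorter. Take $v\in\ker P_T$. Continuity and nonnegativity of $s\mapsto v^\top Z_{i,T}(s)v$ give $Z_{i,T}(T)v=0$. Testing \eqref{eq_for_PT} with $v$ on both sides leaves
$v^\top BB^\top v+v^\top N(I\otimes P_T)N^\top v+v^\top H(P_T\otimes P_T)H^\top v=0$.
Hence $B^\top v=0$, $(I\otimes P_T)N^\top v=0$ and $(P_T\otimes P_T)H^\top v=0$. Multiplying \eqref{eq_for_PT} by $v$ from the right then gives $P_TA^\top v=0$. These are exactly the four closure properties, obtained for $\mathcal V$ itself.

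Your stochastic version of this step is only sketched, and as written it has two defects.

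First, one cannot differentiate $s\mapsto v^\top\Phi_w(s)B$ at $s=0$. A rigorous version uses that this continuous semimartingale vanishes identically. Then its quadratic variation vanishes, and afterwards its drift, so $v^\top N_k\Phi_w B\equiv 0$ and $v^\top A\Phi_w B\equiv0$, which can be iterated. A simpler deterministic version starts from $Z_{i,T}(s)v=0$ on $[0,T]$. Differentiating $\dot Z_{i,T}=\mathcal L(Z_{i,T})$ and testing with $v^\top$ gives $Z_{i,T}(s)N_k^\top v=0$, and then $Z_{i,T}(s)A^\top v=0$. So $\ker P_T=\bigcap_i\{v: Z_{i,T}(\cdot)v\equiv 0\}$ is $A^\top$- and $N_k^\top$-invariant. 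In addition, $Z_{i,T}(0)v=0$ yields $B^\top v=0$ and $(P_T\otimes P_T)H^\top v=0$.

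Second, your concluding inference does not follow. Let $\mathcal R$ be the smallest subspace containing $\im B$ and closed under $A$, the $N_k$ and $H(\cdot\otimes\cdot)$. Knowing $\mathcal R\subseteq\mathcal V$ does not make the vector field tangent to $\mathcal V$. You can either run the tangency and uniqueness argument on $\mathcal R$, where tangency holds by construction, and conclude $x(t)\in\mathcal R\subseteq\mathcal V$. Or you can prove the closure properties for $\mathcal V$ directly, as above.

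Completed this way, your route gives slightly more than the paper states: $\im P_T=\mathcal R$, because each $Z_{i,T}(s)$ has range in $\mathcal R$. So the Gramian kernel is exactly the orthogonal complement of this reachable subspace. For the proposition itself, the paper's use of \eqref{eq_for_PT} is the shorter path.

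Finally, ``uniqueness forces $x(t)\in\mathcal V$'' needs the continuation step the paper spells out. Solve the restricted equation on $\mathcal V$ (Carath\'eodory), identify it with $x$ by uniqueness, and extend up to $T$ using a supremum argument.
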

\begin{proof}
We show the result using $P_T$ only as the arguments for $P$ are identical. Now, let $v\in\kernel(P_T)$ be arbitrary. Multiplying \eqref{eq_for_PT} with $v^\top$ from the left and with $v$ from the right, we obtain \begin{align}\label{some_proof_eq0}
v^\top BB^\top v +v^\top N (I\otimes P_T) N^\top v
+ v^\top H (P_T\otimes P_T)H^\top v
=  \sum_{i=1}^\infty v^\top Z_{i,T}(T)v.
\end{align}
We know that $0=v^\top P_T v=\sum_{i=1}^\infty \int_0^T v^\top Z_{i,T}(s)v\,\dd s$. This implies that $v^\top Z_{i,T}(T) v=0$ for all $i\geq 1$ meaning that the right-hand side in \eqref{some_proof_eq0} disappears. As all terms on the left-hand side of \eqref{some_proof_eq0} are nonnegative, we find that $B^\top v=0$ and $(I\otimes P_T) N^\top v=0$. We further have that
$(P_T\otimes P_T)H^\top v=0$. We apply this result when multiplying \eqref{eq_for_PT} with $v$ from the right leading to $P_T A^\top v=0$. We use the right-hand side of \eqref{stochstatenew} to define $f(t, x) = Ax+Bu(t)+ N (\mathfrak u(t)\otimes x)+H(x\otimes x)$. Given that $x\in\im(P_T)$ and exploiting the above identities, we have \begin{align*}
f(t, x)^\top v=  x^\top A^\top v+u(t)^\top B^\top v+ (\mathfrak u(t)^\top \otimes x^\top)N^\top v +(x^\top\otimes x^\top)H^\top v    = 0                                                                                                                                                                                                                                                                                                                                                                                                                                                                                                                                                                                                            \end{align*}
meaning that $f(t, x)\in\im(P_T)$ for $t\in [0, T]$ if $x\in\im(P_T)$. Let us now consider the equation\begin{align}\label{ODE_z}
  \dot z(t)= f_{P_T}(t, z(t)), \quad z(0)=0,                                                                                                                                                                                                             \end{align}
where $f_{P_T}:\im(P_T)\to \im(P_T)$ is the restriction of $f$ on $\im(P_T)$. By Carath\' eodory's existence theorem, there is a local solution $z$ for \eqref{ODE_z} on $[0, \delta)$ for some $\delta>0$. Since $f(t, x)=f_{P_T}(t, x)$ for $x\in\im(P_T)$, $z$ is also a solution of \eqref{stochstatenew}. Therefore, $x\equiv z$ on $[0, \delta)$, so that $x(t)\in\im(P_T)$ for $t\in [0, \delta)$. We define \begin{align*}
 \tau:=\sup\{t\in[0, T]: x(s)\in\im(P_T), s\in[0, t]\}.                                                                                                                                                                                                                                                                                                                                                                                                                                                                                                                                                                                                                                                                                                                                                                                                                                        \end{align*}
 We know that $\tau\geq \delta>0$. Let us now assume that $\tau<T$. Since $x$ is continuous and $\im(P_T)$ is closed, we have $x(\tau)\in\im(P_T)$. Considering
 \begin{align*}
  \dot z(t)= f_{P_T}(t, z(t)), \quad z(\tau)=x(\tau)\in\im(P_T),                                                                                                                                                                                                             \end{align*}
we can now conclude that $x(t)=z(t)\in\im(P_T)$ for $t\in[\tau, \tau+\epsilon)$ for some $\epsilon>0$. This contradicts the definition of $\tau$. Hence, we have $x(t)\in\im(P_T)$ for all $t\in[0, T]$. As the columns of $V$ represent an orthonormal basis of $\im(P_T)$, we obtain  \begin{align}\label{someest}
x(t)=\sum_{k=1}^{\hat n}\langle x(t), p_k\rangle\, p_k = V \hat x(t).                                                                                                                                                                                                                                                                                                                                                                                                                                                                                                                                                                                                                                                                                                                                                                                                                                                                                                                                                                                                                                                                                                                                                                                                                                                                                                                                                                                                                                                                                                                                                                                                                                                                                                                                                                         \end{align}
Inserting \eqref{someest} into \eqref{original_system} and multiplying the resulting state equation with $V^\top$ from the left, the result follows.
\end{proof}
We briefly ceck that the reachability Gramian of the exact reduced system \eqref{red_sys_zero_error} is positive definite.
\begin{prop}\label{prop_inv_reach_gram}
Assuming the existence of the reachability Gramians, let $V$ be the matrix of orthogonal eigenvectors of $P_T$ (or $P$) defined in \eqref{imP_basis}. Then, system \eqref{red_sys_zero_error} has an invertible reachability Gramian $D=V^\top P_T V$ (or $D=V^\top P V$), where $D$ is the diagonal matrix containing all non-zero eigenvalues of  $P_T$ (or $P$).
\end{prop}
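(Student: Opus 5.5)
The plan is to show that the recursively defined matrix functions of the reduced system \eqref{red_sys_zero_error} are compressions of the original ones. Denote by $\hat Z_{i,T}$, $i\geq 1$, the solutions of \eqref{eq_for_Z1} and \eqref{eq_for_Z2} with $(A,B,N,H)$ replaced by $(\hat A,\hat B,\hat N,\hat H)$, and let $\hat{\mathcal L}$ be the corresponding reduced Lyapunov operator. The central claim, proved by induction on $i$, is
\begin{align*}
\hat Z_{i,T}(t)=V^\top Z_{i,T}(t)V \quad \text{for all } t\in[0,T],\ i\geq 1.
\end{align*}
Summing over $i$ and integrating then gives that the reduced Gramian equals $V^\top P_T V$. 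Since the columns of $V$ are orthonormal eigenvectors of $P_T$ for its non-zero eigenvalues, $V^\top P_TV$ is the diagonal matrix $D$ of these eigenvalues, which is invertible. The case $T=\infty$, with $P$ and $Z_{i,\infty}$, is handled verbatim.

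The first step, which I expect to carry the real content, is to show that every $Z_{i,T}(t)$ lives on $\im(P_T)$. With $\Pi:=VV^\top$ the orthogonal projector onto $\im(P_T)$, the goal is
\begin{align*}
Z_{i,T}(t)=\Pi Z_{i,T}(t)\Pi \quad \text{for all } t \text{ and } i.
\end{align*}
Let $v\in\kernel(P_T)$. As in the proof of Proposition \ref{prop_remove_kerP},
\begin{align*}
0=v^\top P_Tv=\sum_{i=1}^\infty\int_0^T v^\top Z_{i,T}(s)v\,\dd s .
\end{align*}
All integrands are nonnegative and continuous in $s$, because $Z_{i,T}(s)=\mathbb E[\Phi_w(s)Z_{i,T}(0)\Phi_w(s)^\top]\geq 0$ solves a linear ODE. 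Hence $v^\top Z_{i,T}(s)v=0$ for all $s\in[0,T]$, and positive semidefiniteness gives $Z_{i,T}(s)v=0$. Since $\kernel(P_T)=\im(P_T)^\perp$, this yields $Z_{i,T}(s)=\Pi Z_{i,T}(s)\Pi$. Integrating, the same holds for $\int_0^T Z_{i,T}(s)\,\dd s$.

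Next, set $W_i(t):=V^\top Z_{i,T}(t)V$, so that $Z_{i,T}(t)=VW_i(t)V^\top$. Differentiating and using \eqref{eq_for_Z1}/\eqref{eq_for_Z2} gives
\begin{align*}
\dot W_i = V^\top A V W_i + W_i V^\top A^\top V + V^\top N(I\otimes V)(I\otimes W_i)(I\otimes V^\top)N^\top V = \hat{\mathcal L}(W_i),
\end{align*}
where I use $I\otimes (VWV^\top)=(I\otimes V)(I\otimes W)(I\otimes V^\top)$. For the initial values, $W_1(0)=V^\top BB^\top V=\hat B\hat B^\top$. For $i\geq 2$, the induction hypothesis gives
\begin{align*}
\int_0^T Z_{k,T}\,\dd s=V\Big(\int_0^T\hat Z_{k,T}\,\dd s\Big)V^\top, \quad k<i.
\end{align*}
Together with $(VXV^\top)\otimes(VYV^\top)=(V\otimes V)(X\otimes Y)(V^\top\otimes V^\top)$ this yields
\begin{align*}
W_i(0)=\hat H\Big(\sum_{k=1}^{i-1}\int_0^T\hat Z_{k,T}\,\dd s\otimes\int_0^T\hat Z_{i-k,T}\,\dd s\Big)\hat H^\top .
\end{align*}
Uniqueness of solutions of the linear matrix ODE then gives $W_i=\hat Z_{i,T}$, which closes the induction.

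Finally, the reduced Gramian $\sum_i\int_0^T\hat Z_{i,T}\,\dd s=V^\top P_TV=D$ exists because $P_T$ does. The only delicate points are the continuity and positivity argument in the first step and, for $P$, the existence of the integrals over $[0,\infty)$. The latter is inherited from the unreduced system via the same identity with $T$ replaced by $\infty$.
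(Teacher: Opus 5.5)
Your proposal is correct and follows the paper's proof. Both arguments show that every $Z_{i,T}(t)$ annihilates $\kernel(P_T)$, write $Z_{i,T}=VW_iV^\top$, and compress the matrix ODEs with $V^\top(\cdot)V$ to identify the reduced system's functions, concluding via $P_T=VDV^\top$. You additionally make explicit the continuity argument, the Kronecker identities and the induction on the initial values, which the paper leaves implicit.
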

\begin{proof}
We conduct this proof only using $P_T$. The steps for $P$ are identical. Now, let $v\in \kernel(P_T)$ be arbitrary, then \begin{align*}
 0= v^\top P_T v= \sum_{i=1}^\infty\int_0^T \|Z_{i, T}(s)^{\frac{1}{2}} v\|^2\dd s                                                                                                                                                                                                                                                                                                                                                                                                                                                                                                                                                                                                                                                                                                                                                                                                                                                                                                                                                                                                                                                                                                                                                                                                                                                                                                                                                                                                                                                                                                                                                                                                                                                                                                                                                                                                                                                                                                                                                                                                                                                                                                                                                                                                                                                                                                                                                                                                                                                                                                                                                                              \end{align*}
 using that $Z_{i, T}$ is symmetric prositive semidefinite. Consequently, $v^\top Z_{i, T}(t)=0$ for all $t\in [0, T]$ and all $i\geq 1$. Therefore, the columns of $Z_{i, T}(t)$ take values in $\im(V)=\im(P_T)$. This means that we find matrices $M_i(t)$, so that $Z_{i, T}(t)=V M_i(t) V^\top$. Inserting this identities into \eqref{eq_for_Z1} and \eqref{eq_for_Z2} as well as multiplying the results with $V^\top$ from the left and with $V$ from the right, we see that $\sum_{i=1}^\infty\int_0^T M_i(s)\dd s = V^\top P_T V$ is the time-limited reachability Gramian of \eqref{red_sys_zero_error}. By the eigenvalue decomposition of $P_T$, we obtain $P_T= V D V^\top$ and the result follows.
\end{proof}
\begin{remark}
 Due to Proposition \ref{prop_inv_reach_gram} we can always assume without loss of generality that $P_T$ and $P$ are invertible. This is already crucial for the next step, where the removal of the kernel of an observability Gramian is discussed.
\end{remark}
 Now, let us introduce
\begin{align}\label{imQ_basis}
W=\begin{bmatrix}                                                                                                                                                                                                                                                                q_1 & \dots & q_{\hat {\mathfrak n}}                                                                                                                                                                                                                                                                                                                                                                                                  \end{bmatrix}
\end{align}
 where $q_1, \dots,  q_{\hat{\mathfrak n}}$ are all orthonormal eigenvectors of either $Q_T$ or $Q$ associated with all non-zero eigenvalues and $\hat {\mathfrak n}\leq n$.
%  For simplicity of the notation, $\hat n$ also denotes the number of non-zero eigenvalues of the observability Gramians. The number of zero eigenvalues of reachability and observability Gramians  generally differ.
\begin{prop}\label{prop_remove_kerQ}
Suppose that $Q_T$ (or $Q$) exists and that \eqref{stochstatenew} has a global solution on $[0, T]$. Further, let $W$ be the matrix of orthonormal eigenvectors of $Q_T$ (or $Q$) defined in \eqref{imQ_basis}. Moreover, assume w.l.o.g. that $P_T$ (or $P$) is invertible. Suppose that $H$ is symmetric meaning that $H(v\otimes z)=H(z\otimes v)$ for all $v, z\in\mathbb R^n$. Then, there exsit a state variable $\hat x$ with $y(t)= W \hat x(t)$ and \begin{align}\label{zero_error_q}
 \dot {\hat x}(t)&=\hat A \hat x(t)+ \hat B u(t)+ \hat N(\mathfrak u(t)\otimes \hat x(t))+ \hat H(\hat x(t)\otimes \hat x(t)),\quad \hat x(0) = 0                                                                                                                                                                                                                                                                                                                                                         \end{align}
 for $t\in[0, T]$, where
 \begin{align}\label{zero_error_q_mat}
 \hat A=W^\top A W, \quad \hat B= W^\top B,\quad \hat N= W^\top N (I \otimes W), \quad \hat H=W^\top H (W \otimes W).
 \end{align}
%  In addition, system \eqref{zero_error_q} has a reachability Gramian $W^\top P_T W$ (or $W^\top P W$) and an invertible observability Gramian $\mathcal D=W^\top Q_T W$ (or $\mathcal D=W^\top Q W$),  where $\mathcal D$ is the diagonal matrix containing all non-zero eigenvalues of  $Q_T$ (or $Q$).
\end{prop}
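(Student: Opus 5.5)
The plan is dual to the proof of Proposition \ref{prop_remove_kerP}. Instead of showing that the state remains in $\im(Q_T)$, I will show that $\kernel(Q_T)$ is invariant under all the system maps and is annihilated by $C$. Then $\hat x(t):=W^\top x(t)$ satisfies \eqref{zero_error_q} exactly, and the output is reconstructed as $y(t)=CW\hat x(t)$. I read the claimed identity ``$y(t)=W\hat x(t)$'' as $y=\hat C\hat x$ with $\hat C=CW$. I only treat $Q_T$, since the argument for $Q$ is the same without the terminal term.

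\emph{Step 1 (kernel identities).} Let $v\in\kernel(Q_T)$. Since $0=v^\top Q_T v=\int_0^T\sum_i v^\top\mathcal Z_{i,T}(s)v\,\dd s$ with continuous, positive semidefinite integrands, we get $v^\top\mathcal Z_{i,T}(T)v=0$ for all $i$. Multiplying \eqref{eq_for_QT} by $v^\top$ from the left and $v$ from the right then gives
\[
\|Cv\|^2+v^\top\mathcal N(I\otimes Q_T)\mathcal N^\top v+v^\top\mathcal H(P_T\otimes Q_T)\mathcal H^\top v=0 .
\]
All three terms are nonnegative, so $Cv=0$ and $(I\otimes Q_T^{1/2})\mathcal N^\top v=0$. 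The latter means $Q_T N_k v=0$ for each $k$, because $\mathcal N^\top v$ stacks the vectors $N_k v$.

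Likewise $(P_T^{1/2}\otimes Q_T^{1/2})\mathcal H^\top v=0$. Here $\mathcal H^\top v$ stacks the vectors $H_j v$, and invertibility of $P_T$ (assumed w.l.o.g.\ via Proposition \ref{prop_inv_reach_gram}) allows us to strip the first factor. This yields $Q_T H_j v=0$ for all $j$.

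Multiplying \eqref{eq_for_QT} by $v$ only from the right, every term except $Q_T A v$ now vanishes. The term $\sum_i\mathcal Z_{i,T}(T)v$ vanishes because $v^\top\mathcal Z_{i,T}(T)v=0$ and $\mathcal Z_{i,T}(T)\geq 0$ give $\mathcal Z_{i,T}(T)v=0$. Hence $Q_T Av=0$.

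Summarizing, $\kernel(Q_T)$ is invariant under $A$, each $N_k$, and each $H_j$, and it lies in $\kernel(C)$.

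\emph{Step 2 (quadratic term).} I expect this to be the main obstacle, and it is where the symmetry of $H$ enters. For $z\in\mathbb R^n$ and $v\in\kernel(Q_T)$ we have $H(z\otimes v)=\sum_j z_j H_j v\in\kernel(Q_T)$. By symmetry, $H(v\otimes z)=H(z\otimes v)\in\kernel(Q_T)$ as well. This covers the case $z\in\kernel(Q_T)$ too. Without symmetry, only one of the two mixed terms would be controlled, since the columns of $H$ are indexed by the first tensor factor in a fixed way.

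\emph{Step 3 (projection).} Decompose the global solution as $x=W\hat x+x_\perp$ with $\hat x=W^\top x$ and $x_\perp(t)\in\kernel(Q_T)=\im(W)^\perp$. Apply $W^\top$ to \eqref{stochstatenew} and expand $H(x\otimes x)$ into four terms by bilinearity. By Steps 1--2, $W^\top A x_\perp=0$ and $W^\top N(\mathfrak u\otimes x_\perp)=0$. Moreover, the three $H$-terms containing $x_\perp$ lie in $\kernel(Q_T)$ and are killed by $W^\top$. What remains is exactly \eqref{zero_error_q} with the matrices \eqref{zero_error_q_mat}, with $\hat x(0)=W^\top x(0)=0$. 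Finally, $Cx_\perp=0$ gives $y=Cx=CW\hat x$.
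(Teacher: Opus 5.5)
Your proof is correct and follows essentially the same route as the paper. For $v\in\kernel(Q_T)$ you extract $Cv=0$, $Q_T N_k v=0$, $Q_T H_j v=0$ (using invertibility of $P_T$) and $Q_T A v=0$ from \eqref{eq_for_QT}, use the symmetry of $H$ to handle the remaining mixed quadratic term, and then project with $W^\top$. You are right to read the output identity as $y=CW\hat x$. Your explicit argument that $\mathcal Z_{i,T}(T)v=0$, which is needed when \eqref{eq_for_QT} is multiplied by $v$ only from the right, fills in a step the paper leaves implicit.
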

\begin{proof}
We show the result using $Q_T$ only as the arguments for $Q$ are identical. Let $q_k$, $k>\hat{\mathfrak n}$, be the orthogonal eigenvectors of $Q_T$ corresponding to zero eigenvalues. Setting $v_k=q_k$, $k>\hat{\mathfrak n}$, in \eqref{dominat_direction_q}, we obtain \begin{align}\label{some_proof_eq}
q_k^\top C^\top C q_k +q_k^\top \mathcal N (I\otimes Q_T) \mathcal N^\top q_k
+ q_k^\top \mathcal H (P_T\otimes Q_T)\mathcal H^\top q_k
=  \sum_{i=1}^\infty q_k^\top \mathcal Z_{i,T}(T)q_k.
\end{align}
We know that $0=q_k^\top Q_T q_k=\sum_{i=1}^\infty \int_0^T q_k^\top\mathcal Z_{i,T}(s)q_k\dd s$. This implies that $q_k^\top\mathcal Z_{i,T}(T)q_k=0$ for all $i\geq 1$ meaning that the right-hand side in \eqref{some_proof_eq} disappears. As all terms on the left-hand side of \eqref{some_proof_eq} are nonnegative, we find that $Cq_k=0$ and $(I\otimes Q_T) \mathcal N^\top q_k=0$ for all $k>\hat{\mathfrak n}$. The last equation further implies that $Q_T N (I\otimes q_k)=0$ for all $k>\hat{\mathfrak n}$. We further have that
$(P_T\otimes Q_T)\mathcal H^\top q_k=0$ being equivalent to $(I\otimes Q_T)\mathcal H^\top q_k=0$ for all $k>\hat{\mathfrak n}$ using that $P_T$ is invertible. This implies that $Q_T H (I\otimes q_k)=0$ for all $k>\hat{\mathfrak n}$. We apply this result when multiplying \eqref{eq_for_QT} with $q_k$ from the right leading to $Q_T A q_k=0$ for all for all $k>\hat{\mathfrak n}$. With the above results in mind, we obtain  \begin{align*}
C x(t)=\sum_{k=1}^n\langle x(t), q_k\rangle\, C q_k=C\sum_{k=1}^{\hat {\mathfrak n}}\langle x(t), q_k\rangle\, q_k = CW \hat x(t),                                                                                                                                                                                                                                                                                                                                                                                                                                                                                                                                                                                                                                                                                                                                                                                                                                                                                                                                                                                                                                                                                                                                                                                                                                                                                                                                                                                                                                                                                                                                                                                                                                                                                                                                                                            \end{align*}
where $\hat x(t)= \begin{bmatrix}
 \langle x(t), q_1\rangle &\dots & \langle x(t), q_{\hat {\mathfrak n}}\rangle
  \end{bmatrix}^\top\in\mathbb R^{\hat n}$. Moreover, we have $W^\top A x(t)=W^\top A W \hat x(t) + \sum_{k=\hat {\mathfrak n}+1}^{n}\langle x(t), q_k\rangle\, W^\top A q_k= W^\top A W \hat x(t)$ using that $A q_k\in\ker(Q_T)$ for $k>\hat{\mathfrak n}$. In the same spirit, we obtain $W^\top N (\mathfrak u(t)\otimes x(t))=W^\top N (I\otimes x(t)) \mathfrak u(t)=W^\top N (I\otimes W\hat x(t)) \mathfrak u(t)+W^\top N \Big(I\otimes \sum_{k=\hat {\mathfrak n}+1}^{n}\langle x(t), q_k\rangle\,q_k\Big) \mathfrak u(t)= W^\top N (\mathfrak{u}(t)\otimes W\hat x(t))= W^\top N(I\otimes W) (\mathfrak{u}(t)\otimes \hat x(t))$, since $N (I\otimes q_k)\in\ker(Q_T)$ for all $k>\hat{\mathfrak n}$. The same way, it holds that $W^\top H (x(t)\otimes x(t))= W^\top H(x(t)\otimes W\hat x(t))$, since $H(I\otimes q_k)\in\ker(Q_T)$ for all $k>\hat{\mathfrak n}$. Given that $H$ is symmetric, we obtain $W^\top H (x(t)\otimes x(t))= W^\top H(W\hat x(t)\otimes x(t))= W^\top H(W\hat x(t)\otimes W \hat x(t))= W^\top H(W\otimes W)(\hat x(t)\otimes \hat x(t))$. Exploiting that $\hat x(t) = W^\top x(t)$, \eqref{zero_error_q} follows from multiplying \eqref{stochstatenew} with $W^\top$ from the left.
\end{proof}
\begin{remark}
In the next subsection, we assume that the Gramians are invertible. In pratice, this is often not given at first. However, we can determine a reduced system like in Proposition \ref{prop_remove_kerP} to achieve a positive definite reachability Gramian. Subsequently, the reduced-order model of Proposition \ref{prop_remove_kerQ} can be computed. As a consequence the observability Gramian will be invertible as well. As the approximations in both propositions lead to exact reduced systems, the positivity assumption on the Gramians is no further restriction. It is also interesting to notice that for the the exact reduced systems in Propositions \ref{prop_remove_kerP} and \ref{prop_remove_kerQ}, no further restrictions on the control but $u, \mathfrak{u}\in L^2_T$ are required. This is in contrast to the dominant subspace arguments in Section \ref{sec_dom_sub}, where \eqref{def_fuu} is assumed.
\end{remark}

\subsection{State-space transformation}
Let $S\in\mathbb R^{n\times n}$ be an invertible matrix and define $\tilde x = S x$. Inserting $x=S^{-1}\tilde x$ into \eqref{original_system} and multiplying with $S$ from the right-hand side yields \begin{subequations}\label{trans_sys}
\begin{align}\label{transstate}
             \dot {\tilde x}(t)&=\tilde A \tilde x(t)+\tilde Bu(t)+ \tilde N(\mathfrak u(t)\otimes \tilde x(t))+\tilde H(\tilde x(t)\otimes \tilde x(t)),\quad \tilde x(0) = 0,\\ \label{output_eq2}
            y(t) &= \tilde C \tilde x(t),\quad t\in [0, T],
\end{align}
\end{subequations}
where $\tilde A= S AS^{-1}$, $\tilde B= S B$, $\tilde N= S N (I\otimes S^{-1})$, $\tilde H= SH (S^{-1}\otimes S^{-1})$ and $\tilde C=CS^{-1}$. Below, we also exploit the block structure of $\tilde N:=\begin{bmatrix}\tilde N_1 & \ldots & \tilde N_m\end{bmatrix}$,  $\tilde H:=\begin{bmatrix}\tilde H_1 & \ldots & \tilde H_n\end{bmatrix}$, where $\tilde N_k, \tilde H_j\in\mathbb R^{n\times n}$, $k\in\{1, \dots, m\}$ and $j\in\{1, \dots, n\}$.
\begin{prop}\label{prop_reach_gram_trans}
 Suppose that the reachability Gramians $P_T:=\sum_{i=1}^\infty\int_0^T Z_{i, T}(s)\dd s$ and    $P:=\sum_{i=1}^\infty\int_0^\infty Z_{i, \infty}(s)\dd s$ of the original system                                                                                                                                                                                                                                                                                                                                                                                                                                                  \eqref{original_system} exist. Then, the reachability Gramians $\tilde P_T$ and $\tilde P$ of \eqref{trans_sys} are given by \begin{align*}
 \tilde P_T=S P_T S^{\top} \quad\text{and}\quad \tilde P=S P S^{\top}.                                                                                                                                                                                                                                                                                                                                                                                                                                                                                                                                                                                                                                                                                                                                                          \end{align*}
\end{prop}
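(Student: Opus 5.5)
The plan is to prove by induction on $i$ that the matrix functions $\tilde Z_{i,T}$ of the transformed system \eqref{trans_sys} are $\tilde Z_{i,T}(t)=S Z_{i,T}(t)S^\top$ for all $t\geq 0$, and likewise $\tilde Z_{i,\infty}(t)=S Z_{i,\infty}(t)S^\top$. Here $\tilde Z_{i,T}$ are defined through \eqref{eq_for_Z1} and \eqref{eq_for_Z2} with $(A,B,N,H)$ replaced by $(\tilde A,\tilde B,\tilde N,\tilde H)$. Summing the integrals then gives $\tilde P_T=SP_TS^\top$ and $\tilde P=SPS^\top$. Convergence of the series for $\tilde P_T$, $\tilde P$ follows from that for $P_T$, $P$, since $X\mapsto SXS^\top$ is a bounded linear map.

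The central ingredient is the intertwining identity $\tilde{\mathcal L}(SXS^\top)=S\mathcal L(X)S^\top$, where $\tilde{\mathcal L}$ is the operator \eqref{lyap_op} built from $\tilde A,\tilde N$. The linear part is immediate: $\tilde A SXS^\top+SXS^\top\tilde A^\top=S(AX+XA^\top)S^\top$. For the bilinear term, I would use the mixed product rule for Kronecker products:
\begin{align*}
\tilde N(I\otimes SXS^\top)\tilde N^\top
&= SN(I\otimes S^{-1})(I\otimes S)(I\otimes X)(I\otimes S^\top)(I\otimes S^{-\top})N^\top S^\top\\
&= SN(I\otimes X)N^\top S^\top .
\end{align*}
It follows that if $Z$ solves $\dot Z=\mathcal L(Z)$, then $SZS^\top$ solves $\dot{\tilde Z}=\tilde{\mathcal L}(\tilde Z)$ with initial value $SZ(0)S^\top$. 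By uniqueness of solutions to linear ODEs, this identifies $\tilde Z$.

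The induction then runs as follows.

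\emph{Base case.} Since $\tilde Z_{1,T}(0)=\tilde B\tilde B^\top=SBB^\top S^\top$, the intertwining identity gives $\tilde Z_{1,T}=SZ_{1,T}S^\top$.

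\emph{Inductive step.} Assume $\tilde Z_{k,T}=SZ_{k,T}S^\top$ for all $k<i$. Then
\begin{align*}
\tilde H\Big(\sum_{k=1}^{i-1}\int_0^T \tilde Z_{k,T}\otimes\int_0^T\tilde Z_{i-k,T}\Big)\tilde H^\top
&= SH(S^{-1}\otimes S^{-1})(S\otimes S)\Big(\sum_{k=1}^{i-1}\int_0^T Z_{k,T}\otimes\int_0^T Z_{i-k,T}\Big)(S^\top\otimes S^\top)(S^{-\top}\otimes S^{-\top})H^\top S^\top\\
&= S\,Z_{i,T}(0)\,S^\top .
\end{align*}
Here I use $(SMS^\top)\otimes(SM'S^\top)=(S\otimes S)(M\otimes M')(S\otimes S)^\top$. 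The intertwining identity then gives $\tilde Z_{i,T}=SZ_{i,T}S^\top$. The same argument with $T=\infty$ handles the infinite-horizon case; existence of the integrals on $[0,\infty)$ transfers directly through the congruence.

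I expect the main (mild) obstacle to be the Kronecker bookkeeping in the $N$- and $H$-terms, in particular keeping the transposes and the identity factors of the correct sizes ($I_m$ versus $I_n$) straight. Beyond that, the argument is a routine uniqueness argument for linear matrix ODEs.
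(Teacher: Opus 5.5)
Your proposal is correct and follows essentially the same route as the paper: induction on $i$, showing via the Kronecker mixed-product identities for $\tilde N$ and $\tilde H$ that $SZ_{i,T}S^\top$ solves the transformed matrix ODE with initial value $\tilde B\tilde B^\top$ (for $i=1$) or the transformed $\tilde H(\cdots)\tilde H^\top$ initial value (for $i\geq 2$), and then summing the integrals. Your explicit intertwining identity $\tilde{\mathcal L}(SXS^\top)=S\mathcal L(X)S^\top$ and the remark that convergence transfers through the congruence make explicit what the paper leaves implicit, including the $T=\infty$ case that the paper omits.
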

\begin{proof}
 We only prove the representation for $\tilde P_T$ and omit the proof for $\tilde P$, because all steps are identical.  By definition, we have that $\tilde P_T:=\sum_{i=1}^\infty\int_0^T \tilde Z_{i, T}(s)\dd s$, where $\tilde Z_{i, T}$ are the solutions of \eqref{eq_for_Z1} and \eqref{eq_for_Z2} when replacing $(A, B, N, H)$ by  $(\tilde A, \tilde B, \tilde N, \tilde H)$. Multiplying \eqref{eq_for_Z1} with $S$ from the left and with $S^\top$ from the righ-hand side and using the definition of $\mathcal L$ in \eqref{lyap_op}, we obtain \begin{align*}
 \frac{\dd}{\dd t} SZ_{1, T}(t)S^\top &= \tilde A (SZ_{1, T}(t)S^\top)+(SZ_{1, T}(t)S^\top){\tilde A}^\top + \tilde N\big(I\otimes (SZ_{1, T}(t)S^\top)\big) \tilde N^\top,\\
 SZ_{1, T}(0)S^\top &= \tilde B{\tilde B}^\top.
\end{align*}
Thus, we see that $\tilde Z_{1, T}(t)=SZ_{1, T}(t)S^\top$. Let us now assume that $\tilde Z_{k, T}(t)=SZ_{k, T}(t)S^\top$ for $k\in\{1, \dots, i-1\}$. If we multiply \eqref{eq_for_Z2} with $S$ and $S^\top$ from the left and the right, respectively, we obtain \begin{align*}
 \frac{\dd}{\dd t} SZ_{i, T}(t)S^\top &= \tilde A (SZ_{i, T}(t)S^\top)+(SZ_{i, T}(t)S^\top){\tilde A}^\top + \tilde N\big(I\otimes (SZ_{i, T}(t)S^\top)\big) \tilde N^\top,\\
 SZ_{i, T}(0)S^\top &= S H \big(\sum_{k=1}^{i-1}\int_0^T Z_{k, T}(s)\dd s\otimes \int_0^T Z_{i-k, T}(s) \dd s\big) H ^\top S^\top\\
 &= S H \big(\sum_{k=1}^{i-1}S^{-1}\int_0^T \tilde Z_{k, T}(s)\dd s\, S^{-\top}\otimes S^{-1}\int_0^T \tilde Z_{i-k, T}(s) \dd s\, S^{-\top}\big) H ^\top S^\top\\
 &= \tilde H \big(\sum_{k=1}^{i-1}\int_0^T \tilde Z_{k, T}(s)\dd s\otimes \int_0^T \tilde Z_{i-k, T}(s) \dd s\big) \tilde H ^\top.
 \end{align*}
This shows that $\tilde Z_{i, T}(t)=SZ_{i, T}(t)S^\top$ for all $i\geq 1$ and hence $\tilde P_T=SP_TS^\top$.
\end{proof}
\begin{prop}\label{prop_trans_Q}
 Suppose that the observability Gramians $Q_T:=\sum_{i=1}^\infty\int_0^T \mathcal Z_{i, T}(s)\dd s$ and $Q:=\sum_{i=1}^\infty\int_0^\infty \mathcal Z_{i, \infty}(s)\dd s$ of the original system                                                                                                                                                                                                                                                                                                                                                                                                                                                  \eqref{original_system} exist. Then, the observability Gramians $\tilde Q_T$ and $\tilde Q$ of \eqref{trans_sys} are given by \begin{align*}
 \tilde Q_T=S^{-\top} Q_T S^{-1} \quad\text{and}\quad \tilde Q=S^{-\top} P S^{-1}.                                                                                                                                                                                                                                                                                                                                                                                                                                                                                                                                                                                                                                                                                                                                                          \end{align*}
\end{prop}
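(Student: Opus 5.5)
We mirror the proof of Proposition \ref{prop_reach_gram_trans} and argue by induction on $i$. We treat only $Q_T$; the infinite-horizon case is word for word the same with $T$ replaced by $\infty$. (The statement's $\tilde Q=S^{-\top}PS^{-1}$ is evidently a typo for $S^{-\top}QS^{-1}$, and we prove the latter.)

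Let $\tilde{\mathcal Z}_{i,T}$ denote the solutions of \eqref{eq_for_Z1_ad} and \eqref{eq_for_Z2_ad} for the data $(\tilde A,\tilde N,\tilde H,\tilde C)$. These equations also involve the transformed reachability quantities $\tilde Z_{k,T}$. From the proof of Proposition \ref{prop_reach_gram_trans} we already have $\tilde Z_{k,T}=SZ_{k,T}S^\top$ for all $k$. The claim we prove is $\tilde{\mathcal Z}_{i,T}(t)=S^{-\top}\mathcal Z_{i,T}(t)S^{-1}$ for all $i$. Summing and integrating over $i$ then gives $\tilde Q_T=S^{-\top}Q_TS^{-1}$.

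\textbf{Step 1: the adjoint Lyapunov operator.} Multiply $\dot{\mathcal Z}_{i,T}=\mathcal L^*(\mathcal Z_{i,T})$ by $S^{-\top}$ on the left and $S^{-1}$ on the right. The terms $\tilde A^\top(S^{-\top}XS^{-1})+(S^{-\top}XS^{-1})\tilde A$ are immediate, since $\tilde A=SAS^{-1}$.

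The bilinear term requires the block-transposed structure. We have $\tilde N_k=SN_kS^{-1}$, so $\tilde N_k^\top=S^{-\top}N_k^\top S^\top$. Using $\mathcal N(I\otimes X)\mathcal N^\top=\sum_k N_k^\top XN_k$, we get
\[
S^{-\top}\Big(\sum_k N_k^\top X N_k\Big)S^{-1}=\sum_k \tilde N_k^\top\big(S^{-\top}XS^{-1}\big)\tilde N_k .
\]
Hence $S^{-\top}\mathcal L^*(X)S^{-1}=\tilde{\mathcal L}^*(S^{-\top}XS^{-1})$.

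\textbf{Step 2: initial values.} For $i=1$ we have $S^{-\top}C^\top CS^{-1}=\tilde C^\top\tilde C$, which settles the base case by uniqueness of solutions of the linear matrix ODE.

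For $i\ge2$ we must show
\[
S^{-\top}\mathcal H(X\otimes Y)\mathcal H^\top S^{-1}=\tilde{\mathcal H}\big(SXS^\top\otimes S^{-\top}YS^{-1}\big)\tilde{\mathcal H}^\top ,
\]
where $X=\int_0^T Z_{k,T}$ and $Y=\int_0^T\mathcal Z_{i-k,T}$. Combined with the induction hypothesis, this identity yields the correct initial value $\tilde{\mathcal Z}_{i,T}(0)$, and Step 1 then closes the induction.

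\textbf{Main obstacle.} The difficulty is this $\mathcal H$ identity, because the block transpose does not transform simply: $\tilde H_j$ mixes the $H_l$ through $S^{-1}$. We would write
\[
\mathcal H(X\otimes Y)\mathcal H^\top=\sum_{j,l}X_{jl}\,H_j^\top Y H_l .
\]
From $\tilde H=SH(S^{-1}\otimes S^{-1})$ we get $\tilde H_j=\sum_l (S^{-1})_{lj}\,SH_lS^{-1}$.

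Substituting, the right-hand side becomes
\[
\sum_{j,l}(SXS^\top)_{jl}\sum_{a,b}(S^{-1})_{aj}(S^{-1})_{bl}\,S^{-\top}H_a^\top S^\top\big(S^{-\top}YS^{-1}\big)SH_bS^{-1}.
\]
The inner factors $S^\top S^{-\top}$ and $S^{-1}S$ cancel. The scalar coefficients collapse via $S^{-1}(SXS^\top)S^{-\top}=X$, so the sum over $j,l$ reduces to $X_{ab}$. This gives exactly $S^{-\top}\big(\sum_{a,b}X_{ab}H_a^\top YH_b\big)S^{-1}$, as required. We expect this index bookkeeping to be the only delicate point.
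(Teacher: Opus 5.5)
Your proof is correct and follows the paper's argument. Both proceed by induction on $i$, using $\tilde Z_{k,T}=SZ_{k,T}S^\top$ from Proposition \ref{prop_reach_gram_trans} and transforming the adjoint operator $\mathcal L^*$ and the initial values. Your index computation for the $\mathcal H$-term is the entrywise form of the paper's compact identity $\tilde{\mathcal H}=S^{-\top}\mathcal H(S^{-1}\otimes S^\top)$, and you are right to read $\tilde Q=S^{-\top}PS^{-1}$ as a typo for $S^{-\top}QS^{-1}$.
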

\begin{proof}
 We only prove the representation for $\tilde Q_T$ and omit the proof for $\tilde Q$, due to the similarity.  By definition, we have that $\tilde Q_T:=\sum_{i=1}^\infty\int_0^T \tilde {\mathcal Z}_{i, T}(s)\dd s$, where $\tilde {\mathcal Z}_{i, T}$ are the solutions of \eqref{eq_for_Z1_ad} and \eqref{eq_for_Z2_ad} when replacing $(A, B, N, H, C)$ by  $(\tilde A, \tilde B, \tilde N, \tilde H, \tilde C)$ as well as $(\mathcal N, \mathcal H)$ by $(\tilde {\mathcal N}, \tilde {\mathcal H})$, where $\tilde {\mathcal N}$ and $\tilde {\mathcal H}$ are the block-wise transposed of $\tilde N$ and $\tilde H$. More precisely, we have \begin{align*}
 \tilde N&=  S N (I\otimes S^{-1}) = \begin{bmatrix}S N_1 S^{-1}  & \ldots & S N_m S^{-1} \end{bmatrix},\\
 \tilde H&= SH (I\otimes S^{-1}) (S^{-1}\otimes I)  = \begin{bmatrix}S H_1 S^{-1}  & \ldots & S H_n S^{-1} \end{bmatrix} (S^{-1}\otimes I)                                                                                                                                                                                                                                                                                                                                                                                                                                                                                                                                                                  \end{align*}
 and hence \begin{align*}
 \tilde {\mathcal N}&= \begin{bmatrix}S^{-\top} N_1^\top S^{\top}  & \ldots & S^{-\top}  N_m^\top S^{\top} \end{bmatrix}= S^{-\top} \mathcal N (I\otimes S^{\top}),\\
 \tilde {\mathcal H}&=  \begin{bmatrix}S^{-\top} H_1^\top S^{\top}  & \ldots & S^{-\top} H_n^\top S^{\top} \end{bmatrix} (S^{-1}\otimes I)=S^{-\top}\mathcal H (S^{-1}\otimes S^{\top}).                                                                                                                                                                                                                                                                                                                                                                                                                                                                                                                                                                  \end{align*}
 Multiplying \eqref{eq_for_Z1_ad} with $S^{-\top}$ from the left and with $S^{-1}$ from the righ-hand side and using the definition of $\mathcal L^*$ in \eqref{lyap_op_adjoint}, we obtain \begin{align*}
\frac{\dd}{\dd t} S^{-\top}{\mathcal Z}_{1, T}(t)S^{-1} &= \tilde A^\top (S^{-\top} \mathcal Z_{1, T}(t)S^{-1})+(S^{-\top}\mathcal Z_{1, T}(t)S^{-1}){\tilde A} + \tilde{\mathcal N}\big(I\otimes (S^{-\top}\mathcal Z_{1, T}(t)S^{-1})\big) \tilde {\mathcal N}^\top,\\
 S^{-\top}\mathcal Z_{1, T}(0)S^{-1} &= \tilde C^\top{\tilde C}.
\end{align*}
Consequently, we have $\tilde {\mathcal Z}_{1, T}(t)=S^{-\top}\mathcal Z_{1, T}(t)S^{-1}$. We assume that $\tilde {\mathcal Z}_{k, T}(t)=S^{-\top}\mathcal Z_{k, T}(t)S^{-1}$ for $k\in\{1, \dots, i-1\}$. By the proof of  Proposition \ref{prop_reach_gram_trans}, we also know that $\tilde {Z}_{k, T}(t)=S Z_{k, T}(t)S^{\top}$ for $k\in\{1, \dots, i-1\}$.
We multiply \eqref{eq_for_Z2_ad} with $S^{-\top}$ and $S^{-1}$ from the left and the right, respectively. Therefore, we obtain \begin{align*}
 \frac{\dd}{\dd t} S^{-\top}{\mathcal Z}_{i, T}(t)S^{-1} &= \tilde A^\top (S^{-\top} \mathcal Z_{i, T}(t)S^{-1})+(S^{-\top}\mathcal Z_{i, T}(t)S^{-1}){\tilde A} + \tilde{\mathcal N}\big(I\otimes (S^{-\top}\mathcal Z_{i, T}(t)S^{-1})\big) \tilde {\mathcal N}^\top,\\
 S^{-\top} \mathcal Z_{i, T}(0)S^{-1} &= S^{-\top} \mathcal H \big(\sum_{k=1}^{i-1}\int_0^T Z_{k, T}(s)\dd s\otimes \int_0^T \mathcal Z_{i-k, T}(s) \dd s\big) \mathcal H^\top S^{-1}\\
 &= S^{-\top} \mathcal H \big(\sum_{k=1}^{i-1}S^{-1}\int_0^T \tilde Z_{k, T}(s)\dd s\, S^{-\top}\otimes S^{\top}\int_0^T \tilde {\mathcal Z}_{i-k, T}(s) \dd s\, S\big) \mathcal H^\top S^{-1}\\
 &= \tilde {\mathcal H} \big(\sum_{k=1}^{i-1}\int_0^T \tilde Z_{k, T}(s)\dd s\otimes \int_0^T \tilde {\mathcal Z}_{i-k, T}(s) \dd s\big) \tilde {\mathcal H}^\top.
 \end{align*}
This shows that $\tilde {\mathcal Z}_{i, T}(t)=S^{-\top} \mathcal Z_{i, T}(t)S^{-1}$ for all $i\geq 1$ and hence $\tilde Q_T=S^{-\top} Q_T S^{-1}$.
 \end{proof}
We are now discussing the balancing transformation that simultaneously diagonalizes reachability and observability Gramians. This transformation requires the Gramians to be invertible. This can always be assumed without loss of generality due to Section \ref{sec_exact_rom}.
\begin{prop}\label{prop_bal}
 Let $P_T$ and $Q_T$ be the Gramians of \eqref{original_system} according to Definitions \ref{def_P} and \ref{def_Q}, respectively. Given that $P_T$ and $Q_T$ are invertible, we obtain $\tilde P_T=\tilde Q_T = \Sigma= \diag(\sigma_1,\ldots,\sigma_n)$ using  \begin{align}\label{bal_transform_new}
       S=\Sigma^{\frac{1}{2}} V^\top L_P^{-1},                                                                                                                                                                                                                                                                                                                                                                                                                                                                                                                                                                                                                                                                                                                                                                                          \end{align}
where $P_T=L_PL_P^\top$ and $L_P^\top Q_T L_P=V\Sigma^2 V^\top$ is an eigenvalue decomposition with an orthogonal $V$. The diagonal entries $\sigma_1,\dots, \sigma_n$ of $\Sigma$ are the square root of the eigenvalues of $P_T Q_T$. The matrix $S$ in \eqref{bal_transform_new} is called balancing transformation.
\end{prop}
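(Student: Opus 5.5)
The plan is to combine the two transformation rules already established, Proposition~\ref{prop_reach_gram_trans} and Proposition~\ref{prop_trans_Q}. Together they reduce the balancing statement to pure linear algebra. By these propositions the Gramians of the transformed system \eqref{trans_sys} are
\begin{align*}
\tilde P_T = S P_T S^\top, \qquad \tilde Q_T = S^{-\top} Q_T S^{-1}
\end{align*}
for any invertible $S$. It therefore suffices to check that the specific $S$ in \eqref{bal_transform_new} is invertible and that it produces $\Sigma$ in both formulas.

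\textbf{Well-definedness and invertibility of $S$.} Since $P_T$ is symmetric positive definite (invertibility is assumed, and $P_T \geq 0$ as a sum of integrals of positive semidefinite $Z_{i,T}$), a factorization $P_T = L_P L_P^\top$ with invertible $L_P$ exists, for instance the Cholesky factor. The matrix $L_P^\top Q_T L_P$ is symmetric positive definite, because $Q_T$ is positive semidefinite and invertible, hence positive definite, and $L_P$ is invertible. Its spectral decomposition therefore has the form $V \Sigma^2 V^\top$ with orthogonal $V$ and $\Sigma = \diag(\sigma_1,\dots,\sigma_n)$, $\sigma_k > 0$. It follows that $S = \Sigma^{1/2} V^\top L_P^{-1}$ is invertible, with $S^{-1} = L_P V \Sigma^{-1/2}$.

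\textbf{Direct computation.} For the reachability Gramian,
\begin{align*}
S P_T S^\top = \Sigma^{1/2} V^\top L_P^{-1} L_P L_P^\top L_P^{-\top} V \Sigma^{1/2} = \Sigma.
\end{align*}
For the observability Gramian,
\begin{align*}
S^{-\top} Q_T S^{-1} = \Sigma^{-1/2} V^\top L_P^\top Q_T L_P V \Sigma^{-1/2} = \Sigma^{-1/2} \Sigma^2 \Sigma^{-1/2} = \Sigma.
\end{align*}
For the spectral claim, $P_T Q_T = L_P L_P^\top Q_T$ is similar via $L_P$ to $L_P^\top Q_T L_P = V \Sigma^2 V^\top$. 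Hence the eigenvalues of $P_T Q_T$ are exactly the $\sigma_k^2$.

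\textbf{Main point of care.} The only step needing attention is applicability. We must confirm that the transformed system's Gramians exist and are given by the transformation formulas. This is precisely the content of Propositions~\ref{prop_reach_gram_trans} and \ref{prop_trans_Q}, which hold for any invertible $S$. The case of $P$ and $Q$ (infinite horizon) is identical. Note that the statement of Proposition~\ref{prop_trans_Q} contains a typo ($P$ for $Q$); the intended formula $\tilde Q = S^{-\top} Q S^{-1}$ is what is used here.
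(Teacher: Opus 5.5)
Your proposal is correct and follows the paper's own proof. You invoke Propositions~\ref{prop_reach_gram_trans} and~\ref{prop_trans_Q} to get $\tilde P_T = SP_TS^\top$ and $\tilde Q_T = S^{-\top}Q_TS^{-1}$, substitute $S$ directly, and read off the eigenvalues of $P_TQ_T$ from its similarity to $L_P^\top Q_T L_P$; your extra checks (positive definiteness of $L_P^\top Q_T L_P$, hence $\sigma_k>0$ and invertibility of $S$) and your note on the $P$/$Q$ typo in Proposition~\ref{prop_trans_Q} are details the paper leaves implicit.
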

\begin{proof}
 We exploit Propositions \ref{prop_reach_gram_trans} and \ref{prop_trans_Q}. The balancing transformation in \eqref{bal_transform_new} is then inserted into $\tilde P_T= SP_T S^\top= \Sigma^{\frac{1}{2}} V^\top L_P^{-1} P_T L_P^{-\top} V \Sigma^{\frac{1}{2}}=\Sigma$ and use the same $S$ for $\tilde Q_T=S^{-\top}Q_T S^{-1}= \Sigma^{-\frac{1}{2}} V^\top L_P^{\top} Q L_P V \Sigma^{-\frac{1}{2}}=\Sigma$. Further, the matrix $L_P^\top Q_T L_P$ has eigenvalues $\sigma_1^2, \dots, \sigma_n^2$, but their eigenvalues coincide with the ones of $L_PL_P^\top Q_T= P_TQ_T$. This concludes the proof.
\end{proof}
In the following, the diagonal entries of $\Sigma$ are called Hankel singular values.
\begin{remark}
The result of Proposition \ref{prop_bal} also holds when $P_T$ is replaced by $P$ and $Q$ is used instead of $Q_T$.
\end{remark}

\subsection{(Inexact) reduced-order model by balanced truncation}

Let $S$ be the balancing transformation in \eqref{trans_sys}. Let us decompose the balanced realization as follows: \begin{align}\label{matrix_decomp}
\tilde A= \begin{bmatrix}{A}_{11}&A_{12}\\ A_{21}& A_{22}\end{bmatrix},\quad \tilde B = \begin{bmatrix}{B}_1\\ B_2\end{bmatrix}, \quad \tilde N_k= \begin{bmatrix}{N}_{k,11}&N_{k, 12}\\ N_{k, 21}& N_{k, 22}\end{bmatrix},\quad \tilde H_j= \begin{bmatrix}{H}_{j,11}&H_{j, 12}\\ H_{j, 21}& H_{j, 22}\end{bmatrix},\quad \tilde C = \begin{bmatrix}{C}_1& C_2\end{bmatrix},                                                      \end{align}
where ${A}_{11}, N_{k, 11}, H_{j, 11}\in\mathbb R^{r\times r}$, ${B}_1\in\mathbb R^{r\times m}$, $C_1\in\mathbb R^{p\times r}$ for $k\in\{1, \dots, m\}$ and $j\in\{1, \dots, n\}$. The balanced state is further decomposed into \begin{align}\label{state_decomp}
 \tilde x   = \begin{bmatrix}{x}_1\\ x_2\end{bmatrix} \quad \text{with}\quad x_1(t)\in\mathbb R^r, \quad       x_2(t)\in\mathbb R^{n-r},                                                                                                                                                                                          \end{align}
where the $n-r$ variable $x_2$ are associated with small (and hence truncated) Hankel singular values $\sigma_{r+1}, \dots, \sigma_n$.
Inserting \eqref{matrix_decomp} and \eqref{state_decomp} into \eqref{trans_sys} the reduced system is obtained by setting $x_2\equiv 0$ in \eqref{output_eq2} as well as in the differential equation of $x_1$ in \eqref{transstate}. The intuition is that $x_2$ has a low impact on $y$ and it is supposed to be small making it negligible in the dynamics of $x_1$. In addition, the differential equation for $x_2$ is truncated in \eqref{transstate}. This leads to \begin{subequations}\label{red_sys}
\begin{align}\label{redstate}
             \dot {\hat x}(t)&=\hat A \hat x(t)+ \hat B u(t)+ \hat N(\mathfrak u(t)\otimes \hat x(t))+ \hat H(\hat x(t)\otimes \hat x(t)),\quad \hat x(0) = 0,\\ \label{output_red}
            \hat y(t) &= \hat C \hat x(t),\quad t\in [0, T],
\end{align}
\end{subequations}
setting \begin{align}\label{bt_matrices}
\hat A=A_{11}, \quad \hat B= B_1,\quad \hat N= N_{11}, \quad \hat H=H_{11}, \quad \hat C=C_1
        \end{align}
with $N_{11}:=\begin{bmatrix} N_{1, 11} & \ldots & N_{m, 11}\end{bmatrix}$ and  $H_{11}:=\begin{bmatrix}H_{1, 11} & \ldots & H_{r, 11}\end{bmatrix}$.

\subsection{Error analysis}

Error bounds for dimension reduction applied to  quadratic bilinear systems could not be achieved so far. We provide bounds that are interesting in the context of balanced truncation \cite{morBenG24} and $\mathcal{H}_2$-optimal model reduction \cite{nonlinear_irka}.

\subsubsection{General Gramian-based bound}\label{sec_gen_bound}

Let us determine a general output bound between \eqref{original_system} and \eqref{red_sys}. In this context, let us assume that the reachability Gramians of both systems exist. We can write \begin{align*}
y(t)-\hat y(t) = Cx(t)-\hat C\hat x(t)  = \begin{bmatrix} C & -\hat C\end{bmatrix}   \begin{bmatrix} x(t) \\ {\hat x}(t)\end{bmatrix}  = C_e  x_e(t)                                                                                                                                                                                                                             \end{align*}
defining $x_e(t)=\begin{bmatrix} x(t) \\ {\hat x}(t)\end{bmatrix}$ and $C_e=\begin{bmatrix} C & -\hat C\end{bmatrix}$. One can easily check that the state $x_e$ satisfies \begin{align*}
  \dot x_e(t)&=A_e x_e (t)+B_e u(t)+ N_e (\mathfrak u(t)\otimes x_e(t))+H_e(x_e(t)\otimes x_e(t)),\quad x_e(0) = 0,                                                                                                                                                                                                                                                                                                                                                    \end{align*}
where \begin{align*}
A_e&=\begin{bmatrix}{A}&0\\ 0& {\hat A}\end{bmatrix}, \quad B_e=\begin{bmatrix}B\\ {\hat B}\end{bmatrix}, \quad N_e=\begin{bmatrix} N_{e, 1}&\dots & N_{e, m}\end{bmatrix} \text{ with }N_{e, k}=\begin{bmatrix}{N_k}&0\\ 0& {\hat N}_k\end{bmatrix} \text{and}\\
H_e&=\begin{bmatrix}H ([I_n \,0]\otimes [I_n \,0])\\ {\hat H}([0 \,I_r]\otimes [0 \, I_r])\end{bmatrix}=\begin{bmatrix} H_{e, 1}&\dots & H_{e, n+r}\end{bmatrix}  \text{ with } H_{e, i}= \begin{bmatrix}{H_i}&0\\ 0& 0\end{bmatrix}, \quad H_{e, n+j}= \begin{bmatrix}{0}&0\\ 0& {\hat H_j}\end{bmatrix}
      \end{align*}
for $k\in\{1, \dots, m\}$, $i\in\{1, \dots, n\}$ and $j\in\{1, \dots, r\}$. Note that the system solved by $x_e$ was derived in a similar manner in \cite{nonlinear_irka}. Applying Theorem \ref{thm_gramian_nased bound} to the above error system and assuming that
\eqref{def_fuu} holds (i.e. $f(u, \mathfrak{u})<\infty$),
we obtain
 \begin{align*}
\sup_{t\in[0, T]}\left\|y(t)-\hat y(t)\right\|
\leq  \sqrt{\trace(C_e P_{e, T} C_e^\top)}\sqrt{f(u, \mathfrak{u})}\leq  \sqrt{\trace(C_e P_{e} C_e^\top)}\sqrt{f(u, \mathfrak{u})},
\end{align*}
where $P_{e, T}:=\sum_{i=1}^\infty\int_0^T Z_{e, i, T}(s)\dd s$ and   $P_e:=\sum_{i=1}^\infty\int_0^\infty Z_{e, i, \infty}(s)\dd s$ with \begin{align}\label{eq_for_Z1_error}
 \dot {Z}_{e, 1, T}(t) = \mathcal L_e\big(Z_{e, 1, T}(t)\big),\quad Z_{e, 1, T}(0) = B_eB_e^\top
\end{align}
with $\mathcal L_e$ defined like $\mathcal L$ in \eqref{lyap_op} replacing $(A, N)$ by $(A_e, N_{e})$.
In addition, $Z_{e, i, T}(t)$, $t\geq 0$,
($i\geq 2$) are the solutions of \begin{align}\label{eq_for_Z2_error}
 \dot {Z}_{e, i, T}(t) = \mathcal L_e\big(Z_{e, i, T}(t)\big),\quad Z_{e, i, T}(0) = H_e\big(\sum_{k=1}^{i-1}\int_0^T Z_{e, k, T}(s)\dd s\otimes \int_0^T Z_{e, i-k, T}(s) \dd s\big)H_e^\top.
\end{align}
Replacing $T$ by $\infty$ in \eqref{eq_for_Z1_error} and \eqref{eq_for_Z2_error}, we obtain the equations solved by $Z_{e, i, \infty}$. Evaluating the left upper and the right lower blocks of \eqref{eq_for_Z1_error} and \eqref{eq_for_Z2_error} (or the equations, where $T$ replaced by $\infty$), we can identity the respective blocks of $P_{e, T}$ and $P_e$. Therefore, we have \begin{align}\label{Gramian_blocks}
P_{e, T}=\begin{bmatrix}P_T& P_{2, T}\\ P_{2, T}^\top & {\hat P}_T\end{bmatrix} \quad  \text{and}\quad
P_{e}=\begin{bmatrix}P& P_{2}\\ P_{2}^\top & {\hat P}\end{bmatrix},                                                                                                                                                   \end{align}
where $\hat P$ and $\hat P_T$ are the reachability Gramians of the reduced systems. Moreover, evaluating the right upper block of \eqref{eq_for_Z1_error} and \eqref{eq_for_Z2_error} (or the equations, where $T$ replaced by $\infty$), we have $P_{2, T}:=\sum_{i=1}^\infty\int_0^T Z_{2, i, T}(s)\dd s$ and   $P_2:=\sum_{i=1}^\infty\int_0^\infty Z_{2, i, \infty}(s)\dd s$, where \begin{align*}
 \dot {Z}_{2, 1, T}(t) &= \mathcal L_2\big(Z_{2, 1, T}(t)\big),\quad Z_{2, 1, T}(0) = B\hat B^\top,\\
 \dot {Z}_{2, i, T}(t) &= \mathcal L_2\big(Z_{2, i, T}(t)\big),\quad Z_{2, i, T}(0) = H\big(\sum_{k=1}^{i-1}\int_0^T Z_{2, k, T}(s)\dd s\otimes \int_0^T Z_{2, i-k, T}(s) \dd s\big)\hat H^\top
\end{align*}
with $i\geq 2$ and $\mathcal L_2(X_2)=A X_2 + X_2 \hat A^\top + N (I\otimes X_2) \hat N^\top$. Again, replacing $T$ by $\infty$ leads to the equations for $ Z_{2, i, \infty}$ ($i\geq 1$). Expressing the bound with the Gramian blocks in \eqref{Gramian_blocks}, we obtain \begin{align}\nonumber
\sup_{t\in[0, T]}\left\|y(t)-\hat y(t)\right\|
&\leq  \sqrt{\trace(C P_{T} C^\top)+\trace(\hat C \hat P_{T} \hat C^\top)-2\trace(C P_{2, T} \hat C^\top)}\sqrt{f(u, \mathfrak{u})}\\ \label{general_bound}
&\leq \sqrt{\trace(C P C^\top)+\trace(\hat C \hat P \hat C^\top)-2\trace(C P_{2} \hat C^\top)}\sqrt{f(u, \mathfrak{u})}.
\end{align}
\begin{remark}\label{reamrk_eb_bt}
Let us consider the specific reduced system \eqref{red_sys} with coefficients \eqref{bt_matrices} (balanced truncation). Given $H=0$ (bilinear case), we find a matrix $M$ that is independent of $\Sigma=\diag(\sigma_1, \dots, \sigma_n)$, so that $\trace(C P C^\top)+\trace(\hat C \hat P \hat C^\top)-2\trace(C P_{2} \hat C^\top)=\trace(\Sigma_2 M)$, see \cite{redmannbenner, h2_bil}. Applying similar proof techniques to the general quadratic bilinear case, additional terms occur that do not depend on $\Sigma_2$ explicitly. Therefore, it is an open question if a small matrix $\Sigma_2=\diag(\sigma_{r+1}, \dots, \sigma_n)$ of truncated Hankel singular values ensures that the bound in \eqref{general_bound} is small. We provide further details in the next section.
%We omit all error bound calculation for a  reduced system \eqref{red_sys} with coefficients \eqref{bt_matrices}, since they do not give further insight beyond the above statements of this remark.
\end{remark}

\subsubsection{Specific bound for balanced truncation}\label{BT_bound}

We conduct the specific error analysis solely for the case, where the balancing procedure is based on the pair $(P, Q)$ of infinite Gramians. We introduce an operator describing the quadratic term in \eqref{eq_for_P} and \eqref{eq_for_Q} as follows:\begin{align*}
 \Pi_{H, M}(X):= H\big(M\otimes X\big)H^\top = \sum_{i, j=1}^n H_i X H_j^\top m_{ij}                                                                                                                                                                                                                                                                                                                                                                                                                                                                                              \end{align*}
exploiting the block-wise representation of $H$, where $M=(m_{ij})$ is a suitable matrix. The adjoint  of this operator (w.r.t. the Frobenius inner product) is \begin{align*}
  \Pi_{H, M}^*(Y)= \Pi_{\mathcal H, M^\top}(Y) =  \mathcal H\big(M^\top \otimes Y\big)\mathcal H^\top = \sum_{i, j=1}^n H_i^\top Y H_j m_{ji}.                                                                                                                        \end{align*}
As an immediate consequence, we obtain \begin{align}\nonumber
 \trace(C P C^\top) &= \langle C^\top C, P \rangle_F  = - \langle \mathcal L^*(Q)+ \Pi_{\mathcal H, P}(Q), P \rangle_F = - \langle Q, \mathcal L(P) + \Pi_{H, P}(P) \rangle_F\\ \label{exchange_trace}
 &= \langle Q, BB^\top  \rangle_F = \trace(B^\top Q B)
                                       \end{align}
using \eqref{eq_for_P}, \eqref{eq_for_Q} and $\Pi_{H, P}^*= \Pi_{\mathcal H, P}$ ($P$ is symmetric). Here, we always assume that the Gramians $P$ and $Q$ exist. Without loss of generality, let us now assume that \eqref{original_system} is already balanced, meaning that $P=Q=\Sigma$ leading to \begin{align}
   - B B^\top &=A\Sigma+\Sigma A^\top+\sum_{i=1}^m N_i\Sigma N_i^\top+ \sum_{j=1}^n H_j\Sigma H_j^\top \sigma_j, \\ \label{eq_balanced_Q}
   - C^\top C &=A^\top\Sigma+\Sigma A+\sum_{i=1}^m N_i^\top\Sigma N_i+ \sum_{j=1}^n H_j^\top \Sigma H_j \sigma_j\end{align}
exploiting \eqref{eq_for_P}, \eqref{eq_for_Q} and expressing $N (I\otimes \Sigma) N^\top$, $\mathcal N (I\otimes \Sigma) \mathcal N^\top$, $H (\Sigma\otimes \Sigma) H^\top$, $\mathcal H (\Sigma\otimes \Sigma) \mathcal H^\top$ using the blocks of $N$ and $H$. The evaluation of the first $r$ rows of \eqref{eq_balanced_Q} leads to \begin{align}\nonumber
&- C_1^\top C =\begin{bmatrix}A_{11}^\top & A_{21}^\top\end{bmatrix} \Sigma+ \begin{bmatrix}\Sigma_{1} & 0\end{bmatrix} A+\sum_{i=1}^m \begin{bmatrix}N_{i, 11}^\top & N_{i, 21}^\top\end{bmatrix} \Sigma N_i+ \sum_{j=1}^n \begin{bmatrix}H_{j, 11}^\top & H_{j, 21}^\top\end{bmatrix}\Sigma H_j \sigma_j\\ \nonumber
&=\begin{bmatrix}A_{11}^\top \Sigma_1 & A_{21}^\top\Sigma_2 \end{bmatrix} + \Sigma_{1} \begin{bmatrix}A_{11} & A_{12}\end{bmatrix}+\sum_{i=1}^m N_{i, 11}^\top \Sigma_1 \begin{bmatrix}N_{i, 11} & N_{i, 12}\end{bmatrix}+N_{i, 21}^\top \Sigma_2 \begin{bmatrix}N_{i, 21} & N_{i, 22}\end{bmatrix} \\ \label{first_r_rows_Q}
&\quad%+\sum_{j=1}^n \begin{bmatrix}H_{j, 11}^\top & H_{j, 21}^\top\end{bmatrix}\Sigma H_j \sigma_j
+\sum_{j=1}^n H_{j, 11}^\top \Sigma_1 \begin{bmatrix}H_{j, 11} & H_{j, 12}\end{bmatrix}\sigma_j+H_{j, 21}^\top \Sigma_2 \begin{bmatrix}H_{j, 21} & H_{j, 22}\end{bmatrix}\sigma_j.
\end{align}
If we further select the first $r$ columns of \eqref{first_r_rows_Q}, we have
\begin{equation}\label{left_upper_block_Q}
 \begin{aligned}
- C_1^\top C_1
&=A_{11}^\top \Sigma_1 + \Sigma_{1} A_{11} +\sum_{i=1}^m N_{i, 11}^\top \Sigma_1 N_{i, 11} +N_{i, 21}^\top \Sigma_2 N_{i, 21} \\
& \quad+\sum_{j=1}^n H_{j, 11}^\top \Sigma_1 H_{j, 11}\sigma_j+H_{j, 21}^\top \Sigma_2H_{j, 21} \sigma_j.
\end{aligned}
\end{equation}
Assuming that the reduced Gramians $\hat P=(\hat p_{ij})$ and $\hat Q$ exist, and choosing the reduced system \eqref{red_sys} with coefficients defined in \eqref{bt_matrices}, we obtain \begin{align}\label{eq_red_P}
   - B_1 B_1^\top &=A_{11}\hat P+\hat P A_{11}^\top+\sum_{i=1}^m N_{i, 11}\hat P N_{i, 11}^\top+ \sum_{i, j=1}^r H_{i, 11}\hat P H_{j, 11}^\top {\hat p}_{ij}, \\ \label{eq_red_Q}
   - C_1^\top C_1 &=A_{11}^\top\hat Q+\hat Q A_{11}+\sum_{i=1}^m N_{i, 11}^\top\hat Q N_{i, 11}+ \sum_{i, j=1}^r H_{i, 11}^\top \hat Q H_{j, 11} {\hat p}_{ij}\end{align}
applying \eqref{eq_for_P} and \eqref{eq_for_Q} to the reduced model.
\begin{thm}\label{thm_error_bound_bt}
Let $y$ be the output of \eqref{original_system} that we assume to be balanced, i.e., $P$ as well as $Q$ exist and $Q=P=\Sigma=\diag(\sigma_1, \dots, \sigma_n)$. Suppose that $\hat y$ is the output of the reduced system \eqref{red_sys} with matrices defined in \eqref{bt_matrices} and existing reduced Gramians $\hat P$ and $\hat Q$. We assume that the controls satisfy $f(u, \mathfrak u)<\infty$, where $f$ is introduced in \eqref{def_fuu}. Then, we have
 \begin{align}\label{bound_in_a_theorem}
\sup_{t\in[0, T]}\left\|y(t)-\hat y(t)\right\|
\leq \sqrt{\trace(C \Sigma C^\top)+\trace(C_1 \hat P  C_1^\top)-2\trace(C P_{2} C_1^\top)}\sqrt{f(u, \mathfrak{u})}
\end{align}
with $P_2=\begin{bmatrix}P_{2, 1} \\ P_{2, 2}\end{bmatrix}$  defined in \eqref{Gramian_blocks}. Moreover, we can represent \begin{align}\nonumber                                                                                                                                                &\trace(C \Sigma C^\top)+\trace(C_1 \hat P  C_1^\top)-2\trace(C P_{2} C_1^\top)\\ \label{error_term1}
&= \trace(\Sigma_2 M)                                                                     +\trace\bigg(\Sigma_2 \bigg[\sum_{j=1}^n 2 \begin{bmatrix}H_{j, 21} & H_{j, 22}\end{bmatrix}P_2 H_{j, 21}^\top \sigma_j-\sum_{j=1}^n H_{j, 21}\hat P H_{j, 21}^\top \sigma_j\bigg]\bigg)\\ \label{error_term2}
 &\quad+2\trace\bigg(\Sigma_1 H_1\Big((\Sigma-\begin{bmatrix}P_2 & 0 \end{bmatrix})\otimes\begin{bmatrix}P_2 & 0 \end{bmatrix}\Big)H_1^\top\bigg) - \trace\bigg(\Sigma_1 H_{11}\Big((\Sigma_1-\hat P)\otimes \hat P\Big) H_{11}^\top\bigg),
\end{align}
where $M=B_2 B_2^\top + 2 P_{2, 2} A_{21}^\top + 2\sum_{i=1}^m \begin{bmatrix}N_{i, 21} & N_{i, 22}\end{bmatrix}P_2 N_{i, 21}^\top - \sum_{i=1}^m  N_{i, 21}\hat P N_{i, 21}^\top$ and $H_1$ is the first $r$ rows of $H$.
\end{thm}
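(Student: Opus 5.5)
The first claim \eqref{bound_in_a_theorem} is simply \eqref{general_bound} specialized to balanced truncation: $P=\Sigma$, $\hat C=C_1$, and $\hat P$ is the reduced reachability Gramian. It follows from the general Gramian-based bound of Section \ref{sec_gen_bound}, since $f(u,\mathfrak u)<\infty$ and all Gramians involved exist. The real work is the representation \eqref{error_term1}--\eqref{error_term2}.

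For this I will express each of the three traces through Lyapunov-type identities, mirroring the bilinear argument of \cite{redmannbenner, h2_bil}. The equation for $P_2$, obtained by integrating the $Z_{2,i,\infty}$ equations and summing as in the proof of \eqref{eq_for_P}, reads
\begin{align*}
-B B_1^\top = A P_2 + P_2 A_{11}^\top + \sum_{i=1}^m N_i P_2 N_{i,11}^\top + H(P_2\otimes P_2)H_{11}^\top .
\end{align*}
Here I use that the $(1,2)$ block of $H_e(P_e\otimes P_e)H_e^\top$ equals $H(P_2\otimes P_2)\hat H^\top$.

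I will then handle the three traces as follows.
\begin{enumerate}[(i)]
\item For $\trace(C\Sigma C^\top)$ I use \eqref{exchange_trace}, i.e.\ $\trace(B^\top\Sigma B)=\trace(\Sigma_1B_1B_1^\top)+\trace(\Sigma_2B_2B_2^\top)$.
\item For $\trace(C_1\hat PC_1^\top)=\langle C_1^\top C_1,\hat P\rangle_F$, I substitute \eqref{left_upper_block_Q} rather than \eqref{eq_red_Q}. This gives $\Sigma_1$-terms plus $\Sigma_2$-terms with $N_{i,21},H_{j,21}$. Cycling the $\Sigma_1$-terms and inserting \eqref{eq_red_P} yields $\trace(\Sigma_1B_1B_1^\top)$ minus $\Sigma_2$-terms, minus a mismatch $\trace(\Sigma_1 H_{11}((\Sigma_1-\hat P)\otimes\hat P)H_{11}^\top)$. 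This mismatch arises because \eqref{left_upper_block_Q} carries weights $\sigma_j$ (i.e.\ $\Sigma_1\otimes$) while \eqref{eq_red_P} carries $\hat p_{ij}$.
\item For $\trace(CP_2C_1^\top)=\langle C^\top C_1,P_2\rangle_F$, I use the first $r$ columns of \eqref{eq_balanced_Q}, i.e.\ the transpose of \eqref{first_r_rows_Q}. Pairing with $P_2$ and transposing traces produces $\Sigma_1$-terms, which I rewrite by the $P_2$-equation premultiplied by $[\Sigma_1\ 0]$, giving $\trace(\Sigma_1B_1B_1^\top)$. The $\Sigma_2$-terms then involve $P_{2,2}A_{21}^\top$, $[N_{i,21}\ N_{i,22}]P_2N_{i,21}^\top$ and $[H_{j,21}\ H_{j,22}]P_2H_{j,21}^\top\sigma_j$, plus a quadratic mismatch $\trace(\Sigma_1H_1((\Sigma-[P_2\ 0])\otimes[P_2\ 0])H_1^\top)$ between $H(\Sigma\otimes\cdot)$ weights and $H(P_2\otimes P_2)$.
\end{enumerate}

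Combining (i)$+$(ii)$-2$(iii), the $\trace(\Sigma_1B_1B_1^\top)$ contributions cancel ($1+1-2$). The $A$- and $N$-terms collect into $\trace(\Sigma_2M)$, the $\Sigma_2$-weighted $H_{j,21}$ terms give the bracket in \eqref{error_term1}, and the two mismatches give \eqref{error_term2}.

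The main obstacle is the careful bookkeeping of the quadratic terms. I need to write $\sum_j H_j^\top\Sigma H_j\sigma_j$ as $\mathcal H(\Sigma\otimes\Sigma)\mathcal H^\top$, use the adjoint identity $\Pi_{H,M}^*=\Pi_{\mathcal H,M^\top}$ to move each term onto the $B$-side, and express $H_{11}(\hat P\otimes\hat P)$ versus $H_{11}(\Sigma_1\otimes\hat P)$ with $[P_2\ 0]$ padding, so that exactly the stated differences remain. Here I would first verify the identity $\trace(\Sigma_1 H_{11}(X\otimes Y)H_{11}^\top)=\sum_{ij}x_{ij}\trace(\Sigma_1H_{i,11}YH_{j,11}^\top)$, and track the transposes of $P_2$ (which is not symmetric) in the mixed terms.
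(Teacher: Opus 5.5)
Your steps (i) and (iii) are the paper's own: the exchange identity \eqref{exchange_trace} for the first trace, and the first $r$ rows of \eqref{eq_balanced_Q} paired with $P_2$, with the $\Sigma_1$-part removed via the first $r$ rows of the $P_2$-equation. Step (ii) takes a different route. The paper writes $\trace(C_1\hat PC_1^\top)=\trace(B_1^\top\hat QB_1)$, derives \eqref{dif_q_sig} for $\hat Q-\Sigma_1$ from \eqref{eq_red_Q} and \eqref{left_upper_block_Q}, and pairs it with \eqref{eq_red_P}. All $\hat Q$-terms cancel there, so your direct pairing of \eqref{left_upper_block_Q} with $\hat P$ gives the same result with less work. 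It also shows that the existence of $\hat Q$ is not needed.

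\textbf{The gap is in the bookkeeping of (ii), which does not give what you claim.} The quadratic part of \eqref{left_upper_block_Q} is $\sum_{j=1}^n\sigma_j H_{j,11}^\top\Sigma_1H_{j,11}$. It carries the weights of all of $\Sigma$, not only $\Sigma_1$, whereas $H_{11}=[H_{1,11},\dots,H_{r,11}]$ contains only $j\le r$. Pairing with $\hat P$ and inserting \eqref{eq_red_P} gives exactly
\begin{align*}
\trace(C_1\hat PC_1^\top)&=\trace(\Sigma_1B_1B_1^\top)-\trace\Big(\Sigma_1H_{11}\big((\Sigma_1-\hat P)\otimes\hat P\big)H_{11}^\top\Big)-\sum_{j=r+1}^n\sigma_j\trace\big(\Sigma_1H_{j,11}\hat PH_{j,11}^\top\big)\\
&\quad-\trace\Big(\Sigma_2\sum_{i=1}^m N_{i,21}\hat PN_{i,21}^\top\Big)-\trace\Big(\Sigma_2\sum_{j=1}^n H_{j,21}\hat PH_{j,21}^\top\sigma_j\Big).
\end{align*}
Nothing in (i) or (iii) cancels the sum over $j>r$: those steps only produce $B_2$-, $H_{j,21}$- and $[H_{j,11}\ H_{j,12}]P_2$-terms. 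Already for $n=2$, $r=1$, $N=0$ and scalar $\hat P=\hat p$, this sum equals $\sigma_1\sigma_2H_{2,11}^2\hat p$. Here $H_{2,11}$ is a coefficient of $x_1x_2$ in the first balanced state equation, so it is generically nonzero.

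Consequently \eqref{error_term1}--\eqref{error_term2} is an identity only if $H_{j,11}=0$ for all $j>r$. In general, the last term of \eqref{error_term2} must be replaced by $-\trace\big(\Sigma_1H_1((\Sigma-\hat P_0)\otimes\hat P_0)H_1^\top\big)$ with $\hat P_0=\diag(\hat P,0)\in\mathbb R^{n\times n}$. This is the exact analogue of the $[P_2\ 0]$-term.

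The paper's proof makes the same slip. When it evaluates $\trace(B_1^\top(\hat Q-\Sigma_1)B_1)$, it writes $\sum_{j=1}^r H_{j,11}\hat PH_{j,11}^\top\sigma_j$, although \eqref{dif_q_sig} contains the full sum over $j=1,\dots,n$.

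The omitted term is nonnegative and enters with a minus sign. So the stated expression is still an upper bound for the radicand, and \eqref{bound_in_a_theorem} combined with the stated representation remains valid as an inequality. To prove the representation as an equality, however, you must carry this extra $\Sigma_2$-weighted term.
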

\begin{proof}
The bound in \eqref{bound_in_a_theorem} is a result of Section \ref{sec_gen_bound}, see \eqref{general_bound}.
Applying \eqref{exchange_trace} to the reduced system leads to $\trace(C_1 \hat P C_1^\top) = \trace(B_1^\top \hat Q B_1)$. Using this and $\trace(C \Sigma C^\top)=\trace(B^\top \Sigma B)$ for \eqref{bound_in_a_theorem}, we have {\allowdisplaybreaks\begin{align}
\sup_{t\in[0, T]}\left\|y(t)-\hat y(t)\right\|
\leq \sqrt{\trace(B^\top \Sigma B)+\trace(B_1^\top \hat Q  B_1)-2\trace(C P_{2} C_1^\top)}\sqrt{f(u, \mathfrak{u})}.
\end{align}
Inserting \eqref{first_r_rows_Q} into $\trace(C P_{2} C_1^\top)=\trace(P_{2} C_1^\top C)$ yields \begin{align}\nonumber                                                                                                &-\trace(C P_{2} C_1^\top)
\\ \nonumber&= \trace\bigg(P_{2}\bigg[\begin{bmatrix}A_{11}^\top \Sigma_1 & A_{21}^\top\Sigma_2 \end{bmatrix} + \Sigma_{1} \begin{bmatrix}A_{11} & A_{12}\end{bmatrix}+\sum_{i=1}^m N_{i, 11}^\top \Sigma_1 \begin{bmatrix}N_{i, 11} & N_{i, 12}\end{bmatrix}+N_{i, 21}^\top \Sigma_2 \begin{bmatrix}N_{i, 21} & N_{i, 22}\end{bmatrix}\\ \nonumber
&\qquad +\sum_{j=1}^n H_{i, 11}^\top \Sigma_1 \begin{bmatrix}H_{j, 11} & H_{j, 12}\end{bmatrix}\sigma_j+H_{j, 21}^\top \Sigma_2 \begin{bmatrix}H_{j, 21} & H_{j, 22}\end{bmatrix}\sigma_j\bigg]\bigg)  \\ \label{sig1_expr}&= \trace\bigg(\Sigma_1\bigg[P_{2, 1} A_{11}^\top + \begin{bmatrix}A_{11} & A_{12}\end{bmatrix} P_2+\sum_{i=1}^m \begin{bmatrix}N_{i, 11} & N_{i, 12}\end{bmatrix}P_2 N_{i, 11}^\top\bigg]\bigg) \\\nonumber
&\quad + \trace\bigg(\Sigma_1\sum_{j=1}^n \begin{bmatrix}H_{j, 11} & H_{j, 12}\end{bmatrix}\begin{bmatrix} P_2 & 0\end{bmatrix}\begin{bmatrix} H_{j, 11}^\top\\ H_{j, 12}^\top\end{bmatrix}\sigma_j\bigg)
\\ \nonumber
&\quad + \trace\bigg(\Sigma_2 \bigg[P_{2, 2} A_{21}^\top +\sum_{i=1}^m \begin{bmatrix}N_{i, 21} & N_{i, 22}\end{bmatrix}P_2 N_{i, 21}^\top \bigg]\bigg)  +\trace\bigg(\Sigma_2 \sum_{j=1}^n \begin{bmatrix}H_{j, 21} & H_{j, 22}\end{bmatrix}P_2 H_{j, 21}^\top \sigma_j\bigg). \nonumber
\end{align}
Based on \eqref{eq_for_P}, we know that $P_e$ with partition in \eqref{Gramian_blocks} satisfies $- B_e B_e^\top =\mathcal L_e\big(P_e\big)+ H_e\big(P_e\otimes P_e\big)H_e^\top$.
From the right upper block of this equation, we further see that $P_2=(p_{2, ij})$ satisfies \begin{align*}
        - B B_1^\top =A P_2+P_2 A_{11}^\top+\sum_{i=1}^m N_i P_2 N_{i, 11}^\top+ \sum_{i=1}^n\sum_{j=1}^r H_{i}P_2 H_{j, 11}^\top p_{2, ij}.
     \end{align*}
Evaluating the first $r$ rows of this equation yields
\begin{align*}
        &- B_1 B_1^\top =\begin{bmatrix}A_{11} & A_{12}\end{bmatrix} P_2+P_{2, 1} A_{11}^\top+\sum_{i=1}^m \begin{bmatrix}N_{i, 11} & N_{i, 12}\end{bmatrix}  P_2 N_{i, 11}^\top+ \sum_{i=1}^n\sum_{j=1}^r \begin{bmatrix}H_{i, 11} & H_{i, 12}\end{bmatrix} P_2 H_{j, 11}^\top p_{2, ij}\\
&= \begin{bmatrix}A_{11} & A_{12}\end{bmatrix} P_2+P_{2, 1} A_{11}^\top+\sum_{i=1}^m \begin{bmatrix}N_{i, 11} & N_{i, 12}\end{bmatrix}  P_2 N_{i, 11}^\top+ \sum_{i, j=1}^n \begin{bmatrix}H_{i, 11} & H_{i, 12}\end{bmatrix}  \begin{bmatrix}P_2 & 0 \end{bmatrix} \begin{bmatrix}H_{j, 11}^\top\\H_{j, 12}^\top \end{bmatrix} \hat p_{2, ij}
     \end{align*}
using the partition $P_2=\begin{bmatrix}P_{2, 1} \\ P_{2, 2}\end{bmatrix}$ and $\begin{bmatrix}P_2 & 0 \end{bmatrix}=:\hat P_2=(\hat p_{2, ij})$. Inserting this into \eqref{sig1_expr}, we find that \begin{align*}                                                                                                -\trace(C P_{2} C_1^\top)
= &-\trace\bigg(\Sigma_1\bigg[ B_1 B_1^\top+  \sum_{i, j=1}^n \begin{bmatrix}H_{i, 11} & H_{i, 12}\end{bmatrix}  \begin{bmatrix}P_2 & 0 \end{bmatrix} \begin{bmatrix}H_{j, 11}^\top\\H_{j, 12}^\top \end{bmatrix} \hat p_{2, ij}\bigg]\bigg)\\
&+ \trace\bigg(\Sigma_1\sum_{j=1}^n \begin{bmatrix}H_{j, 11} & H_{j, 12}\end{bmatrix}\begin{bmatrix} P_2 & 0\end{bmatrix}\begin{bmatrix} H_{j, 11}^\top\\ H_{j, 12}^\top\end{bmatrix}\sigma_j\bigg)\\
& + \trace\bigg(\Sigma_2 \bigg[P_{2, 2} A_{21}^\top +\sum_{i=1}^m \begin{bmatrix}N_{i, 21} & N_{i, 22}\end{bmatrix}P_2 N_{i, 21}^\top \bigg]\bigg) \\
& +\trace\bigg(\Sigma_2 \sum_{j=1}^n \begin{bmatrix}H_{j, 21} & H_{j, 22}\end{bmatrix}P_2 H_{j, 21}^\top \sigma_j\bigg).
\end{align*}
This identity yields
\begin{equation}\label{second_last_step}
 \begin{aligned}
 &\trace(B^\top \Sigma B)+\trace(B_1^\top \hat Q  B_1)-2\trace(C P_{2} C_1^\top)\\
 &= \trace(B_1^\top \Sigma_1 B_1)+\trace(B_2^\top \Sigma_2 B_2)+\trace(B_1^\top \hat Q  B_1)-2\trace(C P_{2} C_1^\top)\\
 &=\trace(B_1^\top (\hat Q-\Sigma_1)  B_1) + \trace\bigg(\Sigma_2 \bigg[B_2 B_2^\top + 2 P_{2, 2} A_{21}^\top + 2\sum_{i=1}^m \begin{bmatrix}N_{i, 21} & N_{i, 22}\end{bmatrix}P_2 N_{i, 21}^\top \bigg]\bigg)
 \\
&\quad+2\trace\bigg(\Sigma_2 \sum_{j=1}^n \begin{bmatrix}H_{j, 21} & H_{j, 22}\end{bmatrix}P_2 H_{j, 21}^\top \sigma_j\bigg)\\
 &\quad+2\trace\bigg(\Sigma_1 H_1\Big(\Sigma\otimes\begin{bmatrix}P_2 & 0 \end{bmatrix}\Big)H_1^\top\bigg)-2 \trace\bigg(\Sigma_1  H_1\big(\begin{bmatrix}P_2 & 0 \end{bmatrix}\otimes\begin{bmatrix}P_2 & 0 \end{bmatrix}\big)H_1^\top\bigg)
                      \end{aligned}
                      \end{equation}
exploiting that \begin{align*}H_1\big(\Sigma\otimes\begin{bmatrix}P_2 & 0 \end{bmatrix}\big)H_1^\top&=\sum_{j=1}^n \begin{bmatrix}H_{j, 11} & H_{j, 12}\end{bmatrix}\begin{bmatrix} P_2 & 0\end{bmatrix}\begin{bmatrix} H_{j, 11}^\top\\ H_{j, 12}^\top\end{bmatrix}\sigma_j,\\ H_1\big(\begin{bmatrix}P_2 & 0 \end{bmatrix}\otimes\begin{bmatrix}P_2 & 0 \end{bmatrix}\big)H_1^\top&=\sum_{i, j=1}^n \begin{bmatrix}H_{i, 11} & H_{i, 12}\end{bmatrix}  \begin{bmatrix}P_2 & 0 \end{bmatrix} \begin{bmatrix}H_{j, 11}^\top\\H_{j, 12}^\top \end{bmatrix} \hat p_{2, ij}.
\end{align*}
Subtracting \eqref{left_upper_block_Q} from \eqref{eq_red_Q}, we obtain \begin{align}\nonumber
&\sum_{i=1}^m N_{i, 21}^\top \Sigma_2 N_{i, 21} +\sum_{j=1}^n H_{j, 11}^\top \Sigma_1 H_{j, 11}\sigma_j+H_{j, 21}^\top \Sigma_2H_{j, 21} \sigma_j\\ \label{dif_q_sig}
&=A_{11}^\top(\hat Q-\Sigma_{1})+(\hat Q-\Sigma_{1}) A_{11}+\sum_{i=1}^m N_{i, 11}^\top(\hat Q-\Sigma_{1}) N_{i, 11}+\sum_{i, j=1}^r H_{i, 11}^\top \hat Q H_{j, 11} {\hat p}_{ij}.
                  \end{align}
Equations \eqref{eq_red_P} and \eqref{dif_q_sig} lead to \begin{align*}
&\trace(B_1^\top (\hat Q-\Sigma_1)  B_1) = \trace((\hat Q-\Sigma_1)  B_1B_1^\top) \\
&=-\trace\bigg((\hat Q-\Sigma_1) \bigg[ A_{11}\hat P+\hat P A_{11}^\top+\sum_{i=1}^m N_{i, 11}\hat P N_{i, 11}^\top+ \sum_{i, j=1}^r H_{i, 11}\hat P H_{j, 11}^\top {\hat p}_{ij} \bigg]\bigg)\\
&=-\trace\bigg(\hat P\bigg[(\hat Q-\Sigma_1) A_{11}+ A_{11}^\top (\hat Q-\Sigma_1)+\sum_{i=1}^m N_{i, 11}^\top (\hat Q-\Sigma_1)N_{i, 11}+ \sum_{i, j=1}^r H_{i, 11}^\top \hat Q H_{i, 11}{\hat p}_{ij} \bigg]\bigg)\\
&\quad+ \trace\bigg(\Sigma_1 \sum_{i, j=1}^r H_{i, 11}\hat P H_{j, 11}^\top {\hat p}_{ij}\bigg)\\
&=-\trace\bigg(\Sigma_2\sum_{i=1}^m  N_{i, 21}\hat P N_{i, 21}^\top \bigg)-\trace\bigg(\Sigma_2\sum_{j=1}^n  H_{j, 21}\hat P H_{j, 21}^\top \sigma_j \bigg)\\
&\quad+ \trace\bigg(\Sigma_1 \sum_{i, j=1}^r H_{i, 11}\hat P H_{j, 11}^\top {\hat p}_{ij}\bigg)-\trace\bigg(\Sigma_1 \sum_{j=1}^r H_{j, 11}\hat P H_{j, 11}^\top \sigma_j \bigg).
\end{align*}
Inserting this into \eqref{second_last_step} gives us
\begin{align*}
 &\trace(B^\top \Sigma B)+\trace(B_1^\top \hat Q  B_1)-2\trace(C P_{2} C_1^\top)\\
 &= \trace\bigg(\Sigma_2 \bigg[B_2 B_2^\top + 2 P_{2, 2} A_{21}^\top + 2\sum_{i=1}^m \begin{bmatrix}N_{i, 21} & N_{i, 22}\end{bmatrix}P_2 N_{i, 21}^\top - \sum_{i=1}^m  N_{i, 21}\hat P N_{i, 21}^\top \bigg]\bigg)
 \\
&\quad+\trace\bigg(\Sigma_2 \bigg[\sum_{j=1}^n 2 \begin{bmatrix}H_{j, 21} & H_{j, 22}\end{bmatrix}P_2 H_{j, 21}^\top \sigma_j-\sum_{j=1}^n H_{j, 21}\hat P H_{j, 21}^\top \sigma_j\bigg]\bigg)\\
 &\quad+2\trace\bigg(\Sigma_1 H_1\Big((\Sigma-\begin{bmatrix}P_2 & 0 \end{bmatrix})\otimes\begin{bmatrix}P_2 & 0 \end{bmatrix}\Big)H_1^\top\bigg) + \trace\bigg(\Sigma_1 H_{11}\Big((\hat P-\Sigma_1)\otimes \hat P\Big) H_{11}^\top\bigg)
\end{align*}
}
using that $ \sum_{i, j=1}^r H_{i, 11}\hat P H_{j, 11}^\top {\hat p}_{ij}=H_{11}\Big(\hat P\otimes \hat P\Big) H_{11}^\top$ and $\sum_{j=1}^r H_{j, 11}\hat P H_{j, 11}^\top \sigma_j=H_{11}\Big(\Sigma_1\otimes \hat P\Big) H_{11}^\top$.
\end{proof}
The error bound in Theorem \ref{thm_error_bound_bt} consists of terms with an explicit dependence on $\Sigma_2$, see \eqref{error_term1}, and terms whose dependence on $\Sigma_2$ is implicit and whose behavior for small $\Sigma_2$ is not immediately evident, see \eqref{error_term2}. As noted in Remark \ref{reamrk_eb_bt}, the error bound for quadratic bilinear systems appears to be less informative than in the bilinear case ($H=0$). Nevertheless, one important observation is that \eqref{error_term1} contains error terms that depend on products of large and small Hankel singular values. This suggests that the smallness of $\sigma_{r+1}, \dots, \sigma_n$ alone may not be sufficient to guarantee a small error bound. Instead, one may additionally require products such as $\sigma_1 \sigma_{r+1}$ to be small.

\section*{Acknowledgments}
  MR is supported by the DFG via the SFB 1294 ``Data Assimilation''-- subproject A07 ``Data-based model order reduction for stochastic dynamics'' -- Project-ID 318763901.

\bibliographystyle{abbrv}
%\bibliography{ref_quadratic_bilinear_mor}

\end{document}